\documentclass[opre,nonblindrev]{oo}

\OneAndAHalfSpacedXI 


\usepackage{endnotes}
\let\footnote=\endnote

%

\usepackage{amsmath}
\usepackage{amssymb}
\usepackage{url}
\usepackage[colorlinks = true,
            linkcolor = blue,
            urlcolor  = blue,
            citecolor = blue,
            anchorcolor = blue]{hyperref}
\usepackage{enumitem}
\usepackage[utf8]{inputenc}
\usepackage[english]{babel}
\usepackage[table,xcdraw]{xcolor}
\usepackage{booktabs}
\usepackage{comment}
\usepackage{ushort}
\usepackage{subcaption}


\newcommand{\bsubeq}{\begin{subequations}}
\newcommand{\esubeq}{\end{subequations}}
\newcommand{\BI}{\begin{itemize}}
\newcommand{\EI}{\end{itemize}}

\newcommand{\cred}{\color{red!85!black}}

\renewcommand{\mc}{\mathcal}
\newcommand{\mb}{\mathbb}

\def\st{{\rm s.t.}}

\usepackage{float}
\usepackage[ruled,vlined]{algorithm2e}
\usepackage{amssymb}
\usepackage{enumerate}
\usepackage[normalem]{ulem}

\usepackage{placeins} 






\newcommand{\T}{\mc{T}}

\newcommand{\G}{\mc{G}}

\newcommand{\Gt}{\mc{G}^{\rm{Ther}}}
\newcommand{\Gr}{\mc{G}^{\rm{Renew}}}
\newcommand{\ghat}{\hat{g}}
\newcommand{\gtilde}{\tilde{g}}
\newcommand{\gdot}{\dot{g}}
\newcommand{\gddot}{\ddot{g}}
\newcommand{\Ghat}{\hat{\mc{G}}}
\newcommand{\B}{B}
\renewcommand{\H}{H}
\newcommand{\Qsold}{Q^{\rm{Sold}}}
\newcommand{\qsoldp}{Q^{\rm{Sold}''}}
\newcommand{\wlb}{\ushort{W}_1}
\newcommand{\wub}{\bar{W}_1}
\newcommand{\Ccone}{C^{\rm{CONE}}}
\newcommand{\cvoll}{C^{\rm{VOLL}}}
\newcommand{\ptransu}{\bar{P}^{\rm{Trans}}_{ij}}
\newcommand{\ptransl}{\underline{P}^{\rm{Trans}}_{ij}}
\newcommand{\punmet}{p^{\rm{Unmet}}_{it}}
\newcommand{\punmetstar}{p^{\rm{Unmet}*}_{it}}
\newcommand{\Qcap}{Q^{\rm{CAP}}}
\newcommand{\Gset}{\mb{G}}
\newcommand{\lambdasit}{\lambda^*_{s,i(1),t}}
\newcommand{\lambdasitp}{\lambda^{*'}_{s,i(1),t}}
\newcommand{\vx}{\mathbf{x}}

\newcommand{\That}{\hat{\mc{T}}}
\newcommand{\Thatp}{\hat{\mc{T}}^{'}}

\usepackage{longtable}
\newcommand{\sm}{\setminus}
\newcommand{\gain}{Gain}
\newcommand{\loss}{Loss}
\newcommand{\lossbar}{\overline{Loss}}

\usepackage{mathtools}

\usepackage{multirow}
\usepackage{graphicx}
\usepackage{caption}
\usepackage{breqn}
\allowdisplaybreaks
\usepackage[super]{nth}

\usepackage{tikz}
\usepackage{pgfplots}
\pgfplotsset{compat=1.14}
\usetikzlibrary{shapes,arrows.meta, arrows,quotes,positioning,tikzmark,fit,backgrounds}
\tikzstyle{block} = [rectangle, draw, fill=gray!20, text width=5em, text centered, rounded corners, minimum height=4em]
\tikzstyle{block2} = [rectangle, draw, fill=gray!20, text width = 17em, text centered, rounded corners, minimum height=1em]
\tikzstyle{box} = [rectangle,text width=5em,text centered]
\tikzstyle{line} = [draw, -Latex]

\usepackage{natbib}
 \bibpunct[, ]{(}{)}{,}{a}{}{,}%
 \def\bibsep{\smallskipamount}%
 %
 %
 %

\TheoremsNumberedThrough     
\ECRepeatTheorems

\EquationsNumberedThrough    


\usepackage{placeins}
\renewcommand{\subparagraph}{}
\usepackage{titlesec}
\titlespacing*{\section}{0pt}{5pt}{3pt}
\titlespacing*{\subsection}{0pt}{3pt}{0pt}
\titlespacing*{\subsubsection}{0pt}{3pt}{0pt}
\setlength{\parskip}{0cm}

\setlength{\belowdisplayskip}{2pt}
\setlength{\belowdisplayshortskip}{2pt}
\setlength{\abovedisplayskip}{2pt}
\setlength{\abovedisplayshortskip}{2pt}
\setlength{\belowcaptionskip}{4pt}
\setlength{\abovecaptionskip}{2pt}
\setlength{\textfloatsep}{6pt}
\setlength{\floatsep}{5pt}
\setlength{\itemsep}{3pt}



\begin{document}


\RUNAUTHOR{Guo, Kroer, Dvorkin, Bienstock}

\RUNTITLE{Incentivize Investment and Reliability: A Study on Capacity Markets}

\TITLE{Incentivizing Investment and Reliability: A Study on Electricity Capacity Markets}

\ARTICLEAUTHORS{%
\AUTHOR{Cheng Guo}
\AFF{School of Mathematical and Statistical Sciences, Clemson University, Clemson, SC 29634, \EMAIL{cguo2@clemson.edu}} 
\AUTHOR{Christian Kroer}
\AFF{Department of Industrial Engineering and Operations Research, Columbia University, New York, NY 10027, \EMAIL{christian.kroer@columbia.edu}}
\AUTHOR{Yury Dvorkin}
\AFF{Department of Civil and System Engineering, Johns Hopkins University, Baltimore, MD 21218, \EMAIL{ydvorki1@jhu.edu}}
\AUTHOR{Daniel Bienstock}
\AFF{Department of Industrial Engineering and Operations Research, Columbia University, New York, NY 10027, \EMAIL{dano@columbia.edu}}
} 

\ABSTRACT{The capacity market, a marketplace to exchange available generation capacity for electricity production, provides a major revenue stream for generators and is adopted in several U.S. regions. A subject of ongoing debate, the capacity market is viewed by its proponents as a crucial mechanism to ensure system reliability, while critics highlight its drawbacks such as market distortion. Under a novel analytical framework, we rigorously evaluate the impact of the capacity market on generators' revenue and system reliability. More specifically, based on market designs at New York Independent System Operator (NYISO), we propose market equilibrium-based models to capture salient aspects of the capacity market and its interaction with the energy market. We also develop a leader-follower model to study market power. We show that the capacity market incentivizes the investment of generators with lower net cost of new entry. It also facilitates reliability by preventing significant physical withholding when the demand is relatively high. Nevertheless, the capacity market may not provide enough revenue for peaking plants. Moreover, it is susceptible to market power, which necessitates tailored market power mitigation measures depending on market dynamics. We provide further insights via large-scale experiments on data from NYISO markets. 

}

\SUBJECTCLASS{Energy policies; reliability; bidding/auctions.}

\AREAOFREVIEW{Environment, Energy, and Sustainability.}


\maketitle

%


\section{Introduction}
The 2021 Texas power crisis reignited debates on the energy-only market design of the Electric Reliability Council of Texas (ERCOT). ERCOT and Southwest Power Pool (SPP) are the only independent system operators (ISOs) in the US that implement energy-only markets. In all other ISOs, generators earn extra payments from some form of \emph{capacity market}, where they are paid for providing available capacity. Since generators in the energy-only market do not get revenue from the capacity market, they rely on high peak-hour prices to make adequate revenue. Those high peak-hour prices pose several challenges in the energy-only market, including price volatility and high electricity prices, such as the \$9000/MWh price during the Texas crisis. 

An important goal of both the capacity market and the energy-only market is to support system reliability via {\it resource adequacy}, i.e., maintaining a robust generation portfolio in the system. This is achieved by ensuring sufficient income for generators to cover both investment and operational costs. While the capacity market is widely adopted and is believed to offer benefits such as price stability, certain perceived drawbacks prevent its broader acceptance. Thus, there is a pressing need to evaluate the capacity market's performance, which is a topic that has not been very well studied \citep{gao_report_2017}. In this work, we focus on evaluating the capacity market's impact on generators' revenue and system reliability. 

Formally, the capacity market is a forward market that trades available electricity generation capacity at a future time. Before the introduction of capacity markets, existing market mechanisms failed to incentivize adequate investment into new generation capacity, which in turn undermines system reliability. Payments from the capacity market can be seen as extra revenue streams to generators, complementary to the revenue from selling electricity in the {\it energy market}, which is the market for trading electricity. \cite{cramton2013capacity} show that the capacity market ensures resource adequacy, reduces physical withholding, and decreases price volatility. \cite{nrel_report_2016} point out that as growing levels of renewable energy suppresses electricity prices, the capacity market becomes an increasingly important source of income for generators. On the other hand, \cite{oren2000capacity} mentions that the capacity market could distort energy prices and result in over-investment, and \cite{mays2019asymmetric} find that the current design of the capacity market may favor high-carbon resources. In addition, \cite{nyiso_mitigation_2019} shows that in NYISO suppliers in the capacity market may submit uneconomic bids that suppress the market clearing prices. In this work, we assess the capacity market under a rigorous analytical framework, featuring novel market equilibrium-based models based on NYISO practice, as well as a new perspective that considers the interplay between the capacity market and the energy market. Our analysis on the capacity market focuses on its impacts on generators' revenue and the subsequent implications for market outcomes, and we strive to provide rich insights for both regulators and market participants.

More broadly, studying the capacity market could shed light on an important question in mechanism design: how to provide incentives for investment. When selling electricity in the energy market, generators set bidding prices based on marginal costs, while as pointed out by \cite{budde1997capacity}, such bidding prices may not provide incentive for investment. Recently, \cite{akbarpour2022algorithmic} distinguish between investment and allocative guarantees of a mechanism, outlining conditions under which those guarantees coincide. In this regard, the joint capacity and energy markets, prevalent in most ISOs in the U.S., can be viewed as a mechanism designed aiming to achieve this objective: supporting optimal decisions in both investment and allocation. By examining the interplay between the capacity and energy markets and its impact on generators' revenue, we offer a deeper understanding into the implications of this real-life market design.

\subsection{Main Contributions}
The capacity market is an important component of power system operations, yet its performance is not very well studied \citep{gao_report_2017}, nor is its effectiveness in ensuring resource adequacy fully assessed \citep{cap_report_2018}. We propose tractable equilibrium-based and game-theoretic models rooted in real-life practice, both analytically and numerically examine the outcome of the capacity market and its implications on generators' revenue and system reliability.

 It is challenging to conduct economic analysis on large-scale capacity and energy markets, given the complicating physical constraints and market features. One contribution of our work is to develop a novel analytical framework, which enables economic analysis on realistic models of capacity and energy markets. To date, the majority of large-scale studies on the capacity market are based on computational simulations, while analytical studies employing stylized models often make simplifying assumptions regarding market features, such as the market size and the clearing process. Our method enables a more accurate and detailed assessment of the capacity market, offering nuanced discussions and novel insights. In addition, we are able to connect our theoretical results with several observations in real-life practice and in our case study.  

Furthermore, our work has several innovations in terms of modeling and computation. Firstly, we propose a novel model which captures salient aspects of the NYISO capacity market auction. We model the market clearing process with a quadratic convex (QC) optimization problem, which is tractable and can potentially be used for other uniform-price auctions with a similar structure. Secondly, thanks to the convexity of the QC model, we build a trilevel model for a leader-follower game in joint capacity and energy markets. To speed up the solution of the trilevel model, we propose a valid inequality and reformulation techniques by leveraging our insights into the capacity market, which make it possible to solve a large-scale NYISO-based case study. Finally, the detailed modeling of the capacity market enables us to assume a new perspective in this study, where we consider the interplay between the capacity and energy markets, as well as its impact on generators' revenue, which leads to novel insights.

In terms of our findings, we identify a key factor that impacts the effectiveness of the capacity market in ensuring resource adequacy: the net cost of new entry (CONE) of generators. The net CONE is the magnitude of generators' deficit after they receive payments in the energy market. We find that the capacity market is positioned to ensure profitability of generators with lower net CONE, but may not sufficiently reimburse generators with high net CONE. Also, generators with high net CONE are more likely to benefit from exercising market power in the capacity market.

Another factor that influences the performance of the capacity market is the demand level. The capacity market enjoys better properties at higher demand levels, as it makes more generators profitable, is less likely to be affected by market power, and is more effective at reducing physical withholding in the energy market. On the other hand, when demand is low or if the market is sparse, market power could become a more prominent problem. We suggest different market mitigation measures based on the demand level, demand elasticity, and market density.

Our analysis also deepens our understanding on how the capacity market impacts system reliability. The capacity market enhances reliability by maintaining generators with lower net CONE within the system, by stabilizing electricity price, and by preventing substantial physical withholding in a system with limited backup capacity. However, the capacity market may not incentivize investment in peaking plants, which are essential for system reliability during periods of peak demand. In addition, its efficacy in mitigating physical withholding diminishes in the presence of congestion or load shedding. We conclude that the capacity market can be a valuable instrument in improving system reliability, while additional measures to incentivize peaking plants investment and to alleviate congestion and load shedding issues would further contribute to this objective.

\subsection{Literature Review}\label{ch: lit_review}


There have been a variety of optimization-based methods for modeling and studying capacity payments. For example, \cite{ehrenmann2011generation} use the duals of investment constraints to represent capacity payments, which differs from the current practice where the payments are based on revenue deficit. \cite{levin2015electricity} incorporate capacity payments in their model as fixed values. \cite{mays2019asymmetric} model the capacity payments with financial products such as call options and futures. \cite{kwon2018stochastic} propose an optimization model for capacity market clearing. 
While our modeling setup is also based on market clearing, we strive to provide a detailed model for the spot market auction in the NYISO capacity market, including features such as a linear demand curve and net CONE-based offer prices, which enables more accurate and thorough evaluation of the capacity market. In addition, the majority of existing literature focuses on numerical case studies, while we undertake rigorous analytical study to provide novel insights. 

An important objective of the capacity market is to ensure sufficient income for generators, yet the literature provides divided conclusions
on whether the capacity market is necessary for this purpose. \cite{oren2000capacity} argues that if spot prices in the energy market correctly reflect scarcity rents, they will be enough to provide sufficient income for generators and thus capacity payments are not needed. In addition, even if spot prices are not able to incentivize adequate investment, there still exist more preferable methods than administratively set capacity payments. On the other hand, \cite{joskow2008capacity} uses a two-state model to show that the capacity market helps to resolve the ``missing money" problem and thus supports generator profitability. A case study on PJM \citep{sioshansi2013evolution} finds that capacity market revenue in PJM provides enough incentive for both coal and natural gas generators to remain in the market, but does not provide the incentive for new coal generators to enter. In this study, utilizing our proposed models for analysis, we provide fresh insights, such as how net CONE and demand levels affect the revenue and behavior of generators.

While there is a rich literature studying market power in the energy market \citep{li2011modeling}, market power in the capacity market is relatively not well-studied. \cite{schwenen2015strategic} investigates the strategic bidding behavior in the NYISO capacity market. A model of multi-unit uniform price auctions is used to explain observed bidding behavior in NYISO. Their work focuses on a setup where all generators collude to maximize their profit. In contrast, our work uses a leader-follower game framework and considers additional market features.


The rest of this paper is structured as follows. Section \ref{ch: model} describes modeling setup and proves some properties of the capacity market. Section \ref{ch: combined_market} explores the interplay between capacity and energy markets, providing further insights on the influence of the capacity market. Section \ref{ch: stackelberg} studies market outcomes in the presence of market power. Section \ref{ch: experiment} provides an NYISO-based case study. 

All proofs are provided in Section \ref{ec: proofs} of the e-companion.
\section{Models and Properties for the Capacity Market}\label{ch: model}
In the capacity market, load-serving entities (i.e., the buyers), known as LSEs, procure capacity to meet their capacity requirements, while the generators (i.e., the sellers), which sell their capacity, are required to make the sold capacity is available in the energy market. For this work, we design our model based on the implementation at NYISO \citep{nyiso_icap_2021}. In NYISO, there are 3 types of capacity auctions: 6-month auction, monthly auction, and spot market auction. To streamline our models and focus on salient aspects of the capacity market, we only consider the spot market auction, which is held 4-5 days prior to the start of the upcoming month, and assume that all capacity is traded in this auction. For the remainder of the paper, we use the terms ``spot market" and ``capacity market" interchangeably. 

In this section and in Section \ref{ch: combined_market}, we consider market participants as price takers without market power, with perfect information, and without externality. Thus, the capacity market satisfies the conditions of perfect competition. Later, in Section \ref{ch: stackelberg} we consider the impact of market power.

\subsection{Market Clearing in Capacity Market Auction}\label{ch: cm_background}
In this section, we describe the NYISO spot market auction and its market clearing process in detail. We also present some findings and discussions regarding the market outcome of the spot market auction.

Before delving into the details of the market setup, we introduce a key concept in the capacity market: the net CONE (depending on its unit, net CONE can also be called ``annual reference value" or ``reference point price"). The net CONE is defined as (the positive portion of) the {\it total investment and operational costs} minus the {\it energy and ancillary service (E\&AS) revenues}. More specifically, the total investment and operational costs include the investment cost, fixed operations and management (O\&M) expenditures, and the variable costs of generation; the E\&AS revenues include revenue from the energy market and the ancillary services market. To streamline our model, we do not consider the ancillary services market or the fixed O\&M expenditures. Consequently, the net CONE can be calculated as follows:
\begin{align}\label{eq: def_cone}
\text{net CONE = (investment cost - energy market profit)$^+$,}
\end{align}
where the energy market profit equals energy market revenue minus variable costs of generation, and $(\cdot)^+$ represents $\max(0, \cdot)$. For this work, all values in \eqref{eq: def_cone} are in units of \$/MW-day.

We denote the net CONE of generator $g$ as $W_g$. For a peaking plant, its net CONE is also denoted as $\Ccone$. Note that a peaking plant, also called a peaker, is a power plant that usually only operates when there is a high demand. It is often a power plant that can rapidly start up and it usually has the highest variable costs.  

For the demand side of the spot market auction, the NYISO submits monthly bids on behalf of all LSEs in the form of a demand curve, as demonstrated by the pink curve in Figure \ref{fig: origin_market_eq}. This demand curve is piecewise-linear, with a flat part and a downward linear part. It is decided by the following 3 points: 

- $(0, P_1)$ is called the maximum clearing point, where $P_1$ is the maximum market clearing price and it equals 1.5 times the levelized cost (i.e., average lifetime cost per unit) of a peaker. A simple levelized cost formula is provided in \cite{lcoe}. In our model, the lifetime cost equals the total investment and variables costs. 

- $(\Qcap, \Ccone)$ is called the reference point, where $\Qcap$ is the total capacity requirement of all LSEs. Let $D^{Peak}$ be the peak load (i.e., highest demand) of the time horizon under consideration, $F^{TF}$ be the translation factor, and $\Gamma$ be the installed reserve margin. Then $\Qcap = (1 - F^{TF})(1+\Gamma)D^{Peak}$. To understand the formulation for $\Qcap$, note that $(1+\Gamma)D^{Peak}$ is the installed capacity (ICAP) requirement, which is set at a value that is higher than the peak load. This value is then multiplied by $1 - F^{TF}$ to obtain the unforced capacity requirement, which is the capacity that generators should relialy supply after accounting for the probability of them being unavailable. We use $\Gamma = 20.70\%$ and $F^{TF} = 8.56\%$ in the case study.

- $(Q_3, 0)$ is called the zero-crossing point, where $Q_3$ is the minimum quantity beyond which additional capacity has 0 value. This quantity is higher than $\Qcap$ by a percentage $F^E$, i.e. $Q_3 = (1 + F^E) \Qcap$. We use $F^E = 18\%$ in the case study.

To draw the demand curve, we draw a line through $(\Qcap, \Ccone)$ and $(Q_3, 0)$, and this line intersects with the horizontal line that extends from $(0, P_1)$. Let the expression for the curve through $(\Qcap, \Ccone)$ and $(Q_3, 0)$ be $P = - AQ + \Pi^\max$, then we have $A = \frac{\Ccone}{F^E \Qcap}$ and $\Pi^\max = \frac{1+ F^E}{F^E}\Ccone$.

\begin{figure}[htbp]
\centering
\begin{subfigure}[t]{0.5\textwidth}
\centering
\begin{tikzpicture}[scale=0.55]
    \begin{axis}[
        axis lines=left,
        xlabel={$Q$},
        ylabel={$P$},
        xmin=0, xmax=10,
        ymin=0, ymax=10.7,
        xtick=\empty,
        ytick=\empty,
        xticklabels={}, yticklabels={},
        minor tick num=1,
        extra x ticks={4.8, 5.5, 8},
        extra y ticks={3, 4, 6},
        extra x tick labels={$\Qcap~~~$, $~~~r^*$, $Q_3$},
        extra y tick labels={$\pi^*$, $\Ccone$, $P_1$},
        tick label style={font=\Large}, 
        xlabel style={at={(xticklabel* cs:1)}, anchor=west, font=\Large}, 
        ylabel style={at={(yticklabel* cs:1)}, anchor=east, rotate = 270, font=\Large}, 
        axis line style={line width=1pt},
    ]
    
    \addplot[magenta, domain=0:3, line width=1.5pt] {6};
            
    \addplot[magenta, domain=3:8, line width=1.5pt] {-6/5 *x + 48/5};


    \addplot[violet, domain=0:2, line width=1.5pt] {1};
    \addplot[violet, domain=2:4, line width=1.5pt] {2};
    \addplot[violet, domain=4:7, line width=1.5pt] {3};
    \addplot[violet, domain=7:9, line width=1.5pt] {4};

    \draw[dotted, line width=1.5pt] (4.8,0) -- (4.8,4);
    \draw[dotted, line width=1.5pt] (0,4) -- (4.8,4);
    
    \draw[dotted, line width=1.5pt] (5.5,0) -- (5.5,3);
    \draw[dotted, line width=1.5pt] (0,3) -- (5.5,3);
    \filldraw[black, line width=1.5pt] (5.5,3) circle (2pt);
    
    \end{axis}
\end{tikzpicture}
\caption{}\label{fig: origin_market_eq}
\end{subfigure}
\hfill
\begin{subfigure}[t]{0.5\textwidth}
\centering
\begin{tikzpicture}[scale=0.55]
    \begin{axis}[
        axis lines=left,
        xlabel={$Q$},
        ylabel={$P$},
        xmin=0, xmax=10,
        ymin=0, ymax=10.7,
        xtick=\empty,
        ytick=\empty,
        xticklabels={}, yticklabels={},
        minor tick num=1,
        extra x ticks={5.5, 8},
        extra y ticks={3, 9.6},
        extra x tick labels={$r^*$, $Q_3$},
        extra y tick labels={$\pi^*$, $\Pi^{\max}$},
        tick label style={font=\Large}, 
        xlabel style={at={(xticklabel* cs:1)}, anchor=west, font=\Large}, 
        ylabel style={at={(yticklabel* cs:1)}, anchor=east, rotate = 270, font=\Large}, 
        axis line style={line width=1pt},
    ]
    
            
    \addplot[magenta, domain=0:8, line width=1.5pt] {-6/5 *x + 48/5};

    \addplot[domain=0:5.5, fill=cyan, fill opacity=0.1, draw = none] {-6/5 *x + 48/5} \closedcycle;
    \addplot[domain=0:5.5, fill=white, draw = none] {3} \closedcycle;

    \addplot[domain=0:4, fill=red, fill opacity=0.1, draw = none] {3} \closedcycle;
    \addplot[domain=0:2, fill=white, draw = none] {1} \closedcycle;
    \addplot[domain=2:4, fill=white, draw = none] {2} \closedcycle;

    \addplot[violet, domain=0:2, line width=1.5pt] {1};
    \addplot[violet, domain=2:4, line width=1.5pt] {2};
    \addplot[violet, domain=4:7, line width=1.5pt] {3};
    \addplot[violet, domain=7:9, line width=1.5pt] {4};

    
    \draw[dotted, line width=1.5pt] (5.5,0) -- (5.5,3);
    \draw[dotted, line width=1.5pt] (0,3) -- (5.5,3);
    \filldraw[black, line width=1.5pt] (5.5,3) circle (2pt);
    
    \end{axis}
\end{tikzpicture}
\caption{}\label{fig: updated_market_eq}
\end{subfigure}
\caption{\small Illustrations of market equilibrium with: (a) original demand curve, (b) linear demand curve.}
\label{fig: market_curve}
\end{figure}
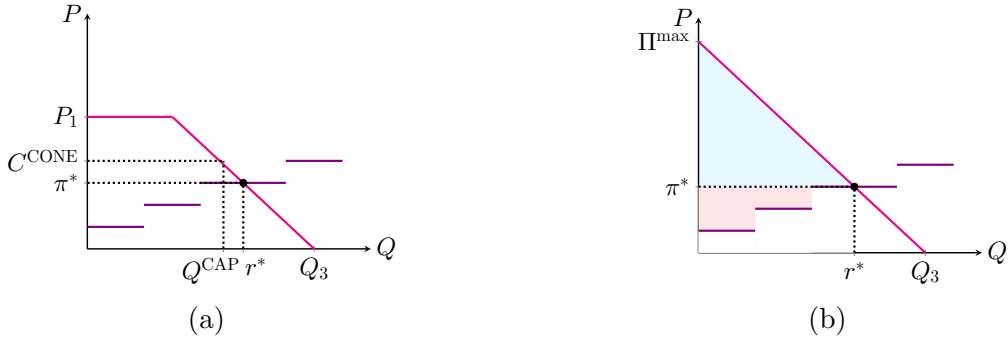



For the supply side of the spot market auction, the suppliers submit both offer price and offer capacity, and thus the supply function is a step function, as illustrated by the purple curve in Figure \ref{fig: origin_market_eq}. When a generator has no market power, it bids truthfully with an offer price equal to its net CONE $W_g$, which represents the long-run marginal cost of capacity, and an offer capacity equals its qualified capacity (which will be defined soon). The intersection of supply and demand curves in Figure \ref{fig: origin_market_eq}  represents the market equilibrium, denoted as $(r^*, \pi^*)$, where $r$ and $\pi$ are respectively variables for cleared capacity and market clearing price in the capacity market.


\begin{remark}\label{rm: cm_clearing}
One might be wondering whether the LSEs are able to procure enough capacity in the auction to meet their capacity requirement. This is equivalent to asking whether $r^* \geq \Qcap$, or equivalently, whether $\pi^* \leq \Ccone$. Those conditions are satisfied when there is enough capacity with an offer price that is no greater than $\Ccone$. Note that as shown in \cite{isone_cm} and also in our case study, the net CONE of the peaker is the highest among all generators (i.e., $\Ccone = \max_{g\in\mc{G}} W_g$). Therefore, as long as the market clears, the market clearing price $\pi^*$ cannot be more than $\Ccone$, and thus $r^*\geq \Qcap$.
\end{remark}

Remark \ref{rm: cm_clearing} indicates that the market equilibrium point cannot lie on the flat piece of the demand curve, and consequently we can replace the piecewise-linear demand curve  with a linear demand curve through $(\Qcap, \Ccone)$ and $(Q_3, 0)$. This approximation simplifies our model and does not affect the market outcome. We show the market equilibrium with an updated demand curve in Figure \ref{fig: updated_market_eq}. The {\it demand function} is now $P = - AQ + \Pi^\max, 0 \leq Q \leq Q_3$.

To describe the market clearing process, we need some additional notation. For generator $g$, let $P^\max_g$ be its capacity and $F^U_g$ be its unforced capacity percentage. Then its qualified capacity for sale equals $F^U_g P^\max_g$. We consider both thermal and wind generators. For thermal generators, $F^U_g=1$ as they provide ``firm" capacity without too much uncertainty; For wind generators, $F^U_g < 1$ as their production levels are less predictable (e.g., in our case study we use $F^U_g = 0.24$ for wind generators). To ensure that there is enough capacity to fulfill the capacity requirement, we assume $\sum_{g\in\mc{G}} F^U_g P^\max_g \geq \Qcap$, i.e. the total qualified capacity is no less than the capacity requirement. Moreover, for generator $g$ let $h_g$ be its offer capacity and $q_g$ be its capacity sold in the capacity market, and we have $0\leq q_g \leq h_g\leq F_g^U P^\max_g$. 

We can use a greedy algorithm to identify generators that clear the market. First, we sort the generators according to their offer price $W_g$ in an ascending order. When the capacity market clears, the generators with the lowest offer prices are selected. Since we have a linear demand function $P = - AQ + \Pi^\max$, if the market clears at $\pi^* = W_{\ghat}$ with $\ghat$ being the marginal supplier that clears the market, then the total cleared quantity $r^* = \frac{\Pi^\max - W_{\ghat}}{A}$. To find $\ghat$, the algorithm iteratively checks if the total offer capacity up to generator $g$ exceeds $\frac{\Pi^\max - W_g}{A}$, and stops when it finds the first generator that does not. This generator is the marginal supplier. The sold capacity $q_g$ equals $h_g$ if $W_g<W_{\ghat}$, and equals 0 if $W_g>W_{\ghat}$. For the marginal supplier, $q_{\ghat} = r^*-\sum_{g=1}^{\ghat-1} h_g$. On the other hand, if the algorithm fails to find a marginal generator, then the market does not clear. The pseudocode of this algorithm is provided in Section \ref{ch: greedy} of the e-companion. This procedure has a complexity of $O(|\mc{G}|\log(|\mc{G}|))$ due to sorting, with $\mc{G}$ being the set of generators. Note that in our analysis, we assume all net CONEs are different so there is no tie to break. This assumption is later relaxed in the case study to provide a more nuanced discussion.

With the market clearing process explained, we now formalize and advance the results of Remark \ref{rm: cm_clearing} in the following proposition, which provides an expression for the proportion of excess capacity and shows that it is nonnegative. The excess capacity is 0 only if the peaker clears the market:

\begin{proposition}\label{th: excess_cap}
If the capacity market clears, then the proportion that the market clearing quantity exceeds the capacity requirement $\frac{r^* - \Qcap}{\Qcap} = (1 - \frac{W_{\ghat}}{\Ccone})F^E \geq 0$. In particular, if the marginal supplier $\ghat$ is the peaker, then $r^* = \Qcap$.
\end{proposition}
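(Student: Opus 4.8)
The plan is to verify the closed-form expression by direct substitution and then invoke Remark~\ref{rm: cm_clearing} for the sign. The greedy clearing procedure gives the cleared quantity as $r^* = \frac{\Pi^\max - W_{\ghat}}{A}$, where $\ghat$ is the marginal supplier and $\pi^* = W_{\ghat}$. First I would substitute the stated parameter values $A = \frac{\Ccone}{F^E \Qcap}$ and $\Pi^\max = \frac{1+F^E}{F^E}\Ccone$ into this expression. After cancelling the common factor of $\Ccone$ between numerator and denominator and collecting terms, I expect $r^*$ to collapse to $\Qcap\bigl[1 + F^E\bigl(1 - \tfrac{W_{\ghat}}{\Ccone}\bigr)\bigr]$. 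Subtracting $\Qcap$ and dividing by $\Qcap$ then yields the claimed identity $\frac{r^* - \Qcap}{\Qcap} = \bigl(1 - \tfrac{W_{\ghat}}{\Ccone}\bigr)F^E$.

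For the nonnegativity, the key input is the observation recorded in Remark~\ref{rm: cm_clearing} that the peaker carries the largest net CONE, i.e. $\Ccone = \max_{g \in \mc{G}} W_g$. Since the marginal supplier $\ghat$ is itself one of the generators, this immediately gives $W_{\ghat} \le \Ccone$, hence $1 - \tfrac{W_{\ghat}}{\Ccone} \ge 0$; combined with $F^E > 0$ (it is a strictly positive percentage), the right-hand side is nonnegative.

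The final assertion is the boundary case: if $\ghat$ is the peaker then by definition $W_{\ghat} = \Ccone$, so the factor $1 - \tfrac{W_{\ghat}}{\Ccone}$ vanishes and the proportion equals $0$, giving $r^* = \Qcap$ exactly.

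I do not anticipate a genuine analytical obstacle here — the argument is essentially a one-line algebraic simplification together with an inequality that Remark~\ref{rm: cm_clearing} already supplies. The only points meriting care are the bookkeeping in the substitution (ensuring the $\Ccone$ and $F^E$ factors cancel correctly so that the $\Qcap$ prefactor emerges cleanly) and confirming that the clearing hypothesis legitimately places the equilibrium at a marginal supplier with $\pi^* = W_{\ghat} \le \Ccone$ on the linear portion of the demand function, rather than on the flat piece of the original curve, so that the formula $r^* = \frac{\Pi^\max - W_{\ghat}}{A}$ is the correct one to evaluate.
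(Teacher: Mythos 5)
Your proposal is correct and follows essentially the same route as the paper's proof: both substitute $A = \frac{\Ccone}{F^E \Qcap}$ and $\Pi^\max = \frac{1+F^E}{F^E}\Ccone$ into $r^* = \frac{\Pi^\max - W_{\ghat}}{A}$, invoke $\Ccone = \max_{g\in\mc{G}} W_g$ (from Remark~\ref{rm: cm_clearing}) for nonnegativity, and note $W_{\ghat} = \Ccone$ in the peaker case. The only cosmetic difference is that you first simplify $r^*$ to $\Qcap\bigl[1 + F^E\bigl(1 - \tfrac{W_{\ghat}}{\Ccone}\bigr)\bigr]$ before forming the ratio, whereas the paper manipulates the ratio $\frac{r^*-\Qcap}{\Qcap}$ directly.
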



In practice, if the ISO procures excess capacity, then the excess capacity will be allocated to each LSE proportionally according to its load-ratio share (i.e., the LSE's portion of load compared with the total load).

Although the greedy algorithm is straightforward to implement for obtaining capacity market outcomes, its utility can be limited when used to analyze those outcomes. Therefore, we propose a novel QC optimization model for the capacity market in the following section. We additionally provide a novel mixed-integer programming (MIP) model in Section \ref{ch: mip} of the e-companion, which is inspired by the greedy algorithm and offers an alternative perspective to facilitate a deeper understanding of the market clearing process. Both of those models can be used in other similarly-structured markets with a linear demand function and a step supply function. 


\subsection{Quadratic Convex Model for Capacity Market}\label{ch: qc}
As previously mentioned, in the absence of market power, the capacity market satisfies the conditions of perfect competition. As a result, the market equilibrium point maximizes social welfare. Essentially, our QC model is designed to find the social welfare-maximizing price and quantities.

The social welfare equals the sum of producer and consumer surpluses. In Figure \ref{fig: updated_market_eq}, the producer surplus is shaded in red and equals the total profit of generators, which is expressed as $\sum_{g\in\mc{G}} (\pi - W_g) q_g$; the consumer surplus is shaded in blue and equals the area of the triangle which is below the demand curve and above the line for the market clearing price, and it can be calculated by $\frac{\Pi^\max - \pi}{2} r$. The social welfare thus equals $\sum_{g\in\mc{G}} (\pi - W_g) q_g + \frac{\Pi^\max - \pi}{2} r$. 

This expression for social welfare is nonconvex because of the bilinear terms $\pi q_g$ and $\pi r$. To use it in our QC model, we need to transform the expression to a convex quadratic form. To do this, we substitute $\pi$ using the demand function $\pi = - A \sum_{g\in\mc{G}} q_g + \Pi^\max$, and substitute $\sum_{g\in\mc{G}} q_g$ using the relationship $\sum_{g\in\mc{G}} q_g=r$. The updated expression for social welfare is $- \frac{A}{2} r^2 + \sum_{g\in\mc{G}} (\Pi^\max - W_g) q_g$.

We propose the following QC model for the capacity market:
\begin{subequations}\label{eq: cm_qc}
\begin{alignat}{4}
\text{(CM-QC):}\max~~& - \frac{A}{2} r^2 + \sum_{g\in\mc{G}} (\Pi^\max - W_g) q_g\label{eq: cm_qc_0}\\
\st~~& r = \sum_{g\in\mc{G}} q_g\label{eq: cm_qc_1}\\
& h_g \leq F^U_g P_g^\max&&  \forall g\in\mc{G}  \hspace{1cm}&&\label{eq: cm_qc_2}\\
& q_g\leq  h_g && \forall g\in\mc{G}&&\label{eq: cm_qc_3}\\
& h_g, q_g\geq 0 && \forall g\in\mc{G},\label{eq: cm_qc_4}
\end{alignat}
\end{subequations}
where the objective \eqref{eq: cm_qc_0} maximizes the social welfare. \eqref{eq: cm_qc_1} defines the cleared capacity $r$. \eqref{eq: cm_qc_2} and \eqref{eq: cm_qc_3} respectively set the upper bounds for the offer capacity and the sold capacity.

This is a convex optimization problem because we have a maximization problem and the coefficient for the quadratic term in the objective \eqref{eq: cm_qc_0} is negative. In addition, all constraints are linear. Since (CM-QC) is convex, we are able to use its KKT conditions to represent the market equilibrium of the capacity market in Section \ref{ch: stackelberg}.

With this model, we can prove that social welfare is maximized when each generator offers its entire qualified capacity into the capacity market:

\begin{proposition}\label{th: cm_h_opt}
There exists an optimal solution for the QC model \eqref{eq: cm_qc} with $h_g = F^U_g P^\max_g, \forall g\in\mc{G}$.
\end{proposition}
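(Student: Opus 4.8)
The plan is to exploit the fact that the offer-capacity variables $h_g$ do not appear in the objective \eqref{eq: cm_qc_0}; they enter the model \eqref{eq: cm_qc} only through the constraints \eqref{eq: cm_qc_2}--\eqref{eq: cm_qc_4}, where each $h_g$ serves purely as an upper bound on the sold capacity $q_g$ (via \eqref{eq: cm_qc_3}) that is itself capped by $F^U_g P^\max_g$ (via \eqref{eq: cm_qc_2}). Consequently, raising any $h_g$ to its largest admissible value $F^U_g P^\max_g$ can only relax the coupling constraint \eqref{eq: cm_qc_3} while leaving the objective untouched, so such an adjustment can never destroy optimality. This reduces the proposition to a short structural argument rather than any computation.

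Concretely, I would start from an arbitrary optimal solution $(r^*, \{q^*_g\}, \{h^*_g\})$ of \eqref{eq: cm_qc}; an optimizer exists because the feasible region is a nonempty compact polytope (the origin is feasible, and all variables are bounded above) and the objective is continuous and concave. I would then define a candidate solution that keeps $r^*$ and $\{q^*_g\}$ unchanged and replaces each $h^*_g$ by $\tilde h_g := F^U_g P^\max_g$.

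The next step is to verify feasibility of this candidate. Constraint \eqref{eq: cm_qc_1} is unaffected since neither $r$ nor the $q_g$ change; constraint \eqref{eq: cm_qc_2} holds with equality by construction; constraint \eqref{eq: cm_qc_3} holds because the original feasibility gives $q^*_g \le h^*_g \le F^U_g P^\max_g = \tilde h_g$; and the nonnegativity in \eqref{eq: cm_qc_4} holds since capacities and unforced-capacity percentages are nonnegative. Because the objective \eqref{eq: cm_qc_0} is independent of the $h_g$, the candidate attains exactly the same objective value as the original optimal solution and is therefore itself optimal, with $h_g = F^U_g P^\max_g$ for all $g \in \mc{G}$, establishing the claim.

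I do not anticipate any genuine obstacle, since the result is essentially the observation that $h_g$ is absent from the objective, so that pushing it to its bound is ``free.'' The only point worth stating carefully is precisely that absence, together with the direction of the inequalities in \eqref{eq: cm_qc_2}--\eqref{eq: cm_qc_3}. In fact the identical argument applies verbatim to \emph{any} feasible point, not just an optimal one, which yields the stronger structural statement that every feasible solution can be modified to offer full qualified capacity without changing its objective value.
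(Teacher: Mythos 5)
Your proof is correct and rests on the same key observation as the paper's: $h_g$ is absent from the objective \eqref{eq: cm_qc_0} and enters only as an upper bound on $q_g$, so raising it to $F^U_g P^\max_g$ is free. The paper phrases this as a comparison of projected feasible regions (fixing $h_g$ at its bound yields the largest projection and hence the largest attainable maximum), whereas you modify an optimal solution in place and re-verify feasibility --- a cosmetic difference, with your explicit compactness argument for the existence of an optimizer being a small point the paper leaves implicit.
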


It is possible that there are multiple optimal solutions for $h_g$. For example, when the optimal solution for $q_g$ is 0, $h_g$ can take any value in $[0, F^U_g P^\max_g]$ without affecting the optimal value. 

Although Proposition \ref{th: cm_h_opt} relies on the QC model, the conclusion remains true even if the market equilibrium is obtained with other methods, as all methods lead to the same optimal solution(s). 

\subsection{Capacity Market and Revenue Adequacy}\label{ch: cm_profit}

An important objective of the capacity market is to ensure generators make sufficient income. In this section, we analyze how the capacity market impacts the profit of generators.

The profit of a generator can be assessed by comparing its capacity market revenue with its net CONE, without considering the outcome of the energy market. This is because when calculating the net CONE, we have already deducted energy market profit from its value. Also, as we will discuss shortly in Section \ref{ch: em_model}, when there is no market power, the outcome of capacity market does not affect the revenue in energy market. Therefore, as long as the net CONE is correctly estimated, it should reflect how much additional revenue the generator needs from the capacity market to become profitable. For example, if a generator's net CONE equals 0, then it is already profitable and does not need more revenue. 


The following theorem provides expressions for the profits of different types of generators. It shows that the profitability of a generator depends on how it gets allocated in the capacity market:
\begin{theorem}\label{th: profit_cm}
Let $\Ghat$ be the set of generators that are allocated in the capacity market, and let $\ghat$ be the marginal supplier. Assume that the net CONE is correctly estimated, and $W_{\ghat} \neq 0$. Then the profit of generator $g$ is as follows:

(1) If $g\in\Ghat\sm\{\ghat\}$, then its profit equals $(W_{\ghat} - W_g)F^U_g P^\max_g > 0$.

(2) If $g$ is the marginal supplier $\ghat$, then its profit equals $W_{\ghat}(\frac{\Pi^\max - W_\ghat}{A} - \sum_{g\in\Ghat}F^U_g P^\max_g)\leq 0$

(3) If $g\notin\Ghat$, then its profit equals $-W_g F^U_g P^\max_g < 0$
\end{theorem}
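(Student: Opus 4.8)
The plan is to first pin down the single profit formula that underlies all three cases and then specialize it using the greedy clearing rule together with Proposition \ref{th: cm_h_opt}. From the discussion preceding the theorem, a generator's profit is its capacity-market revenue net of the deficit encoded in its net CONE. Since $W_g$ is a per-MW deficit that is sunk once the plant's qualified capacity $F^U_g P^{\max}_g$ is in place, and the generator is paid the uniform clearing price $\pi^* = W_{\ghat}$ on the capacity $q_g$ it sells, I would take as the common starting point
\begin{equation*}
\text{profit}(g) = \pi^* q_g - W_g F^U_g P^{\max}_g = W_{\ghat}\, q_g - W_g F^U_g P^{\max}_g .
\end{equation*}
Establishing this identity is the conceptual crux: the cost term is charged on the full qualified capacity rather than on $q_g$, which is exactly what makes an unallocated generator ($q_g = 0$) lose money and what keeps all three formulas consistent.

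With this formula in hand, each case reduces to substituting the value of $q_g$ produced by the greedy algorithm, where by Proposition \ref{th: cm_h_opt} I may take $h_g = F^U_g P^{\max}_g$ for every $g$. For an inframarginal generator $g \in \Ghat \sm \{\ghat\}$ the market clears it fully, so $q_g = h_g = F^U_g P^{\max}_g$ and the formula collapses to $(W_{\ghat} - W_g) F^U_g P^{\max}_g$; since such $g$ clears strictly before the marginal supplier, distinctness of net CONEs gives $W_g < W_{\ghat}$, hence the strict positivity in (1) (using that capacities are positive). For an unallocated generator $g \notin \Ghat$ we have $q_g = 0$, so the profit is $-W_g F^U_g P^{\max}_g$; because such $g$ is rejected, $W_g > W_{\ghat} > 0$ (invoking $W_{\ghat} \neq 0$ and nonnegativity of net CONE), yielding the strict negativity in (3).

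The marginal case (2) is where the real work lies. Here $q_{\ghat} = r^* - \sum_{g \in \Ghat \sm \{\ghat\}} F^U_g P^{\max}_g$ with $r^* = (\Pi^{\max} - W_{\ghat})/A$; substituting and absorbing the cost term $W_{\ghat} F^U_{\ghat} P^{\max}_{\ghat}$ into the sum gives $\text{profit}(\ghat) = W_{\ghat}\bigl( (\Pi^{\max} - W_{\ghat})/A - \sum_{g \in \Ghat} F^U_g P^{\max}_g \bigr)$, which matches the stated expression. To sign it I would invoke the feasibility bound $q_{\ghat} \leq h_{\ghat} = F^U_{\ghat} P^{\max}_{\ghat}$, i.e.\ the marginal supplier sells no more than its qualified capacity; this rearranges to $r^* \leq \sum_{g \in \Ghat} F^U_g P^{\max}_g$, so the parenthesized factor is nonpositive, and multiplying by $W_{\ghat} > 0$ gives $\text{profit}(\ghat) \leq 0$. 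The main obstacle is thus not algebraic but lies in correctly identifying which quantity bounds the clearing volume: recognizing that $r^*$ cannot exceed the total qualified capacity of the cleared set is precisely what forces the marginal supplier's nonpositive profit, and it is the same inequality that underpins Proposition \ref{th: excess_cap}.
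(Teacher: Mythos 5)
Your proof is correct and follows essentially the same route as the paper's: both use the profit identity (clearing price times sold capacity minus net CONE times full qualified capacity), substitute the greedy-clearing quantities with $h_g = F^U_g P^{\max}_g$ from Proposition \ref{th: cm_h_opt}, and sign each case via distinctness of net CONEs, $W_{\ghat} > 0$, and the bound $r^* \leq \sum_{g\in\Ghat} F^U_g P^{\max}_g$. No gaps; the marginal-supplier sign argument you identify as the crux is exactly the one the paper uses.
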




Therefore, the capacity market does not guarantee the profitability of all generators. This result also implies that the capacity market incentivizes the investment of technologies with smaller net CONE, as those generators tend to get allocated in the capacity market. In our case study, wind, natural gas, and hydro are 3 types of generators with the smallest net CONE. 

\begin{remark}
    Since we assume $W_\ghat \neq 0$, the marginal supplier is only profitable when $\frac{\Pi^\max - W_\ghat}{A} - \sum_{g\in\Ghat}F^U_g P^\max_g = 0 \Rightarrow W_\ghat = -A\sum_{g\in\Ghat} F^U_g P^\max_g + \Pi^\max$, and is not profitable when $W_\ghat$ is higher. In other words, the marginal supplier is only profitable when its net CONE is low enough such that it is fully allocated.
\end{remark}

Since the peaker has the highest net CONE, Theorem \ref{th: profit_cm} also indicates that the peaker is unlikely to be profitable:
\begin{corollary}\label{th: peak_rev}
The peaker is revenue balanced (i.e., its revenue equals cost) only if it is the marginal generator and $\Qcap = \sum_{g\in\mc{G}}F^U_g P^\max_g$. Otherwise, it always operates at a loss.
\end{corollary}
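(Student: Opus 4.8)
The plan is to reduce Corollary~\ref{th: peak_rev} to a direct case analysis on top of Theorem~\ref{th: profit_cm}, exploiting that the peaker carries the largest net CONE, $\Ccone = \max_{g\in\mc{G}} W_g$. Since all net CONEs are assumed distinct, the peaker is the very last generator in the ascending sort used by the greedy clearing procedure. First I would use this to rule out case~(1) of Theorem~\ref{th: profit_cm}: any generator in $\Ghat \sm \{\ghat\}$ is fully allocated and strictly precedes the marginal supplier, so it satisfies $W_g < W_{\ghat}$; the peaker, having the strict maximum net CONE, can never satisfy this. Hence the peaker can only be either unallocated or the marginal supplier itself.

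If the peaker is unallocated (case~(3)), Theorem~\ref{th: profit_cm} gives profit $-\Ccone\, F^U_g P^\max_g < 0$ (with $g$ the peaker), so it strictly loses money; this already handles the ``operates at a loss'' conclusion in that configuration. The substantive case is $\ghat = $ peaker, so $W_{\ghat} = \Ccone$. Here I would substitute $W_{\ghat} = \Ccone$ into the case~(2) profit $W_{\ghat}(\frac{\Pi^\max - W_{\ghat}}{A} - \sum_{g\in\Ghat} F^U_g P^\max_g)$ and simplify the leading term. Using $\Pi^\max = \frac{1+F^E}{F^E}\Ccone$ and $A = \frac{\Ccone}{F^E \Qcap}$ (equivalently, invoking Proposition~\ref{th: excess_cap} with $\ghat$ the peaker, which yields $r^* = \Qcap$), one obtains $\frac{\Pi^\max - \Ccone}{A} = \Qcap$, so the peaker's profit collapses to $\Ccone(\Qcap - \sum_{g\in\Ghat} F^U_g P^\max_g)$.

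The key step, and the point where the corollary's global sum $\sum_{g\in\mc{G}} F^U_g P^\max_g$ enters in place of a sum over $\Ghat$, is to observe that when the peaker is the marginal supplier, every other generator has strictly smaller net CONE and is therefore fully allocated; hence $\Ghat = \mc{G}$ and $\sum_{g\in\Ghat} F^U_g P^\max_g = \sum_{g\in\mc{G}} F^U_g P^\max_g$. Combined with the standing assumption $\sum_{g\in\mc{G}} F^U_g P^\max_g \geq \Qcap$ and $\Ccone \neq 0$, the peaker's profit $\Ccone(\Qcap - \sum_{g\in\mc{G}} F^U_g P^\max_g)$ is nonpositive, and vanishes exactly when $\Qcap = \sum_{g\in\mc{G}} F^U_g P^\max_g$. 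This simultaneously forces the marginal-supplier configuration plus the capacity equality whenever revenue is balanced, and shows a strict deficit in every other case, completing the proof.

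I expect the only genuinely non-mechanical step to be the identification $\Ghat = \mc{G}$ when the peaker clears as the marginal unit, since this is precisely what bridges the $\Ghat$-sum of Theorem~\ref{th: profit_cm} and the $\mc{G}$-sum of the corollary; the algebraic simplification $\frac{\Pi^\max - \Ccone}{A} = \Qcap$ is routine once the demand-curve parameters are substituted or Proposition~\ref{th: excess_cap} is cited directly.
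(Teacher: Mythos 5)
Your proof is correct and follows essentially the same route as the paper's: a case split on whether the peaker is the marginal supplier, the identification that when it is marginal every other generator is fully allocated (so the $\Ghat$-sum becomes the $\mc{G}$-sum) and $r^* = \Qcap$, and the observation that revenue balance forces the peaker to sell its entire qualified capacity. The only cosmetic difference is that you route the computation through the profit formulas of Theorem \ref{th: profit_cm} and Proposition \ref{th: excess_cap}, whereas the paper computes the peaker's sold capacity directly, but the underlying argument is identical.
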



In Table \ref{tab: cm_profit} we summarize how the capacity market affects the profitability of generators, where we list the conditions for generators to be profitable/not profitable, depending on whether it gets revenue from the capacity market. We highlight the profitability of the peaker, as its availability is essential for a reliable power grid. 

\begin{table}[htbp]
\centering
\caption{Effect of the Capacity Market (CM) on the Profitability of Generators}
\label{tab: cm_profit}
{\small
\begin{tabular}{l|ll}
\hline
      & \multicolumn{1}{c}{Profitable} & \multicolumn{1}{c}{Not profitable} \\ \hline
No CM & $W_g = 0$                       & $W_g > 0$                \\
      &                                & peaker                             \\
CM    & $g\in \Ghat \sm \{\ghat\}$                          & $g\notin \Ghat$                              \\
      & $\ghat$ \& $W_\ghat = -A\left(\sum_{g\in\Ghat} F^U P^\max_g\right) + \Pi^\max$                          & $\ghat$ \& $W_\ghat > -A\left(\sum_{g\in\Ghat} F^U P^\max_g\right) + \Pi^\max$                           \\
      & peaker $= \ghat$ \& $\Qcap = \sum_{g\in\G} F^U_g P^\max_g$                          & peaker $= \ghat$ \& $\Qcap < \sum_{g\in\G} F^U_g P^\max_g$; peaker $\neq \ghat$     \\ \hline
\end{tabular}}
\end{table}
\section{Joint Capacity and Energy Markets}\label{ch: combined_market}
The decisions in capacity market and energy market are interdependent. On one hand, procuring capacity in the capacity auction ensures its availability in the energy market. On the other hand, generator's profit in energy market affects its net CONE and consequently influences the outcome of the capacity market. In this section, our goal is to model and understand the interaction between capacity and energy markets, which leads to deeper insights into the impact of capacity market on generator profitability and overall market operation.




\subsection{A Modified Energy Market Model}\label{ch: em_model}
We model the energy market using a modified linearized direct-current optimal power flow (DCOPF) problem. Unlike the standard linearized DCOPF models, in our model generators do not need to provide their whole capacity in the energy market, which better reflects the bidding behaviors in capacity and energy markets.
More specifically, the allocated capacity in the capacity market $q_g$ is taken as given in the energy market and is denoted as $\bar{q}_g$. In addition to $\bar{q}_g$, a generator can provide $v_{g}$ capacity in the energy market. The energy market model is as follows:
\begin{subequations}\label{eq: em}
\begin{alignat}{4}
\text{(EM):}\min~~& \sum_{t\in\mc{T}}\left(\sum_{g\in\Gt} C^V_g p_{gt}+ \sum_{i\in\mc{N}}\cvoll\punmet\right)\label{eq: em_0}\\
\st~~&\sum_{g\in\mc{G}_i} p_{gt} + \punmet + \sum_{(j, i)\in\mc{E}}f_{jit} - \sum_{(i,j)\in\mc{E}} f_{ijt} = D_{it} &~~&\forall i\in\mc{N}, t\in\mc{T} &&\hspace{-1cm}(\lambda_{it})\label{eq: em_1}\\
& f_{ijt} = B_{ij}(\theta_{it} - \theta_{jt}) && \forall (i, j)\in\mc{E}, t\in\mc{T} && \hspace{-1cm}(\iota_{ijt})\label{eq: em_2}\\
& f_{ijt} \leq \ptransu &~~&\forall (i, j)\in\mc{E}, t\in\mc{T} &&\hspace{-1cm}(- \zeta_{ijt})\label{eq: em_3}\\
& f_{ijt} \geq \ptransl &~~&\forall (i, j)\in\mc{E}, t\in\mc{T} && \hspace{-1cm}(\eta_{ijt})\label{eq: em_4}\\
& p_{gt} \leq \bar{q}_g + v_{g} &~~&\forall g\in\Gt, t\in\mc{T} && \hspace{-1cm}(- \alpha_{gt})\label{eq: em_5}\\
& p_{gt} = F^{CF}_{gt}(\bar{q}_g + v_{g})  &~~&\forall g\in\Gr, t\in\mc{T} \hspace{1cm}&~~&\hspace{-1cm} (\rho_{gt})\label{eq: em_6}\\
& \bar{q}_g + v_{g}\leq P_g^\max &~~&\forall g\in\G &&\hspace{-1cm} (-\phi_{g})\label{eq: em_7}\\
& p_{gt} \geq 0 &~~& \forall g\in\Gt, t\in\mc{T}\label{eq: em_8}\\
& \punmet\geq 0 &&\forall i\in\mc{N}, t\in\mc{T}\label{eq: em_9}\\
& v_{g} \geq 0 &~~& \forall g\in\Gt.\label{eq: em_10}
\end{alignat}
\end{subequations}
The Greek letters at the end of each set of constraints are corresponding dual variables, with $\zeta_{ijt}, \eta_{ijt}, \alpha_{gt}, \phi_g\geq 0$ and the other variables free. The objective \eqref{eq: em_0} minimizes the total cost, which includes the variable production cost $C^V_g$ and the value of lost loads (VOLL) $\cvoll$. In addition, $\T$ is the set of time periods, $\Gt$ is the set of thermal generators, and $\mc{N}$ is the set of buses (nodes). $p_{gt}$ and $\punmet$ are respectively production level and unsatisfied load. Note that $\cvoll$ is set to be very high (e.g., \$1,000/MW in our model) to limit the chances of supply interruption. \eqref{eq: em_1} are power flow constraints where $f_{ijt}$ is the power flow, $\G_i$ is the set of generators at bus $i$, and $\mc{E}$ is the set of transmission lines. Those constraints ensure that for each hour the load $D_{it}$ at each bus is satisfied. \eqref{eq: em_2} are the Ohm's law constraints that connect flow with voltage angle $\theta_{it}$, where $B_{ij}$ denotes the susceptance. \eqref{eq: em_3} and \eqref{eq: em_4} enforce upper limit $\ptransu$ and lower limit $\ptransl$ on the transmission lines. \eqref{eq: em_5} ensure that the thermal generators do not produce more than their available capacity. \eqref{eq: em_6} set the production levels for renewable generators, where $F^{CF}_{gt}$ is the capacity factor and $\Gr$ is the set of renewable generators. \eqref{eq: em_7} impose bounds on the available capacity of generators. 

Similar to the case with the capacity market, we can prove that it is always optimal to have $v_{g} = P^\max_g - \bar{q}_g$ in the energy market, as shown in Proposition \ref{th: em_h_opt}:
\begin{proposition}\label{th: em_h_opt}
There exists an optimal solution for \eqref{eq: em} with $v_{g} = P^\max_g - \bar{q}_g, \forall g\in\G$.
\end{proposition}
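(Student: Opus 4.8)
The plan is to start from an arbitrary optimal solution $(p, f, \theta, \punmet, v)$ of (EM) and push each entry $v_g$ up to the target value $P^{\max}_g - \bar q_g$ one generator at a time, showing at each step that feasibility is preserved and the objective does not increase. The single most useful observation is that $v_g$ never appears in the objective \eqref{eq: em_0}, so the only thing to monitor is feasibility of the constraints in which $v_g$ actually occurs. Note also that $\bar q_g = q_g \le F^U_g P^{\max}_g \le P^{\max}_g$ by the capacity-market bounds, so $P^{\max}_g - \bar q_g \ge 0$; hence the target value respects the sign constraint \eqref{eq: em_10} and makes \eqref{eq: em_7} hold with equality.

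For a thermal generator $g \in \Gt$ the argument is routine and mirrors the relaxation argument behind Proposition \ref{th: cm_h_opt}. Here $v_g$ appears only in the capacity bounds \eqref{eq: em_5} and \eqref{eq: em_7}: raising $v_g$ to $P^{\max}_g - \bar q_g$ can only enlarge the right-hand side of \eqref{eq: em_5} (so that constraint stays slack-or-satisfied) and satisfies \eqref{eq: em_7} with equality, while I leave every other variable $p, f, \theta, \punmet$ untouched. Since no other constraint and no objective term references $v_g$, the perturbed point is feasible with the same objective value, so first I would dispose of the thermal generators this way.

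The hard part will be the renewable generators $g \in \Gr$, where $v_g$ is tied to the output by the equality \eqref{eq: em_6}: increasing $v_g$ forces $p_{gt}$ up by $F^{CF}_{gt}\bigl(P^{\max}_g - \bar q_g - v_g\bigr)$ in every period $t$, so I cannot simply freeze the remaining variables. The plan is to absorb this extra, zero-cost injection at the generator's bus by backing down costly thermal output there, by reducing the unmet load $\punmet$, or, failing a local outlet, by re-routing it through an adjustment of the flows $f$ and angles $\theta$ in \eqref{eq: em_1}. Because renewable energy carries no variable cost while the displaced thermal generation and unmet load both carry strictly positive coefficients ($C^V_g$ and $\cvoll$), any such rebalancing leaves the objective unchanged or strictly lower, so the objective side is never the issue. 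The genuine obstacle is the \emph{feasibility} of the rebalancing: one must guarantee that the forced surplus can be routed within the transmission limits \eqref{eq: em_3}--\eqref{eq: em_4} without producing overgeneration at an isolated or congested bus. I would discharge this either by exhibiting an explicit flow redistribution or, more cleanly, by invoking the model's standing regime in which demand is large enough that available capacity is absorbed, which is precisely the setting analyzed throughout the paper; this is the step I expect to require the most care.
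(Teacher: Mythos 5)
Your handling of the thermal generators is complete and is exactly the paper's argument: $v_g$ never appears in the objective \eqref{eq: em_0}, and for $g\in\Gt$ it appears only on the relaxing side of \eqref{eq: em_5} and in \eqref{eq: em_7}, so raising it to $P^\max_g-\bar q_g$ preserves feasibility and the objective value. The renewable case, however, you explicitly leave open, and that is a genuine gap: you neither exhibit the flow redistribution nor state and verify the ``demand is large enough'' hypothesis you appeal to, so the proposal does not prove the proposition for $g\in\Gr$. Moreover, the obstruction you flag is real, not a technicality. Because \eqref{eq: em_6} is an equality and \eqref{eq: em_1} with $\punmet\geq 0$ leaves no room for overgeneration, increasing $v_g$ for a renewable generator can destroy feasibility outright: take a single bus with one renewable generator, no lines, $F^{CF}_{gt}=1$, $\bar q_g=0$, $P^\max_g=100$, $D_{it}=10$. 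Every feasible point has $F^{CF}_{gt}(\bar q_g+v_g)=p_{gt}=D_{it}-\punmet\leq 10$, hence $v_g\leq 10$, so no feasible (let alone optimal) solution attains $v_g=P^\max_g-\bar q_g$. Without an absorption-type assumption the statement is false, so no amount of clever rebalancing can close the step you deferred.

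It is worth knowing that the paper's own proof does not close it either. The paper argues via nested projected feasible regions: replacing $v_g$ on the right-hand side of \eqref{eq: em_5} and \eqref{eq: em_6} by smaller values is claimed to give a subset of the region obtained at $P^\max_g-\bar q_g$. That inclusion is valid for the inequality \eqref{eq: em_5} but fails for the equality \eqref{eq: em_6}: slices of an equality-constrained set at different right-hand sides are disjoint, not nested, which is precisely what the counterexample above exploits. So your instinct that the renewables are ``the hard part'' identifies the exact point where the published proof is also deficient. A rigorous version needs either an explicit standing assumption that forced renewable output can always be absorbed (e.g., qualified renewable capacity small relative to load at every bus and period, which holds in the paper's NYISO data), or a model change allowing curtailment, i.e., relaxing \eqref{eq: em_6} to $p_{gt}\leq F^{CF}_{gt}(\bar q_g+v_g)$, after which your thermal argument extends verbatim to all of $\G$.
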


\begin{remark}\label{rm: cm_em_ind}
Thanks to Proposition \ref{th: em_h_opt}, we can decouple the capacity market problem and the energy market problem. This is because the term $\bar{q}_g + v_{g}$ in \eqref{eq: em_5} and \eqref{eq: em_6} can be replaced with $P^\max_g$, eliminating $\bar{q}_g$ from (EM), and thus (EM) is free of any capacity market decisions.
\end{remark}

 Therefore, the outcome of the energy market is not affected by the capacity market. As a result, the capacity market can be viewed as an extra revenue stream for generators, which does not affect the revenue in the energy market. Note that this result may not be true if there exists market power, as we show in Section \ref{ch: stackelberg}.

\subsection{Energy Market Equilibrium and Net CONE} \label{ch: em_cone}
In the energy market, the demand is modelled as inelastic and fixed. Thanks to these properties, a classical result states that the primal and dual solutions of DCOPF constitute a competitive equilibrium. For our particular DCOPF model, we include the formal statement and proof for a competitive equilibrium result in energy market as Proposition \ref{th: em_eq}. Let $p_{gt}^*$, $\punmetstar$, and $\lambda_{it}^*$ respectively be the optimal solutions for $p_{gt}$, $\punmet$, and $\lambda_{it}$ in (EM). Also, we use the notation $i(g)$ for the bus where generator $g$ is located, which is introduced in the proof and used subsequently throughout the paper. We can show that:
\begin{proposition}\label{th: em_eq}
The optimal prices $(\lambda_{it}^*)_{i\in\mc{N}}$, production levels $(p_{gt}^*)_{g\in\mc{G}}$, load shedding levels $(\punmetstar)_{i\in\mc{N}}$, and the demand $(D_{it})_{i\in\mc{N}}$ form a competitive equilibrium in the energy market at time $t\in\mc{T}$.
\end{proposition}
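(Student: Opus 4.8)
The plan is to show that the primal--dual optimal solution of (EM) satisfies the three defining conditions of a competitive equilibrium: (i) every generator's production maximizes its individual profit when facing the nodal price at its own bus, (ii) the inelastic consumers purchase their fixed demand at the nodal price as long as that price does not exceed the value of lost load, and (iii) the market clears at every node. The workhorse throughout is the set of KKT conditions of (EM), which are necessary and sufficient for optimality because (EM) is a linear program; the Greek letters annotating the constraints are exactly the multipliers I will use.

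First I would invoke Proposition \ref{th: em_h_opt} to set $v_{g} = P^\max_g - \bar{q}_g$, so that by Remark \ref{rm: cm_em_ind} the thermal capacity bound \eqref{eq: em_5} collapses to $p_{gt}\le P^\max_g$ and the problem carries no capacity-market quantities. I would then write the stationarity condition of (EM) with respect to $p_{gt}$ for a thermal generator $g$ located at bus $i(g)$; this yields a relation of the form $C^V_g - \lambda^*_{i(g)t} + \alpha^*_{gt} = 0$ with $\alpha^*_{gt}\ge 0$ together with the complementary slackness $\alpha^*_{gt}(P^\max_g - p^*_{gt}) = 0$. Note that the transmission multipliers $\iota_{ijt},\zeta_{ijt},\eta_{ijt}$ enter the stationarity conditions for $f_{ijt}$ and $\theta_{it}$ rather than for $p_{gt}$, so the generator's price interpretation stays clean.

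Next I would define each thermal generator's individual problem as $\max_{0\le p_{gt}\le P^\max_g}(\lambda^*_{i(g)t}-C^V_g)p_{gt}$ and verify that $p^*_{gt}$ is an optimizer: the stationarity relation forces $\lambda^*_{i(g)t}\ge C^V_g$ whenever $p^*_{gt}>0$ and $\lambda^*_{i(g)t}=C^V_g$ whenever $0<p^*_{gt}<P^\max_g$, which is precisely the optimality condition for the single-generator LP facing price $\lambda^*_{i(g)t}$. For renewable generators, output is pinned by \eqref{eq: em_6}, so there is no dispatch decision and their participation is trivially consistent. On the demand side, I would read off the stationarity and complementary-slackness conditions for $\punmet$: since $\punmet$ carries objective coefficient $\cvoll$ and appears in the nodal balance \eqref{eq: em_1}, one obtains $\lambda^*_{it}\le\cvoll$, with equality exactly when $\punmetstar>0$. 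This is the behavior of a consumer with marginal valuation $\cvoll$ for its inelastic load, namely load is served whenever the nodal price is at most $\cvoll$ and shed otherwise. Finally, market clearing at each node and time is just primal feasibility of \eqref{eq: em_1}, with the transmission flows governed by \eqref{eq: em_2} through \eqref{eq: em_4}.

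The main difficulty is bookkeeping rather than conceptual: one must carefully route the locational marginal prices $\lambda^*_{it}$ through the network so that each generator is matched to the price at its own bus $i(g)$, and confirm that the multipliers of the flow and Ohm's-law constraints do not contaminate the per-generator stationarity conditions. Once the KKT system is written out explicitly and the per-agent subproblems are matched term by term, each competitive-equilibrium condition follows directly, giving the result.
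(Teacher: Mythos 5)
Your proposal is correct, but it takes a genuinely different route from the paper. You prove per-agent optimality by matching the KKT system of (EM) to the optimality conditions of each agent's one-dimensional problem: dual feasibility \eqref{eq: em_kkt_1} together with the slackness conditions \eqref{eq: em_kkt_12} and \eqref{eq: em_kkt_18} yields $\lambda^*_{i(g),t}\ge C^V_g$ at full dispatch, $\lambda^*_{i(g),t}= C^V_g$ at interior dispatch, and $\lambda^*_{i(g),t}\le C^V_g$ at zero dispatch, which are exactly the conditions under which $p^*_{gt}$ solves $\max_{0\le p\le P^\max_g}(\lambda^*_{i(g),t}-C^V_g)\,p$; the shedding variables are handled analogously via \eqref{eq: em_kkt_3} and \eqref{eq: em_kkt_13}. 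The paper instead dualizes only the nodal balance constraints \eqref{eq: em_1} at the optimal multipliers $\lambda^*_{it}$, notes that this Lagrangian relaxation is exact because (EM) is a linear program, and observes that the relaxed objective separates by time, generator, and bus, so that the per-generator piece is literally the profit-maximization problem \eqref{eq: ther_profit_max}. The paper's decomposition argument needs no case analysis and treats thermal units, renewables, and the load-shedding terms in one stroke; your KKT-matching argument is more elementary, makes the price interpretation explicit (the same facts the paper reuses later in Theorem \ref{th: profit_em} and Proposition \ref{th: marg_gen}), and actually treats the demand side more carefully than the paper, which dismisses it by appealing to inelasticity. One caveat: the stationarity relation for $p_{gt}$ cannot be written as the unconditional equality $C^V_g-\lambda^*_{i(g),t}+\alpha^*_{gt}=0$, since $p_{gt}$ is sign-constrained; the correct condition is the inequality \eqref{eq: em_kkt_1} plus the slackness \eqref{eq: em_kkt_12}, with equality holding only where $p^*_{gt}>0$. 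Your subsequent deductions use the correct form, so this is a presentational slip rather than a gap in the argument.
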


The energy market model (EM) is a linear program with linear objective and convex feasible region. Thus, the market equilibrium of the energy market can be represented with KKT conditions for (EM). In addition to the primal constraints \eqref{eq: em_1} - \eqref{eq: em_10}, the KKT conditions also contain dual constraints and complementary slackness constraints. We list some of those constraints below which are useful in the development of our results:
\begin{subequations}\label{eq: em_kkt}
\begin{alignat}{4}
& \lambda_{i(g), t} -\alpha_{gt}  \leq C^V_g \hspace{1cm}&&\forall g\in\Gt, t\in\mc{T}\hspace{1cm}&&(p_{gt})\label{eq: em_kkt_1}\\
& \lambda_{it}\leq \cvoll&&\forall i\in\mc{N}, t\in\mc{T}&&(\punmet)\label{eq: em_kkt_3}\\
& (\lambda_{i(g),t} - \alpha_{gt} - C^V_g)p_{gt} = 0 \hspace{1cm}&&\forall g\in\Gt, t\in\mc{T}\label{eq: em_kkt_12}\\
& (\lambda_{it} - \cvoll)\punmet = 0&&\forall i\in\mc{N}, t\in\mc{T} \label{eq: em_kkt_13}\\
& (p_{gt} - \bar{q}_g - v_{g})\alpha_{gt} = 0 &&\forall g\in\Gt, t\in\mc{T} \label{eq: em_kkt_18},
\end{alignat}
\end{subequations}
where \eqref{eq: em_kkt_1} and \eqref{eq: em_kkt_3} are respectively dual constraints corresponding to primal variables $p_{gt}$ and $\punmet$. \eqref{eq: em_kkt_12} and \eqref{eq: em_kkt_13} are complementary slackness constraints respectively for dual constraints \eqref{eq: em_kkt_1} and \eqref{eq: em_kkt_3} and their corresponding primal variables. \eqref{eq: em_kkt_18} enforce complementary slackness between primal constraints \eqref{eq: em_5} and their corresponding dual variables.

Interestingly, the presence of the load shedding term $\punmet$ in (EM) provides an upper bound for the shadow price $\lambda_{it}$ via the dual problem, as indicated by dual constraints \eqref{eq: em_kkt_3}. Thus, $\cvoll$ can also be viewed as a price cap. This upper bound is necessary as the demand is assumed to be inelastic in the energy market, and thus the electricity price can potentially be infinitely high if there is no price cap. 

Using the KKT conditions, we can prove the following result about a thermal generator's profit in the energy market:
\begin{theorem}\label{th: profit_em}
If the thermal generator $g$ bids truthfully in the energy market, then its profit from the energy market at $t$ is as follows:

(1) If $\lambda^*_{i(g), t} > C^V_g$, then the profit is $(\lambda^*_{i(g), t} - C^V_g)P^\max_g\geq 0$. In particular, if $p_{i(g), t}^{\rm{Unmet} *} > 0$, then $\lambda^*_{i(g), t} = \cvoll$ and the profit is $(\cvoll - C^V_{g})P^\max_g$;

(2) If $\lambda^*_{i(g), t} \leq C^V_g$, then the profit is 0. 
\end{theorem}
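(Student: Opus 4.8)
The plan is to express generator $g$'s energy-market profit at time $t$ directly in terms of the optimal primal and dual solution of (EM), and then to read off the dispatch level $p_{gt}^*$ from the relevant KKT conditions in each of the two cases. Since the per-unit margin is $\lambda^*_{i(g),t} - C^V_g$, the entire statement reduces to pinning down $p_{gt}^*$ according to the sign of this quantity.

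First I would fix the profit formula. Because $g$ bids truthfully, it offers at its marginal cost $C^V_g$, is dispatched $p_{gt}^*$, and is paid the nodal clearing price $\lambda^*_{i(g),t}$; hence its energy-market profit at $t$ equals revenue minus variable cost, namely $(\lambda^*_{i(g),t} - C^V_g)\,p_{gt}^*$. By Proposition \ref{th: em_h_opt} I may take $\bar{q}_g + v_g = P^\max_g$, so that the capacity bound \eqref{eq: em_5} reads $p_{gt} \le P^\max_g$ and the complementary-slackness condition \eqref{eq: em_kkt_18} becomes $(p_{gt}^* - P^\max_g)\,\alpha_{gt}^* = 0$.

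For part (1), assume $\lambda^*_{i(g),t} > C^V_g$. The dual feasibility constraint \eqref{eq: em_kkt_1} gives $\alpha_{gt}^* \ge \lambda^*_{i(g),t} - C^V_g > 0$, so $\alpha_{gt}^* > 0$; complementary slackness \eqref{eq: em_kkt_18} then forces $p_{gt}^* = P^\max_g$, and the profit equals $(\lambda^*_{i(g),t} - C^V_g)P^\max_g \ge 0$. For the ``in particular'' claim, if additionally load shedding occurs at the bus ($p^{\rm{Unmet} *}_{i(g),t} > 0$), then complementary slackness \eqref{eq: em_kkt_13} yields $\lambda^*_{i(g),t} = \cvoll$, and substitution gives profit $(\cvoll - C^V_g)P^\max_g$. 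For part (2), assume $\lambda^*_{i(g),t} \le C^V_g$. If $\lambda^*_{i(g),t} = C^V_g$ the per-unit margin is zero and the profit vanishes for any $p_{gt}^*$; if $\lambda^*_{i(g),t} < C^V_g$, I argue by contradiction, since $p_{gt}^* > 0$ would force $\lambda^*_{i(g),t} - \alpha_{gt}^* - C^V_g = 0$ through \eqref{eq: em_kkt_12}, i.e. $\alpha_{gt}^* = \lambda^*_{i(g),t} - C^V_g < 0$, contradicting the sign restriction $\alpha_{gt} \ge 0$; hence $p_{gt}^* = 0$ and the profit is zero.

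The argument is essentially careful bookkeeping of dual feasibility and complementary slackness, so I do not anticipate a substantive obstacle. The one point requiring care is the reliance on Proposition \ref{th: em_h_opt} to identify the active capacity bound at $P^\max_g$ (rather than at $\bar{q}_g + v_g$ for some other optimal split of offered capacity), which is exactly what makes the dispatch $p_{gt}^* = P^\max_g$ in part (1), and hence the clean profit expression, correct.
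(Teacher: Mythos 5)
Your proposal is correct and follows essentially the same route as the paper's own proof: dual feasibility \eqref{eq: em_kkt_1} forces $\alpha^*_{gt}>0$, complementary slackness \eqref{eq: em_kkt_18} together with Proposition \ref{th: em_h_opt} pins the dispatch at $P^\max_g$ in case (1), complementary slackness \eqref{eq: em_kkt_13} handles the load-shedding refinement, and the contradiction via \eqref{eq: em_kkt_12} disposes of case (2). No gaps; the only cosmetic difference is that you substitute $\bar{q}_g+v^*_g=P^\max_g$ into the constraints up front rather than at the end.
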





\begin{remark}\label{rm: profit_em_renew}
For a renewable generator, since it does not have any variable cost, its profit in the energy market is $\lambda^*_{i(g), t} F^{CF}_{gt} P^\max_g\geq 0$, and the profit equals 0 only if $\lambda^*_{i(g), t} = 0$. This contributes to the observation that wind generators have high energy market profit and low net CONE in our case study.
\end{remark}

A marginal generator in the energy market is the last (i.e., the most expensive) generator to supply electricity at a bus. We are particularly interested in the profit of a thermal generator when it is marginal: 
\begin{proposition}\label{th: marg_gen}
Assuming a thermal generator $\gtilde$ is marginal at bus $i$ and time period $t$:

(1) If the generator does not produce at its full capacity, then its profit from the energy market at $t$ is 0;

(2) If the generator produces at its full capacity, then its profit from the energy market at $t$ is between 0 and $(\cvoll - C^V_{\gtilde})P^\max_{\gtilde}$, and equals $(\cvoll - C^V_{\gtilde})P^\max_{\gtilde}$ when $p^{\rm{Unmet} *}_{i(\gtilde), t} > 0$.
\end{proposition}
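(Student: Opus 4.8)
The plan is to reduce the proposition to the KKT characterization of the marginal generator combined with Theorem \ref{th: profit_em}. First I would invoke Proposition \ref{th: em_h_opt} to replace $\bar{q}_{\gtilde} + v_{\gtilde}$ by $P^\max_{\gtilde}$, so that the relevant capacity constraint \eqref{eq: em_5} reads $p_{\gtilde t} \leq P^\max_{\gtilde}$ with nonnegative multiplier $\alpha_{\gtilde t}$. Since $\gtilde$ is marginal at bus $i$ and time $t$, it is a dispatched unit and hence supplies a strictly positive amount, $p^*_{\gtilde t} > 0$. The complementary-slackness condition \eqref{eq: em_kkt_12} then forces $\lambda^*_{i(\gtilde), t} = C^V_{\gtilde} + \alpha_{\gtilde t}$. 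This single identity is what drives both cases, so I would isolate it at the outset.

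For part (1), if $\gtilde$ does not produce at full capacity then $p^*_{\gtilde t} < P^\max_{\gtilde}$, so constraint \eqref{eq: em_5} is slack and complementary slackness \eqref{eq: em_kkt_18} gives $\alpha_{\gtilde t} = 0$. The identity above then yields $\lambda^*_{i(\gtilde), t} = C^V_{\gtilde}$, and applying Theorem \ref{th: profit_em}(2) (the regime $\lambda^*_{i(\gtilde), t} \leq C^V_{\gtilde}$) immediately gives a profit of $0$.

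For part (2), when $\gtilde$ produces at full capacity the identity $\lambda^*_{i(\gtilde), t} = C^V_{\gtilde} + \alpha_{\gtilde t}$ together with $\alpha_{\gtilde t} \geq 0$ gives $\lambda^*_{i(\gtilde), t} \geq C^V_{\gtilde}$, so by Theorem \ref{th: profit_em} the profit equals $(\lambda^*_{i(\gtilde), t} - C^V_{\gtilde}) P^\max_{\gtilde} \geq 0$. The dual-feasibility bound \eqref{eq: em_kkt_3}, namely $\lambda^*_{i(\gtilde), t} \leq \cvoll$, supplies the upper bound $(\cvoll - C^V_{\gtilde}) P^\max_{\gtilde}$; and when $p^{\rm{Unmet} *}_{i(\gtilde), t} > 0$, complementary slackness \eqref{eq: em_kkt_13} forces $\lambda^*_{i(\gtilde), t} = \cvoll$, so the profit attains that bound.

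The analytic content is entirely packaged in the KKT system \eqref{eq: em_kkt} and Theorem \ref{th: profit_em}, so the body of the argument is a routine two-way case split on whether the capacity constraint binds. I expect the only point requiring care to be expository: making precise that a marginal generator is dispatched (so that \eqref{eq: em_kkt_12} may be divided through by $p^*_{\gtilde t}$) and that ``at full capacity'' versus ``not at full capacity'' exhaust the possibilities once $p^*_{\gtilde t} > 0$ is established. I would tie the positivity of $p^*_{\gtilde t}$ directly to the stated definition of ``marginal'' as the most expensive unit supplying at the bus.
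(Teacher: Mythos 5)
Your proposal is correct and follows essentially the same argument as the paper's proof: positivity of $p^*_{\gtilde t}$ from marginality, the identity $\lambda^*_{i(\gtilde),t} = C^V_{\gtilde} + \alpha^*_{\gtilde t}$ from \eqref{eq: em_kkt_12}, the case split on whether \eqref{eq: em_5} binds (using Proposition \ref{th: em_h_opt} and \eqref{eq: em_kkt_18}), the bound $\lambda^*_{i(\gtilde),t} \leq \cvoll$ from \eqref{eq: em_kkt_3}, and Theorem \ref{th: profit_em} to convert prices into profits. The only cosmetic difference is that you invoke Proposition \ref{th: em_h_opt} once at the outset rather than inside case (1), which changes nothing of substance.
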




With this proposition, we can derive the following useful result for the net CONE of generators that operate below their full capacity, which are usually generators with high variable costs:
\begin{corollary}\label{th: marg_gen_cone}
If a thermal generator $g$ never operates at the full capacity, then its net CONE equals its investment cost.
\end{corollary}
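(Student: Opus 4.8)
The plan is to show that a thermal generator which is never dispatched to its full capacity earns exactly zero profit in the energy market in every time period, so that the energy market profit term in the net CONE definition \eqref{eq: def_cone} vanishes and net CONE collapses to the investment cost.

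First I would invoke Proposition \ref{th: em_h_opt} to take $v_{g} = P^\max_g - \bar{q}_g$, so that the capacity bound \eqref{eq: em_5} reads $p_{gt} \leq \bar{q}_g + v_{g} = P^\max_g$. The hypothesis that $g$ never operates at full capacity then means $p_{gt} < \bar{q}_g + v_{g}$ strictly for every $t \in \mc{T}$. Feeding this into the complementary slackness condition \eqref{eq: em_kkt_18}, namely $(p_{gt} - \bar{q}_g - v_{g})\alpha_{gt} = 0$, forces $\alpha_{gt} = 0$ for all $t$, since the first factor is strictly negative.

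Next I would substitute $\alpha_{gt} = 0$ into the dual feasibility constraint \eqref{eq: em_kkt_1}, which yields $\lambda^*_{i(g), t} \leq C^V_g$ at every $t$. This places generator $g$ in case (2) of Theorem \ref{th: profit_em} for each time period, so its energy market profit at each $t$ is $0$; equivalently, because case (1) of that theorem returns a strictly positive markup $(\lambda^*_{i(g), t} - C^V_g)P^\max_g$ attainable only at full-capacity dispatch, forbidding full capacity forbids case (1). Summing the zero per-period profits over $t \in \mc{T}$ shows that the total energy market profit of $g$ is $0$.

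Finally I would plug this into \eqref{eq: def_cone} to obtain net CONE $= (\text{investment cost} - 0)^+ = \text{investment cost}$, using that the investment cost is nonnegative so the $(\cdot)^+$ operation is inactive. The only delicate step is the complementary-slackness deduction that $\alpha_{gt} = 0$; everything downstream is direct substitution, so I do not anticipate a genuine obstacle beyond making the ``strictly below capacity at every period'' hypothesis precise and confirming that it rules out case (1) of Theorem \ref{th: profit_em}.
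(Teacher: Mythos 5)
Your proof is correct, but it takes a genuinely different route from the paper's. The paper argues economically: it splits into the case where $g$ produces nothing (trivially zero profit) and the case where it produces strictly below capacity; in the latter case it argues that $g$ must be the marginal generator at its bus (otherwise cost minimization would have shifted production from a more expensive operating generator onto $g$), and then invokes Proposition \ref{th: marg_gen}(1) to conclude zero profit. You instead work directly with the KKT system: strict slack in \eqref{eq: em_5} forces $\alpha_{gt}=0$ via complementary slackness \eqref{eq: em_kkt_18}, dual feasibility \eqref{eq: em_kkt_1} then gives $\lambda^*_{i(g),t}\leq C^V_g$, and Theorem \ref{th: profit_em}(2) yields zero profit in every period. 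Your route buys two things: it handles the zero-production and partial-production cases uniformly (no case split is needed, because dual feasibility is an inequality that holds regardless of whether $p^*_{gt}>0$, unlike the complementary slackness equation \eqref{eq: em_kkt_12} that underlies Proposition \ref{th: marg_gen}), and it replaces the paper's informal ``a cheaper generator could have supplied it'' marginality argument with a mechanical LP-duality deduction. What the paper's version buys is the economic identification of the below-capacity generator as the price-setting marginal unit, which the surrounding text reuses when discussing peakers. Both arguments share the same interpretive caveat inherited from Proposition \ref{th: em_h_opt}: the hypothesis ``never operates at full capacity'' is read with respect to the optimal solution in which $v_g = P^\max_g - \bar{q}_g$; since the paper's own proof of Proposition \ref{th: marg_gen} does exactly the same, this is not a gap in your argument.
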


Since the peaker is a thermal plant that has the highest variable costs, it is always the marginal generator when it operates. Therefore, the peaker earns a positive profit in energy market only if it operates at full capacity. In other words, the peaker benefits greatly from capacity scarcity and shortages in the energy market. If the peaker never operates at full capacity, then $\Ccone$ equals the peaker's investment cost.

As a summary, our analysis in Section \ref{ch: cm_profit} and this section provides insights on the benefits and limitations of capacity markets. If there is no capacity market, then a generator is profitable only if its net CONE equals 0, while in practice the majority of generators have a positive net CONE. To ensure the profitability of more generators without the capacity market, we need to increase the total profit of generators in the energy market $\sum_{t\in\T}(\lambda^*_{i(g), t} - C^V_g)^+ P^\max_g$, which means the market clearing price $\lambda^*_{i(g),t}$ needs to be higher in more time periods. This is achieved by setting higher VOLL and having more time periods of scarcity, which creates undesirable price spikes and reduces reliability of the system.
On the other hand, with the capacity market, generators with lower net CONE are profitable and can earn a high profit from the capacity payment. In contrast, the peaker is usually unprofitable, so additional revenue streams may be necessary to keep them in the system.

\section{Capacity Market and Market Power}\label{ch: stackelberg}
We study the market power in joint capacity and energy markets with a leader-follower game framework. The leader is a dominant strategic generator that can submit untruthful bids in both markets, while the other generators are modeled as a competitive fringe and they submit truthful bids. After accepting bids from all generators, the ISO runs auctions to clear the market. The market clearing process is modeled as a follower. This setup with strategic generator(s) as the leader(s) and market clearing as the follower is a commonly-used framework for modeling market power in electricity markets, see for example, \cite{ehrenmann2009comparison} which models strategic generators in energy markets with complex networks, and \cite{zhao2010gaming} which models strategic generators in a joint energy-reserve market. 


In general, there could be several reasons for a firm to become dominant. For example, it may have a larger market share, or it is more efficient, or it has better reputation, or it colludes with other firms to act as a dominant firm \citep{micro2016}. In the energy market, there are usually a few dominant generators that can influence the market outcome, while the other generators with small market share are incentivized to bid truthfully \citep{zhao2010gaming}. The capacity market can also be dominated by a few large generators \citep{schwenen2015strategic}. Our model assumes that there is a single generator that dominates both markets, which is a simplification of real-life strategic behavior in electricity markets.

The goal of our study {\it is not} to provide a comprehensive description of possible strategic behaviors in the capacity and energy markets. Instead, we would like to showcase the market outcome when there is a strategic generator. We demonstrate how strategic bidding behavior may interfere with the intended outcome of the capacity market, which lends insights to the design of market power mitigation policies (Section \ref{ch: stack_cm}). Our framework also serves as a tool for investigating under what conditions the capacity market may mitigate market power in the energy market (Section \ref{ch: stack_both}). In addition, we propose a trilevel optimization model for the leader-follower game in both markets (Section \ref{ch: trilevel}), which enables us to efficiently solve large-scale case studies.

A strategic generator could behave differently when threatened with regulatory intervention \citep{ehrenmann2009comparison}. We do not consider such regulatory threat in this study. Instead, we focus on market-power-related issues in a deregulated setting.

It could be debatable whether a generator's truthful bid in the capacity market is net CONE. If the investment cost is treated as a sunk cost, then a generator's true cost of providing capacity equals the going-forward cost, which corresponds to the fixed O\&M cost in our setup. Yet, if a generator acquired financing through loans and has to repay them over time, or if we consider its value for resale, then it is reasonable not to view the investment cost as sunk \citep{tirole1988theory}. Without further complicating the discussion, in this work we consider net CONE as the truthful bid since it is the long-term marginal cost of capacity.


\subsection{Market Power in Capacity Market}\label{ch: stack_cm}





 In this section, we use the leader-follower game to study capacity market outcomes in the presence of a strategic generator (i.e., the leader), and based on these results, we offer targeted recommendations for market power mitigation. 

Since a generator's total cost in the capacity market (i.e., total net CONE) is a constant $W_1 F^U_1 P^{\max}_1$, the leader's profit-maximization problem in the capacity market is equivalent to its revenue-maximization problem. Consider a leader that does not necessarily bid its true net CONE. To maximize its revenue in the capacity market, the leader can either bid 0 (or any other prices lower than the market clearing price) and sell all qualified capacity, or select an offer price that maximizes its revenue subject to being the marginal supplier. It will never offer a price that is higher than the market clearing price. We provide conditions where the leader deviates from its truthful price and evaluate the subsequent market outcomes. 
Note that in this section, we assume the leader submits untruthful bids only in price and not in quantity, an assumption that is supported by our case study in Section \ref{ch: cm_experiment}. 

Before we proceed, we define some additional notation. Let the leader's index be $g = 1$ and its offer price $w_1$.  Let $\gdot$ and $\gddot$ respectively denote the most expensive generator in $\Ghat\sm\{\ghat\}$ and the least expensive generator in $\mc{G}\sm\Ghat$, i.e., $W_\gdot = \max_{i\in\Ghat\sm\{\ghat\}}W_i$ and $W_\gddot = \min_{i\in\mc{G}\sm\Ghat}W_i$. We use $\epsilon$ to denote a very small monetary value, e.g., 1 cent, which is used by the leader to undercut other bidders. We use the prime symbol ($'$) to denote the outcome of the leader-follower setup when the leader bids 0. For example, $q'_1$ is the leader's sold capacity when it bids 0. Similarly, we use the double prime symbol ($''$) to denote the outcome when the leader selects a bid that maximizes its revenue subject to being the marginal supplier. Additionally, we define $\B = \frac{1}{2}(\Pi^\max - A \sum_{i\in\Ghat''\sm\{1\}}F^U_i P^\max_i)$, a useful expression that appears in Propositions \ref{th: cm_stack_allocated} - \ref{th: cm_stack_unallocated} below. 

The following lemma is useful for proving subsequent results. It categorizes the market outcome when different types of leaders bid 0. It shows that bidding 0 increases revenue for a marginal supplier with either very high or very low qualified capacity, and for an unallocated supplier.
\begin{lemma}\label{th: cm_bid_0}
If the leader bids 0 in the capacity market, we have the following results for its revenue: 

(1) If the leader $g = 1$ is in the set $\Ghat\sm\{\ghat\}$ when bidding truthfully, then its revenue does not change when it bids 0, and the new market clearing price $W_{\ghat'} = W_{\ghat}$.

(2) If the leader $g=1$ is the marginal supplier when bidding truthfully, then the leader increases revenue by bidding 0 when 
(i) $F^U_1 P^\max_1 \geq \frac{\Pi^\max - W_\gdot}{A} - \sum_{i\in\Ghat\sm\{1\}} F^U_i P^\max_i$ and $F^U_1 P^\max_1 > \frac{W_1}{W_{\ghat'}}\left(\frac{\Pi^\max - W_1}{A} - \sum_{i\in\Ghat\sm\{1\}} F^U_i P^\max_i\right)$, in which case the new market clearing price satisfies $W_{\ghat'} \leq W_\gdot$; 
(ii) $F^U_1 P^\max_1 < \frac{\Pi^\max - W_\gdot}{A} - \sum_{i\in\Ghat\sm\{1\}} F^U_i P^\max_i$, in which case the new market clearing price satisfies $W_{\ghat'} \geq W_\gddot$.

(3) If the leader $g=1$ is in the set $\G\sm\Ghat$ when bidding truthfully, then it generates more revenue when bidding 0, and the new market clearing price satisfies $W_{\ghat'} \leq W_\ghat$.
\end{lemma}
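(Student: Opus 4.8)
The unifying observation is that whenever the leader bids $0$ it sits at the very bottom of the merit order, so (provided the market still clears at a strictly positive price) it is fully allocated, $q'_1 = F^U_1 P^\max_1$, and its revenue is exactly $W_{\ghat'} F^U_1 P^\max_1$. Thus in every case the task reduces to (a) locating the new clearing price $W_{\ghat'}$ and (b) comparing $W_{\ghat'} F^U_1 P^\max_1$ with the leader's truthful revenue. The main tool for (a) is a monotone supply-shift argument: the clearing price is the point where the nondecreasing step cumulative-supply curve meets the strictly decreasing demand line $Q = (\Pi^{\max}-P)/A$, so adding offered capacity at prices below a reference level can only weakly lower the clearing price. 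I plan to track, for each case, how relocating the leader's capacity to price $0$ changes the total offered capacity priced below the old clearing price $W_{\ghat}$.

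Cases (1) and (3) are the quick ones. In case (1) the leader is already fully allocated at a price $W_1 < W_{\ghat}$; relocating its capacity from $W_1$ to $0$ leaves the total offered capacity priced below $W_{\ghat}$ unchanged, so the demand line still crosses supply at the same generator, giving $W_{\ghat'} = W_{\ghat}$ and identical revenue $W_{\ghat} F^U_1 P^\max_1$. In case (3) the leader is unallocated with $W_1 > W_{\ghat}$, so relocating its capacity to $0$ strictly increases the offered capacity priced below $W_{\ghat}$ by $F^U_1 P^\max_1$; evaluating the shifted supply at $W_{\ghat}$ shows supply now strictly exceeds demand there, forcing $W_{\ghat'} \le W_{\ghat}$, while the leader's revenue rises from $0$ to $W_{\ghat'} F^U_1 P^\max_1 > 0$.

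The substantive case is (2), where the leader is the marginal supplier, so $W_1 = W_{\ghat}$ and its truthful allocation is $q_1 = \frac{\Pi^{\max}-W_1}{A} - \sum_{i\in\Ghat\sm\{1\}} F^U_i P^\max_i \le F^U_1 P^\max_1$. After the leader bids $0$, the cumulative capacity of the leader together with all of $\Ghat\sm\{1\}$ (whose prices are at most $W_\gdot$) equals $C_1 := F^U_1 P^\max_1 + \sum_{i\in\Ghat\sm\{1\}} F^U_i P^\max_i$, and I would compare $C_1$ with the demand $\frac{\Pi^{\max}-W_\gdot}{A}$ at price $W_\gdot$. If $C_1 \ge \frac{\Pi^{\max}-W_\gdot}{A}$ (subcase (i)), demand is met at or below $W_\gdot$, so $W_{\ghat'} \le W_\gdot < W_1$; the leader then sells its full capacity but at a lower price, and writing $W_{\ghat'} F^U_1 P^\max_1 > W_1 q_1$ and dividing by $W_{\ghat'}$ yields exactly the stated inequality $F^U_1 P^\max_1 > \frac{W_1}{W_{\ghat'}}\big(\frac{\Pi^{\max}-W_1}{A} - \sum_{i\in\Ghat\sm\{1\}} F^U_i P^\max_i\big)$. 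If instead $C_1 < \frac{\Pi^{\max}-W_\gdot}{A}$ (subcase (ii)), the leader plus $\Ghat\sm\{1\}$ fall short of demand at every price up to $W_\gdot$, so the clearing price is pushed up into the gap above $W_\gdot$ toward the next-cheapest offer $\gddot$, giving $W_{\ghat'} \ge W_\gddot > W_1$; since $F^U_1 P^\max_1 \ge q_1$ and $W_{\ghat'} > W_1$, the new revenue $W_{\ghat'} F^U_1 P^\max_1$ strictly exceeds $W_1 q_1$ with no further hypothesis needed.

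The step I expect to be the main obstacle is pinning down $W_{\ghat'}$ precisely in case (2): inserting the leader's full capacity at the bottom while vacating its old marginal slot can make the demand line meet the supply curve on a flat (horizontal) portion rather than at a generator's offer price, and I will need to fix the clearing-price convention in this boundary situation so that subcase (ii) indeed reports $W_{\ghat'} \ge W_\gddot$. Once the location of $W_{\ghat'}$ relative to $W_\gdot$ and $W_\gddot$ is settled, the revenue comparisons in all three cases are short algebraic manipulations. I would also record the standing assumption that the market clears at a positive price, which guarantees the leader is fully allocated when it bids $0$ and hence that its bid-$0$ revenue equals $W_{\ghat'} F^U_1 P^\max_1$.
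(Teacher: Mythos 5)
Your proposal is correct and follows essentially the same route as the paper's proof: the same case analysis on the leader's truthful status, the same supply--demand crossing argument to locate the new clearing price relative to $W_\gdot$ and $W_\gddot$ (including the identical threshold condition $F^U_1 P^\max_1 \gtrless \frac{\Pi^\max - W_\gdot}{A} - \sum_{i\in\Ghat\sm\{1\}} F^U_i P^\max_i$ in case (2)), and the same algebraic revenue comparisons, with your division by $W_{\ghat'}$ reproducing the paper's subcase (i) inequality exactly. The boundary convention you flag as a potential obstacle is already resolved by the paper's greedy market-clearing rule, under which the clearing price always equals the marginal supplier's offer price, so no additional work is needed there.
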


One insight from Lemma \ref{th: cm_bid_0} is that, if the marginal supplier chooses to bid 0, then the market clearing price is more likely to increase when the demand is more elastic (i.e., $A$ is lower), and otherwise it is more likely to decrease. Note that a larger ICAP requirement $(1+\Gamma)D^{Peak}$ contributes to a lower $A$.

We now investigate under what conditions bidding untruthfully benefits the leader. We consider 3 different cases depending on whether the leader is allocated and whether it is the marginal supplier when bidding truthfully. Those results are presented in Propositions \ref{th: cm_stack_allocated} - \ref{th: cm_stack_unallocated}.  

Note that if the leader bids untruthfully and becomes the marginal supplier, there could be multiple possibilities for the updated set of allocated generators, denoted by $\Ghat''$, that could increase the leader's revenue. For example, if the leader has a large qualified capacity and is in $\Ghat\sm\{\ghat\}$ when bidding truthfully, and if both $F^U_\ghat P^\max_\ghat$ and $F^U_\gddot P^\max_\gddot$ are not too high, then the leader may become marginal by bidding $w''_1\in [W_\ghat, W_\gddot)$, in which case $\Ghat'' = \Ghat$, or by bidding a price that is slightly higher than $W_\gddot$, and $\Ghat''=\Ghat\cup\{\gddot\}$. We denote the set of all possibilities of $\Ghat''$ as $\Gset$.

\begin{proposition}\label{th: cm_stack_allocated}
If the leader $g = 1$ is in the set $\Ghat\sm\{\ghat\}$ when bidding truthfully, then it can increase the revenue by bidding a price that is different from its net CONE and become the marginal supplier if $\exists \bar{\Gset}$ such that $\forall \Ghat''\in\bar{\Gset}$, the corresponding $B > W_{\ghat}$, and if either of the following is true:

(i) $\max_{\Ghat''\in\bar{\Gset}, B < W_{\gddot''}}B>\sqrt{A W_{\ghat}F^U_1 P^\max_1}$;

(ii) $\max_{\Ghat''\in\bar{\Gset}} W_{\gddot''} \left(\frac{\Pi^\max-W_{\gddot''}}{A}-\sum_{i\in\Ghat''\sm\{1\}}F^U_i P^\max_i\right) > W_\ghat F^U_1 P^\max_1$.
\end{proposition}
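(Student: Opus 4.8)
The plan is to benchmark the leader's truthful revenue against the best revenue it can extract by raising its price until it becomes the marginal supplier, and to show that conditions (i) and (ii) are precisely the two regimes in which the deviation strictly wins. Since the leader lies in $\Ghat\sm\{\ghat\}$ when bidding truthfully, Theorem~\ref{th: profit_cm}(1) gives its profit as $(W_\ghat - W_1)F^U_1 P^\max_1$; adding back the constant cost $W_1 F^U_1 P^\max_1$, its truthful revenue is exactly $W_\ghat F^U_1 P^\max_1$. This is the quantity the deviation must beat.

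First I would write the leader's revenue as an explicit function of an untruthful bid $w''_1$, under the hypothesis that the realized allocated set is a fixed $\Ghat''\in\Gset$ with the leader marginal. Because demand is linear and the leader is marginal, the clearing price is $\pi''=w''_1$ and the cleared quantity is $r''=(\Pi^\max-w''_1)/A$; the greedy rule then yields the leader's sold capacity $q''_1 = r'' - \sum_{i\in\Ghat''\sm\{1\}}F^U_i P^\max_i$. Hence the revenue $w''_1 q''_1$ is a concave quadratic in $w''_1$ (the coefficient of $(w''_1)^2$ is $-1/A<0$). Setting its derivative to zero gives the unconstrained maximizer $w''_1 = \tfrac12\big(\Pi^\max - A\sum_{i\in\Ghat''\sm\{1\}}F^U_i P^\max_i\big)=\B$, exactly the quantity defined before the proposition, and substituting $A\sum_{i\in\Ghat''\sm\{1\}}F^U_i P^\max_i = \Pi^\max - 2\B$ collapses the optimal revenue to the clean form $\B^2/A$.

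Next I would impose feasibility and split into cases. For $\Ghat''$ to be the realized cleared set with the leader marginal, the bid must stay strictly below $W_{\gddot''}$ (otherwise $\gddot''$ clears and the set changes); the hypothesis $\B>W_\ghat$ imposed for every $\Ghat''\in\bar\Gset$ guarantees the concave revenue is still increasing as the bid crosses the old clearing price $W_\ghat$, so moving into the (higher-priced) marginal position is genuinely profitable and consistent with $\ghat$ now clearing fully beneath the leader's new bid. \textbf{Case (i):} if $\B<W_{\gddot''}$, the unconstrained optimum is attainable, the leader bids $\B$ and earns $\B^2/A$, which exceeds the benchmark iff $\B>\sqrt{A W_\ghat F^U_1 P^\max_1}$; maximizing over those $\Ghat''\in\bar\Gset$ with $\B<W_{\gddot''}$ reproduces condition~(i). \textbf{Case (ii):} if $\B\ge W_{\gddot''}$, the revenue is increasing up to the feasibility boundary, so the leader bids $W_{\gddot''}-\epsilon$ and, as $\epsilon\downarrow 0$, earns arbitrarily close to $W_{\gddot''}\big(\tfrac{\Pi^\max-W_{\gddot''}}{A}-\sum_{i\in\Ghat''\sm\{1\}}F^U_i P^\max_i\big)$; requiring this to beat the benchmark, maximized over $\bar\Gset$, reproduces condition~(ii). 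A single admissible $\Ghat''$ suffices to exhibit a profitable deviation, which is why the statement only needs one of (i), (ii) to hold at the maximizing member of $\bar\Gset$.

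The step I expect to be the main obstacle is the feasibility bookkeeping in the previous paragraph: one must verify that a candidate $\Ghat''$ is genuinely realizable as the cleared set when the leader bids in the relevant interval. This means tracking how the merit order reshuffles once the leader lifts its price above $W_\ghat$ and $r''$ shrinks — which cheaper generators remain fully cleared — and confirming both the lower bound (the leader is still needed, $q''_1>0$) and the upper bound (the leader is not fully allocated, $q''_1\le F^U_1 P^\max_1$, and $\gddot''$ stays out, $w''_1<W_{\gddot''}$). Characterizing the collection $\Gset$ of all such attainable sets, and carrying out the limiting $\epsilon$-undercut in Case~(ii) without overshooting into a different set, is the delicate part; the single-variable concave maximization over $w''_1$ itself is routine.
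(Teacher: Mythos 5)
Your proposal is correct and follows essentially the same route as the paper's proof: both establish the truthful benchmark $W_{\ghat}F^U_1 P^\max_1$, write the marginal-supplier revenue as the concave quadratic $-\frac{1}{A}(w''_1-\B)^2+\frac{\B^2}{A}$ for each realizable cleared set $\Ghat''$, and split into the cases $\B<W_{\gddot''}$ (interior maximizer $\B$, yielding condition (i)) versus $\B\ge W_{\gddot''}$ (boundary bid at or just below $W_{\gddot''}$, yielding condition (ii)). The only cosmetic differences are that you obtain the benchmark from Theorem \ref{th: profit_cm} rather than Lemma \ref{th: cm_bid_0}, and you handle the boundary case with an explicit $\epsilon$-undercut limit, while the paper evaluates the piecewise quadratic at $W_{\gddot''}$ directly; the feasibility bookkeeping for $\Gset$ that you flag as delicate is treated at the same level of detail (i.e., largely by definition) in the paper as well.
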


Intuitively, this result shows that if a leader is allocated and not marginal, then it increases the revenue from the capacity market by bidding a higher price and becoming the marginal supplier, if the difference between $W_{\ghat}$ and $W_{\gddot''}$ is large, and the total qualified capacity of other allocated suppliers $\sum_{i\in\Ghat''\sm\{1\}}F^U_i P^\max_i$ is not very high, and itself has a low qualified capacity, and the demand is more elastic. Therefore, such a leader is likely to be truthful in a denser market or a market with relatively high demand, or if it has a very high qualified capacity, or if the demand is less elastic. 


In Proposition \ref{th: cm_stack_allocated} we implicitly assume that $\gddot$ does exist, i.e., the marginal generator is not the peaker. Otherwise, the marginal generator has a net CONE equals $\Ccone$, and when the leader bids higher than $\Ccone$ the market will clear below the capacity requirement.

\begin{proposition}\label{th: cm_stack_marginal}
    If the leader $g=1$ is the marginal supplier when bidding truthfully, then it could increase the revenue by either bidding 0 or bidding a different price as the marginal supplier. The leader prefers to still be the marginal supplier if $\exists \bar{\Gset}$ such that $\forall \Ghat''\in\bar{\Gset}$, the corresponding $B > W_{\ghat'}$, and if either of the following is true:
    
    (i) $\max_{\Ghat''\in\bar{\Gset}, B < W_{\gddot''}}B>\sqrt{A W_{\ghat'}F^U_1 P^\max_1}$;
    
    (ii) $\max_{\Ghat''\in\bar{\Gset}} W_{\gddot''} \left(\frac{\Pi^\max-W_{\gddot''}}{A}-\sum_{i\in\Ghat''\sm\{1\}}F^U_i P^\max_i\right) > W_{\ghat'} F^U_1 P^\max_1$.

    The leader's strategic offer price $w''_1$ equals either $B$ or $W_{\gddot''}$ depending on which price leads to a higher revenue. The leader bids truthfully only if $w''_1 = W_1$.
\end{proposition}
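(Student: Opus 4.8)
The plan is to mirror the analysis of Proposition \ref{th: cm_stack_allocated}, using the fact that a leader who is marginal under truthful bidding has exactly two candidate revenue-improving deviations: bid $0$, or remain marginal at a revenue-maximizing price. I would first pin down the bid-$0$ benchmark. By Proposition \ref{th: cm_h_opt} the leader always offers its full qualified capacity $F^U_1 P^\max_1$, so bidding $0$ makes it allocated but non-marginal, and Lemma \ref{th: cm_bid_0}(2) gives the resulting clearing price $W_{\ghat'}$; hence the bid-$0$ revenue is exactly $W_{\ghat'} F^U_1 P^\max_1$. Since the truthful price $W_1$ is itself an admissible ``stay-marginal'' bid, the best marginal revenue is automatically at least the truthful revenue, so the leader's optimal play is the larger of the bid-$0$ revenue and the best marginal revenue. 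The proposition's conditions are precisely those under which the latter wins, so the benchmark to beat is $W_{\ghat'}F^U_1P^\max_1$ (not the truthful revenue), which explains why $W_{\ghat'}$ rather than $W_1$ appears throughout.

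Next I would compute the best marginal revenue. Raising the offer changes the allocated set, so I parametrize by each reachable configuration $\Ghat''\in\Gset$: as the leader sweeps its price upward it absorbs each rival whose net CONE it crosses, generating the family $\Gset$. Fixing $\Ghat''$, the leader is marginal only for prices $w''_1$ in an interval bounded above by $W_{\gddot''}$, and its revenue is
\begin{align*}
w''_1 q''_1 = w''_1\left(\frac{\Pi^\max - w''_1}{A} - \sum_{i\in\Ghat''\sm\{1\}}F^U_i P^\max_i\right) = -\frac{1}{A}(w''_1 - B)^2 + \frac{B^2}{A},
\end{align*}
a concave parabola with unconstrained maximizer $w''_1 = B$, where $B = \frac12(\Pi^\max - A\sum_{i\in\Ghat''\sm\{1\}}F^U_i P^\max_i)$ is the quantity defined before the statement. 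Exactly as in the feasibility step for Proposition \ref{th: cm_stack_allocated}, the inequality $\tfrac{\Pi^\max - W_{\ghat'}}{A}\le\sum_{i\in\Ghat''}F^U_iP^\max_i$ shows that when $B\le W_{\ghat'}$ no marginal bid can beat the benchmark, which is why the hypothesis restricts attention to a subfamily $\bar\Gset\subseteq\Gset$ on which $B>W_{\ghat'}$. Maximizing the parabola over the feasible interval then splits into two cases: if $B<W_{\gddot''}$ the vertex is attained and the revenue is $B^2/A$ at $w''_1=B$; if $B\ge W_{\gddot''}$ the revenue rises to the boundary and equals $W_{\gddot''}\left(\tfrac{\Pi^\max-W_{\gddot''}}{A}-\sum_{i\in\Ghat''\sm\{1\}}F^U_iP^\max_i\right)$ at $w''_1=W_{\gddot''}-\epsilon$.

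Finally I would compare each case against the bid-$0$ revenue $W_{\ghat'}F^U_1P^\max_1$. In the first case $B^2/A>W_{\ghat'}F^U_1P^\max_1$ is equivalent, since $B>0$, to $B>\sqrt{A W_{\ghat'}F^U_1P^\max_1}$, which is condition (i); in the second case the comparison reads condition (ii) verbatim. Taking the maximum over $\bar\Gset$ delivers the stated sufficient conditions, and the revenue-maximizing marginal price is $B$ or $W_{\gddot''}$ according to which case applies, so the leader stays truthful exactly when that optimizer equals $W_1$. The main obstacle I anticipate is the combinatorial bookkeeping around $\Gset$: correctly characterizing which allocation sets are reachable as the price sweeps upward, checking that the admissible interval for $w''_1$ is nonempty in each configuration, and confirming that maximizing over $\bar\Gset$ cannot miss a more profitable deviation. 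Once that structure is fixed, the quadratic maximization and the two final comparisons are routine.
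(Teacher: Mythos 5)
Your proposal is correct and follows essentially the same route as the paper: both fix the bid-$0$ benchmark revenue $W_{\ghat'}F^U_1 P^\max_1$ and compare it against the best stay-marginal revenue via the concave parabola, splitting into the vertex case $w''_1 = B$ and the boundary case $w''_1 = W_{\gddot''}$, which yields conditions (i) and (ii) respectively. The only cosmetic difference is that the paper gets this comparison by invoking Proposition \ref{th: cm_stack_allocated} with $0$ treated as the truthful bid and $W_{\ghat'}$ as the corresponding clearing price, whereas you re-derive that proposition's parabola analysis inline.
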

    
    



Since the chance that $w''_1$ equals exactly $W_1$ is small, the leader is unlikely to be truthful when it is the marginal generator. 

Similar to Proposition \ref{th: cm_stack_allocated}, this proposition shows that if the leader is the marginal generator when bidding truthfully, then the leader is likely to bid 0 in a denser market or a market with relatively high demand, or if it has a very high qualified capacity, or if the demand is less elastic. Otherwise, it is more likely to remain the marginal generator. 

\begin{proposition}\label{th: cm_stack_unallocated}
If the leader $g=1$ is in the set $\mc{G}\sm\Ghat$ when bidding truthfully, then it always earns a higher revenue when bidding untruthfully. More specifically, the leader maximizes its revenue either by bidding 0 or by becoming the marginal supplier. It becomes the marginal supplier if $F^U_1 P^\max_1 \geq \frac{\Pi^{\max}-W_{\ghat}}{A} - \sum_{i\in\Ghat\sm\{\ghat\}} F^U_i P^\max_i$ and $\exists \bar{\Gset}$ such that $\forall \Ghat''\in\bar{\Gset}$, the corresponding $B > W_{\ghat'}$, and if either of the following is true:

(i) $\max_{\Ghat''\in\bar{\Gset}, B < W_{\gddot''}}B>\sqrt{A W_{\ghat'}F^U_1 P^\max_1}$;

(ii) $\max_{\Ghat''\in\bar{\Gset}} W_{\gddot''} \left(\frac{\Pi^\max-W_{\gddot''}}{A}-\sum_{i\in\Ghat''\sm\{1\}}F^U_i P^\max_i\right) > W_{\ghat'} F^U_1 P^\max_1$.
\end{proposition}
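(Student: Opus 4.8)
The plan is to parallel the analysis of Propositions \ref{th: cm_stack_allocated} and \ref{th: cm_stack_marginal}, exploiting that an unallocated leader starts from a revenue of $0$. First I would record the baseline: when the leader bids truthfully it lies in $\mc{G}\sm\Ghat$, so $q_1 = 0$ and its revenue is $0$ (equivalently, case (3) of Theorem \ref{th: profit_cm} gives profit $-W_1 F^U_1 P^\max_1$, hence revenue $0$). I would then argue that the leader has exactly two qualitatively distinct deviations: remain \emph{inframarginal} (bid below the clearing price and sell all qualified capacity) or become the \emph{marginal} supplier. For the inframarginal regime, Lemma \ref{th: cm_bid_0}(3) shows that bidding $0$ clears the market at a price $W_{\ghat'}\le W_{\ghat}$ with the leader selling its full $F^U_1 P^\max_1$ (using $h_1 = F^U_1 P^\max_1$ from Proposition \ref{th: cm_h_opt}), for a revenue $R_0 = W_{\ghat'} F^U_1 P^\max_1 > 0$. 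Since the set of generators priced strictly below $W_{\ghat'}$ is unchanged as the leader varies its bid anywhere below $W_{\ghat'}$, the clearing price, and hence $R_0$, is invariant across all inframarginal bids, so bidding $0$ is a canonical representative of this regime. As $R_0 > 0$, any such deviation already beats the truthful revenue of $0$, which proves the first assertion.

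Next I would treat the marginal regime. To become marginal the leader must bid below $W_{\ghat}$ yet retain enough capacity to reach the clearing quantity; pushing $w_1 \to W_{\ghat}^-$ shows this is possible only if $F^U_1 P^\max_1 \ge \frac{\Pi^\max - W_{\ghat}}{A} - \sum_{i\in\Ghat\sm\{\ghat\}} F^U_i P^\max_i$, which is exactly the stated capacity hypothesis (the right-hand side is the sold capacity of the original marginal supplier). Fixing a feasible updated allocated set $\Ghat''$, the greedy clearing rule gives the leader's sold quantity $q''_1 = \frac{\Pi^\max - w''_1}{A} - \sum_{i\in\Ghat''\sm\{1\}} F^U_i P^\max_i$, so its revenue is $w''_1 q''_1 = -\tfrac{1}{A}(w''_1 - B)^2 + \tfrac{B^2}{A}$, a downward parabola in $w''_1$ with vertex at the defined quantity $B = \tfrac12(\Pi^\max - A\sum_{i\in\Ghat''\sm\{1\}} F^U_i P^\max_i)$. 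Because the leader remains marginal only for $w''_1 < W_{\gddot''}$, maximizing over the feasible price range yields revenue $B^2/A$ when $B < W_{\gddot''}$ (vertex attainable), and otherwise the boundary value $W_{\gddot''}\bigl(\frac{\Pi^\max - W_{\gddot''}}{A} - \sum_{i\in\Ghat''\sm\{1\}} F^U_i P^\max_i\bigr)$ attained at $w''_1 = W_{\gddot''} - \epsilon$.

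Finally I would compare the best marginal revenue with $R_0$. In the regimes covered by (i)--(ii), the relevant optimal marginal price is $\min(B, W_{\gddot''} - \epsilon) \le B$, and the leader is only partially cleared when marginal, so $q''_1 \le F^U_1 P^\max_1$; consequently the marginal revenue is at most $B\,F^U_1 P^\max_1$, and it can beat $R_0 = W_{\ghat'} F^U_1 P^\max_1$ only when $B > W_{\ghat'}$, which is why this inequality is imposed for every candidate $\Ghat''$ in $\bar{\Gset}$. Within that regime, condition (i) is precisely $B^2/A > W_{\ghat'} F^U_1 P^\max_1$ rewritten as $B > \sqrt{A W_{\ghat'} F^U_1 P^\max_1}$ for the case $B < W_{\gddot''}$, and condition (ii) is the analogous boundary comparison for the case $B \ge W_{\gddot''}$; taking the maximum over all admissible $\Ghat''\in\bar{\Gset}$ accounts for the several allocated sets the leader may induce (the set $\Gset$ introduced before Proposition \ref{th: cm_stack_allocated}). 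If either holds for some $\bar{\Gset}$, the marginal deviation dominates bidding $0$ and the leader becomes marginal.

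The main obstacle I anticipate is the bookkeeping around $\bar{\Gset}$: establishing which displacements are simultaneously feasible so that each claimed $\Ghat''$ is actually realizable as an allocated set, verifying that the leader's capacity suffices to be marginal for each such $\Ghat''$, and confirming that the price interval $(\,\cdot\,, W_{\gddot''})$ used in the optimization is nonempty. The parabola optimization and the $R_0$ comparison are routine; the care lies in handling the combinatorics of feasible allocated sets and in making the invariance-of-clearing-price argument for the inframarginal regime airtight.
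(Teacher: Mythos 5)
Your proposal is correct and follows essentially the same route as the paper's proof: bidding 0 yields positive revenue $W_{\ghat'}F^U_1 P^\max_1$ (so any such deviation beats the truthful revenue of 0), the capacity hypothesis gates the marginal regime, and conditions (i) and (ii) are obtained by re-running the parabola analysis of Proposition \ref{th: cm_stack_allocated} with 0 treated as the baseline bid and $W_{\ghat'}$ as the baseline clearing price. Your local justifications differ only cosmetically (you derive the capacity condition as feasibility of ever being marginal, and the necessity of $B>W_{\ghat'}$ via the bound $w''_1 q''_1\le B\,F^U_1 P^\max_1$, whereas the paper notes that otherwise $W_{\ghat'}=W_{\ghat}$ leaves no profitable price in $(W_{\ghat'},W_{\ghat}]$); these arguments are equivalent.
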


The conditions in this result are very similar to those in Proposition \ref{th: cm_stack_marginal}, except for an additional lower bound on $F^U_1 P^\max_1$. Therefore, if the leader is unallocated when bidding truthfully, it is likely to bid 0 in a denser market or a market with relatively high demand, or if it has either a very high or very low qualified capacity, or if the demand is less elastic. Otherwise, it is more likely to become the marginal generator.

In contrast to the common belief that a supplier with a larger market share is more likely to be untruthful, our findings in Proposition \ref{th: cm_stack_allocated}-\ref{th: cm_stack_unallocated} indicate that the effect of qualified capacity on bidding behavior can be complicated by market dynamics and generator attributes. Moreover, a small qualified capacity could also induce strategic bidding behaviors.

We are also interested in how the other generators and the consumers are affected by the behavior of the leader. If the leader's strategic behavior (e.g. bidding 0) leads to a lower market clearing price, then the revenue for all other generators either decreases or remains the same. This is because when the market clearing price decreases, all previously unallocated and some of the previously allocated generators are unallocated and have no revenue. For the generators that remain allocated, they sell capacity at a lower price. On the other hand, the consumer surplus, which corresponds to the area of the triangle shaded with blue in Figure \ref{fig: updated_market_eq}, increases with a decrease in the market clearing price. Similarly, if the updated market clearing price increases, then the revenue of all other generators either increases or remains the same, and the consumer surplus decreases.

It has been observed that in the capacity markets of PJM and ISO-NE, the cleared capacity is often well above the target quantity \citep{cap_report_2018}. According to Proposition \ref{th: excess_cap}, such excessive capacity procurement is caused by a low market clearing price $\pi = W_{\ghat}$. In terms of our leader-follower setting, a drop in market clearing price happens when the leader is either marginal or unallocated, and maximizes its revenue by bidding a lower price than its net CONE. We also observe this in some of our numerical examples in Section \ref{ch: cm_experiment}, where leaders bid lower prices to increase their capacity market revenue. The resulting low clearing price increases consumer surplus, but it also decreases the revenue of some generators and thus may undermine power system reliability in the long run.

\begin{remark}[Market Power Mitigation]
In sum, in a market with high demand, there are more fully allocated generators, and thus a strategic generator is more likely to bid truthfully. In such a market, or if the market is dense or the demand is less elastic, a marginal or unallocated strategic generator is likely to bid 0, and thus the policy maker may need to enforce market power mitigation policy such as price floors (we will address this shortly) for some strategic generators with high net CONE to ensure truthful bidding. In contrast, when the market is sparse, has high redundant capacity, and with more elastic demand, the strategic generator is more likely to be a price setter. It is also more likely to become unallocated. In this case, the policy maker could impose both price floors and price caps for generators with relatively high net CONE, and price caps for low net CONE generators with a low qualified capacity.
\end{remark}


In practice, to mitigate the market power in capacity markets that leads to low bidding and market clearing prices, ISOs impose price floors for certain types of generators and require them to bid no less than their net CONE. Such rules, called Minimum Offer Price Rules (MOPR), are criticized by FERC as being discriminatory, and MOPR have undergone several reforms throughout the years. We refer interested readers to \cite{macey2021mopr} for more details on the history of MOPR in capacity market.
\subsection{Capacity Market and Physical Withholding}\label{ch: stack_both}
We now use the leader-follower game framework to investigate how the capacity market could mitigate market power in the energy market. Specifically, we focus on the type of market power in the energy market called {\it physical withholding}. By physical withholding, a generator holds back its available capacity in the energy market, which raises the market clearing price. Physical withholding is undesirable as it creates scarcity and undermines the reliability of power grids. 

In this section, we only consider a thermal generator acting as the leader, as the uncertain and non-dispatchable nature of renewable production makes it hard for a wind generator to bid strategically in the energy market. We assume the leader bids 0 in the capacity market, as this is usually the best strategy for the leader in our case study (see Section \ref{ch: cm_experiment}). Note that if instead the leader is truthful in the capacity market, then the results in this section are still valid as long as the leader is fully allocated in the capacity market. In addition, in this section we do not consider untruthful bids in marginal cost (in Section \ref{ch: combined_experiment} we provide numerical results without this restriction). In practice, ISOs impose market seller offer caps to limit the bidding prices in the energy market, so generators cannot raise the market clearing price with very high bids. We use the prime sign ($'$) to denote the outcome of the strategic setup. 

Since we assume the leader bids 0, without physical withholding the leader's capacity market revenue is $\pi^* P^\max_1$ (we drop $F^U_1$ as it equals 1 for thermal generators). If the leader seeks to increase its revenue in the energy market by physical withholding, it bids $h'_1<P^\max_1$ in the capacity market and gets allocated with $q^{*'}_1 = h'_1$. The corresponding revenue is $\pi^{*'}q^{*'}_1$ with $\pi^{*'} \geq \pi^*$.

In the energy market, by Theorem \ref{th: profit_em} when the leader is truthful its total profit from the energy market is 
$\sum_{t\in\That}(\lambda^*_{i(1),t} - C^V_1)P^\max_1$, where $\That$ is the set of time periods with $\lambda^*_{i(1), t} > C^V_1$, i.e., time periods such that the leader earns a positive profit. 

If we have a connected power network without congestion, i.e., power flow in all transmission lines are below their capacity, then energy market price does not decrease when there is withholding:
\begin{proposition}\label{th: em_lmp}
If the network is connected and without congestion, then the value of $\lambda^*_{it}$ is the same at all buses for each time period. If there is physical withholding by the leader, then $\lambda^{*'}_{i(1), t}\geq \lambda^{*}_{i(1), t}, \forall t\in\mc{T}$.
\end{proposition}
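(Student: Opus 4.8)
The statement has two parts. First I would establish that on a connected, congestion-free network, $\lambda^*_{it}$ is uniform across all buses $i$ at each fixed time $t$. Second I would show that physical withholding by the leader cannot decrease the price at the leader's bus. Let me sketch each.

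\textbf{Part 1 (price uniformity).} The plan is to work with the KKT/dual constraints of (EM). When no transmission line is congested, constraints \eqref{eq: em_3} and \eqref{eq: em_4} are slack, so by complementary slackness the associated dual multipliers vanish: $\zeta_{ijt} = 0$ and $\eta_{ijt} = 0$ for every line $(i,j)$. I would then examine the stationarity (dual feasibility) condition corresponding to the flow variable $f_{ijt}$ and to the voltage-angle variable $\theta_{it}$. The Ohm's-law constraint \eqref{eq: em_2} carries the free dual $\iota_{ijt}$, and the nodal balance \eqref{eq: em_1} carries $\lambda_{it}$. Taking the derivative of the Lagrangian with respect to $f_{ijt}$ gives a relation tying $\iota_{ijt}$ to $\lambda_{it} - \lambda_{jt}$ (with the transmission multipliers, which are now zero); the derivative with respect to $\theta_{it}$ forces the weighted sum of the $\iota$ terms to cancel. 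Combining these, and using connectedness to pass the equality $\lambda_{it} = \lambda_{jt}$ along any path between two buses, yields a common value $\lambda^*_t$ at every bus. This is essentially the classical statement that uncongested DCOPF produces a single system-wide marginal price; the only subtlety is bookkeeping the sign conventions from the dual-variable annotations in \eqref{eq: em_1}--\eqref{eq: em_4}.

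\textbf{Part 2 (monotonicity under withholding).} With a uniform price $\lambda^*_t$, the network collapses (for pricing purposes) to a single node: total supply must meet total demand $\sum_i D_{it}$, and the uniform price is set by the marginal unit. Physical withholding means the leader lowers its available capacity, replacing $P^\max_1$ by $h'_1 < P^\max_1$ in \eqref{eq: em_5}/\eqref{eq: em_7}, which only tightens the feasible region of the single-node dispatch. The plan is to argue that reducing one generator's capacity in a single-price market cannot lower the clearing price. Concretely, I would order thermal units by variable cost and view the uniform price as the cost of the marginal unit needed to cover fixed demand (capped by $\cvoll$, via \eqref{eq: em_kkt_3}, when load shedding occurs). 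Removing capacity from the leader weakly shifts the marginal unit to one that is at least as expensive, so $\lambda^{*'}_{t} \geq \lambda^{*}_{t}$; since both prices are uniform across buses, $\lambda^{*'}_{i(1),t} \geq \lambda^{*}_{i(1),t}$ follows. A clean way to formalize this is a parametric/LP-sensitivity argument: as the upper bound in \eqref{eq: em_5} decreases, the optimal value of (EM) is nondecreasing and its derivative with respect to that bound is exactly $-\alpha_{gt} \le 0$, so the relevant dual price is monotone.

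\textbf{Main obstacle.} I expect the genuine difficulty to be Part 2, and specifically making the monotonicity argument rigorous rather than merely intuitive. The merit-order picture is clean only because the price is uniform; translating ``tighten one capacity bound'' into ``marginal cost weakly rises'' requires either a careful merit-order/supply-stack argument or an LP-duality/monotone-comparative-statics argument, and one must handle the boundary case where withholding pushes the system into load shedding so that the price jumps to $\cvoll$ (still an increase, consistent with the claim). Part 1, by contrast, I expect to be routine once the congestion-free complementary slackness is invoked; the care there is purely in tracking the dual stationarity conditions for $f_{ijt}$ and $\theta_{it}$ with the sign conventions fixed by the paper's annotations.
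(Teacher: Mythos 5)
Your Part 1 is correct but takes a genuinely different route from the paper. The paper never touches the stationarity conditions of the $B$--$\theta$ formulation; it instead quotes a price decomposition from the shift-factor formulation, $\lambda^*_{it} = \hat{\lambda}^*_t + \sum_{(j,k)\in\mc{E}}B_{jk}(U_{ji}-U_{ki})(-\zeta^*_{jkt}+\eta^*_{jkt})$, and kills the congestion multipliers by complementary slackness, so uniformity is immediate. Your argument (stationarity in $f_{ijt}$ gives $\iota_{ijt}=\lambda_{it}-\lambda_{jt}$ once $\zeta^*_{ijt}=\eta^*_{ijt}=0$; stationarity in $\theta_{it}$ then gives $\sum_{j}B_{ij}(\lambda_{it}-\lambda_{jt})=0$ at every bus) is self-contained and avoids citing the shift-factor machinery, but be aware the final step is not literally ``passing equality along edges'': what you obtain is a weighted-Laplacian system in $(\lambda_{it})_i$, and you conclude either from its kernel being the constants on a connected graph, or by a maximum-principle argument at the bus with the largest price. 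Both routes buy the same conclusion; yours is more elementary, the paper's is shorter given the cited formula.

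For Part 2, your primary merit-order argument is essentially the paper's proof: the paper identifies the unique marginal generator $\gtilde$, shows the uniform price equals $C^V_{\gtilde}$ when $\gtilde$ is below capacity and lies in $[C^V_{\gtilde}, C^V_{\gddot}]$ (with $\gddot$ the cheapest idle unit) when it is at capacity, then does a case analysis on whether withholding changes the marginal unit, treating the most-expensive-unit/$\cvoll$ case separately --- exactly the supply-stack argument you sketch, including the load-shedding boundary case you flag. However, the ``clean way to formalize'' you propose is a genuine gap: the fact that the derivative of the optimal value of (EM) with respect to the capacity bound in \eqref{eq: em_5} is $-\alpha_{gt}\leq 0$ only shows that withholding weakly increases total dispatch cost. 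The proposition concerns $\lambda^*_{i(1),t}$, the dual of the balance constraint \eqref{eq: em_1}, and monotonicity of one constraint's dual in another constraint's right-hand side is a cross-sensitivity (submodularity of the value function jointly in demand and capacity) that plain LP sensitivity does not provide; in general LPs such duals need not be monotone. Here it holds precisely because of the uncongested merit-order structure, so the supply-stack case analysis is the actual proof rather than a dispensable intuition --- which is exactly how the paper carries it out.
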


We assume for now that the network is connected and without congestion, and we will discuss what happens without those assumptions later in this section. Since when the leader withholds capacity the new energy market clearing price $\lambdasitp \geq \lambdasit$ and thus $\That\subseteq\Thatp$. 
By withholding, the leader's profit increases because of increased energy market clearing prices. The market clearing price in the capacity market may also increase and raise the revenue. The total addition in capacity market revenue and energy market profit because of higher prices is as follows:
\begin{align}\label{eq: gain_0}
(\pi^{*'}-\pi^*)q^{*'}_1+\sum_{t\in\Thatp\sm\That}\left(\lambda^{*'}_{i(1),t}-C^V_1\right)(q^{*'}_1+v^{*'}_1)+\sum_{t\in\That}\left(\lambda^{*'}_{i(1), t}-\lambda^*_{i(1), t}\right)(q^{*'}_1+v^{*'}_1),
\end{align}
where the first term is the 
increased revenue in the capacity market. The second term is the increase from additional time periods with positive profits in the energy market. The third term is the additional revenue during previously profitable time periods in the energy market.

On the other hand, by withholding the leader could also lose some capacity market revenue and energy market profit because of decreased capacity, which can be calculated as:
\begin{align}\label{eq: loss_0}
\pi^*(P^\max_1-q^{*'}_1)+\sum_{t\in\That}(\lambda^*_{i(1),t}-C^V_1)(P^\max_1-q^{*'}_1-v^{*'}_1),
\end{align}
where the first term is the lost revenue in the capacity market, and the second term is the lost profit in previously profitable time periods.

In our case study (Section \ref{ch: combined_experiment}) we observe that $v^{*'}_1=0$ and $\pi^{*'}=\pi^*$ for all instances, indicating that the leader does not bid additional capacity in the energy market when withholding, and that withholding does not affect capacity market clearing price. Assuming those are always true, we simplify \eqref{eq: gain_0} and \eqref{eq: loss_0} respectively to $\gain$ and $\loss$, as defined below:
\begin{subequations}
    \begin{alignat}{4}
    & \gain := \sum_{t\in\Thatp\sm\That}\left(\lambda^{*'}_{i(1),t}-C^V_1\right)q^{*'}_1+\sum_{t\in\That}\left(\lambda^{*'}_{i(1), t}-\lambda^*_{i(1), t}\right)q^{*'}_1\\
    & \loss := \pi^*(P^\max_1-q^{*'}_1)+\sum_{t\in\That}(\lambda^*_{i(1),t}-C^V_1)(P^{\max}_1-q^{*'}_1).
    \end{alignat}
\end{subequations}
When there is no capacity market, the loss is instead $\lossbar := \sum_{t\in\That}(\lambda^*_{i(1),t}-C^V_1)(P^\max_1-q^{*'}_1)$.

Without the capacity market, the leader withholds its capacity if $\gain > \lossbar$, and the capacity market reduces such withholding if $\gain < \loss$. Therefore, the capacity market is more effective at reducing withholding if the revenue $\pi^*(P^\max_1-q^{*'}_1)$ is high. In particular, $\pi^*$ is high when the peak load is relatively high compared with total available capacity, while a high $P^\max_1-q^{*'}_1$ indicates that a large capacity is withheld. We summarize this result in the following remark:
\begin{remark}\label{rm: cm_withhold}
The capacity market is more effective at reducing physical withholding in the energy market when the system has lower redundant capacity at the peak hour, or when the withheld capacity is higher. 
\end{remark}

Physical withholding can be particularly disruptive if a generator withholds a large amount of capacity in a system that has little backup capacity. Our findings underscore the effectiveness of capacity markets in mitigating withholding during those challenging circumstances.

When there is congestion in the network, it is observed that with a small increase in load the market clearing price at certain buses could decrease. However, in general when there is an increase in load or a decrease in available capacity, the market clearing prices usually do not decrease and thus $\lambda^{*'}_{i(1), t}\geq \lambda^*_{i(1),t}$ is still usually true. In addition, congestion or disconnection in a network reduces available capacity at some buses, which could lead to higher $\lambda^{*'}_{i(1), t}$ and increased $\gain$, and thus higher revenue from the capacity market is needed to prevent withholding.


The capacity market also becomes less effective at mitigating market power if withholding creates more time periods with load shedding, as the value of $\gain$ can increase drastically when the energy market price equals $\cvoll$. In particular, since congestion could reduce available capacity and create more scarcity, there could be more time periods with load shedding if withholding happens in a congested network. Therefore, when combined with measures that mitigate congestion and loading shedding, the capacity market can be more effective in reducing physical withholding.

Finally, note that since $\cvoll$ is higher in energy-only market compared with capacity market (\$9,000/MWh in ERCOT versus \$3,500/MWh in Midwest ISO \citep{caiso_scarcity}), the energy-only market is more vulnerable to physical witholding, as the leader can earn higher profits by creating time periods with load shedding, without concerns about revenue loss in the capacity market.



\subsection{Trilevel Optimization for the Leader-Follower Game}\label{ch: trilevel}
We now propose an optimization model that facilitates large-scale case studies of market power, which allows us to numerically illustrate and extend the analytical results in Section \ref{ch: stack_cm} and Section \ref{ch: stack_both}. More specifically, we propose a trilevel optimization model for a leader-follower game in joint capacity and energy markets, which is based on the capacity market and energy market models developed in Section \ref{ch: model} and Section \ref{ch: combined_market}. In the upper-level, the leader maximizes its profit in both markets. The middle level and lower level are respectively the ISO's social welfare maximization problem in the capacity market and the energy market. Note that when modeling market power in the capacity market, we are constrained to using the QC model \eqref{eq: cm_qc}, while the MIP model \eqref{eq: cm_mip} is not suitable for this application, as the QC model's convexity enables us to represent the market equilibrium using KKT conditions in the middle level. In what follows, we present the leader's and follower's problems in both capacity and energy markets.

{\it Leader's problem in the capacity market }(upper-level): In the capacity market, the leader decides on its offer capacity $h_1$ and offer price $w_1$. Since the cleared capacity $r$ is determined in the follower's problem and is not a decision made by the leader, we use the notation $\bar{r}$ to highlight the fact that it is treated as a constant in the leader's problem. Similarly, the sold capacity of the leader ($q_1$) is also an external decision for the leader's problem. We formulate the leader's profit-maximization problem as follows. Note that since the leader's total cost in the capacity market is a constant $W_1 F^U_1 P^\max_1$, we drop it from the objective:
\begin{subequations}\label{eq: cm_leader}
\begin{alignat}{4}
\hspace{-1cm}\text{(CM-Leader):} \max~~&(- A \bar{r} + \Pi^\max)\bar{q}_1 \label{eq: cm_leader_0} \\
\st~~& h_1 \leq F^{U}_1 P^\max_1\label{eq: cm_leader_1}\\
& h_1, w_1 \geq 0 \label{eq: cm_leader_3},
\end{alignat}
\end{subequations}
where $- A \bar{r} + \Pi^\max$ is from the demand function and equals the market clearing price of the capacity market.

{\it Leader's problem in the energy market }(upper-level): In the energy market, the leader maximizes its profit by choosing its offer capacity $v_{1}$ and offer price $c_1$, as shown below:
\begin{subequations}\label{eq: em_leader}
\begin{alignat}{4}
\text{(EM-Leader):}\max~~&\sum_{t\in\mc{T}} (\bar{\lambda}_{i(1), t} - C_1^v) \bar{p}_{1t}\label{eq: em_leader_0}\\ 
\st~~& \bar{q}_1 + v_{1}\leq P_1^\max\label{eq: em_leader_1}\\
& c_1 \leq \cvoll -\epsilon \label{eq: em_leader_2}\\
& c_1,v_{1} \geq 0.\label{eq: em_leader_3}
\end{alignat}
\end{subequations}

Note that for the leader, variables $\lambda_{i(1), t}$, $p_{1t}$, and $q_1$ are external decisions. We explicitly impose the price cap in \eqref{eq: em_leader_2}, which prevents the leader from bidding an unrealistically high price $c_1$ when the load is very high. $\epsilon$ in \eqref{eq: em_leader_2} is a very small number and we use $\epsilon = 1$ in our experiments.

{\it Follower's problem in the capacity market }(middle-level): The follower's problem in the capacity market models market clearing, and it takes the leader's decisions as given. To formulate the problem, we modify the (CM-QC) formulation by treating $h_1$ and $w_1$ as constants. In addition, all non-strategic generators, which are modelled as a competitive fringe, offer their entire qualified capacity and thus, as indicated by Proposition \ref{th: cm_h_opt}, the social welfare is maximized:
\begin{subequations}\label{eq: cm_follower}
\begin{alignat}{4}
\text{(CM-Follower):} \max~~& - \frac{A}{2} r^2 + (\Pi^\max - \bar{w}_1) q_1 + \sum_{g\in\mc{G}\sm\{1\}} (\Pi^\max - W_g) q_g\hspace{-1.5cm}\label{eq: cm_follower_0}\\
\st~~& \eqref{eq: cm_qc_1}\\
& 0 \leq q_g \leq F^U_g P_g^\max&&  \forall g\in\mc{G}\sm\{1\} \\
& 0\leq q_1\leq  \bar{h}_1. && \label{eq: cm_follower_3}
\end{alignat}
\end{subequations}


{\it Follower's problem in the energy market }(lower-level): The follower's problem in the energy market again takes the ISO's perspective, and treats the leader's decisions and the capacity market outcome as given. It is adapted from the (EM) formulation by treating $c_1$, $q_1$, and $v_{1}$ as constants, and letting all non-strategic generators offer their entire qualified capacity. As explained in Section \ref{ch: stack_both}, we only consider thermal generators as the leader in the energy market:
\begin{subequations}\label{eq: em_follower}
\begin{alignat}{4}
\text{(EM-Follower):} \min~& \sum_{t\in\mc{T}}\left(\bar {c}_1 p_{1t} + \sum_{g\in\Gt\sm\{1\}} C^V_g p_{gt}+\sum_{i\in\mc{N}}\cvoll \punmet\right)\hspace{-2.5cm}\label{eq: em_follower_0}\\
\st~& \eqref{eq: em_1}-\eqref{eq: em_4}, \eqref{eq: em_8}, \eqref{eq: em_9}\\
& p_{1t} \leq \bar{q}_1 + \bar{v}_{1} &~~&\forall t\in\mc{T} \label{eq: em_follower_5} \\
& p_{gt} \leq P_g^\max &~~&\forall g\in\Gt\sm\{1\}, t\in\mc{T} \label{eq: em_follower_6}\\
& p_{gt} = F^{CF}_{gt} P^{\max}_{gt} &&\forall g\in\Gr, t\in\mc{T}.
\end{alignat}
\end{subequations}

We formulate the trilevel model for the leader-follower game by putting together \eqref{eq: cm_leader} - \eqref{eq: em_follower}: The upper-level maximizes the leader's total profit by combining the leader's problems \eqref{eq: cm_leader} and \eqref{eq: em_leader}; The middle-level is the ISO's market equilibrium problem for the capacity market (CM-Follower); The lower-level is the ISO's market equilibrium problem for the energy market (EM-Follower). 

To solve the trilevel model, we reformulate it as a mathematical program with equilibrium constraints (MPEC) problem. In the MPEC reformulation, the upper-level model is kept in its original form, while the middle- and lower-level models are replaced with their KKT conditions:
\begin{subequations}\label{eq: trilevel_stackelberg}
\begin{alignat}{4}
\max~~&\eqref{eq: cm_leader_0} + \eqref{eq: em_leader_0}\\
\st~~& \eqref{eq: cm_leader_1}, \eqref{eq: cm_leader_3}, \eqref{eq: em_leader_1} - \eqref{eq: em_leader_3}\\
& \text{KKT conditions of (CM-Follower) and (EM-Follower)}.
\end{alignat}
\end{subequations}


MPEC is NP-hard due to the nonconvex complementary slackness constraints in KKT conditions, which make it challenging to directly solve large-scale instances such as our NYISO case study using commercial solvers. Instead of solving \eqref{eq: trilevel_stackelberg} directly, we reformulate it into simpler structures that are readily handled by a state-of-the-art bilevel optimization software package. More specifically, we reformulate and decompose the trilevel MPEC model to 2 bilevel models and solve them with BilevelJuMP.jl \citep{diasgarcia2022bileveljump}. Note that to solve the bilevel problem, the BilevelJuMP.jl package uses several ways to reformulate the complementary slackness constraints. We choose the SOS1 mode in the package, which uses special ordered set of type one (SOS1) for reformulation. 

To reformulate the trilevel model to bilevel models, we merge its middle level and lower level. The merge is complicated by the middle-level variable $q_1$ because it is treated as given in the lower level. In what follows, we make $q_1$ an upper-level decision by leveraging our insights into the capacity market, enabling the merge of the middle and lower levels.


As we explain in Section \ref{ch: stack_cm}, there are only two possible outcomes in the capacity market with a revenue-maximizing leader: the leader either bids $w_1=0$, or it becomes the marginal supplier. Therefore, the value of $q_1$ can be controlled by the leader's decision in the upper-level. By reformulating (CM-Leader) to reflect one of the two possible decisions of the leader, $q_1$ essentially becomes an upper-level variable.

More specifically, when the leader bids $w_1 = 0$, it gets fully allocated, and we can add constraints $w_1=0$ and $q_1=h_1$ to (CM-Leader).  On the other hand, if the leader is the marginal supplier, we add the following constraints to (CM-Leader):
\begin{subequations}\label{eq: bilevel_case_2}
    \begin{alignat}{4}
    &w_1=-A\bar{r}+\Pi^\max\label{eq: bilevel_case_2_1}\\
    &q_1 = \bar{r}-\sum_{g\in\G\sm\{1\}}\bar{q}_g\label{eq: bilevel_case_2_2}\\
    &q_1\leq h_1\label{eq: bilevel_case_2_3},
    \end{alignat}
\end{subequations}
where \eqref{eq: bilevel_case_2_1} is the demand curve with the market clearing price equals $w_1$. \eqref{eq: bilevel_case_2_2} defines the allocated capacity $q_1$. \eqref{eq: bilevel_case_2_3} is the upper bound of $q_1$. 

With the reformulation of (CM-Leader), $q_1$ becomes an upper-level variable. Subsequently, we can replace $\bar{q}_1$ with $q_1$ in \eqref{eq: em_leader_1}, replace $q_1$ with $\bar{q}_1$ in \eqref{eq: cm_follower_0}, and remove the redundant constraints \eqref{eq: cm_follower_3} from (CM-Follower). Now the middle- and lower-level problems can be merged. To find the leader's optimal strategy, we solve the bilevel models for both possible outcomes in the capacity market, and the model with the higher optimal objective provides the optimal strategy.

Note that the bilevel reformulations can still be time-consuming to solve for some instances. Inspired by \cite{kleinert2021closing}, we derive a valid inequality based on weak duality to speed up the solution of the bilevel models. Different from the linear valid inequality in \cite{kleinert2021closing}, our valid inequality contains nonlinear terms, as it directly enforces (EM-follower)'s bilinear primal objective  \eqref{eq: em_follower_0} to be no less than its dual objective. This valid inequality is helpful in closing optimality gaps for a few instances in our case study. In Table \ref{tab: ineq_time}, we provide a comparison of computation time without and with valid inequalities (No ineq. and With ineq., respectively) for selected instances in the case study, which shows that the valid inequality can be helpful in speeding up the computation and even in closing the optimality gap for some instances. We refer the readers to Section \ref{ch: experiment} for details of those instances and experiment setup. 
\section{NYISO Case Study}\label{ch: experiment}
We have shown that the capacity market is susceptible to market power, and that it can mitigate physical withholding in the energy market. We now conduct a case study based on the NYISO system~\citep{liang2022operation} to illustrate those results. Furthermore, the case study facilitates a comparison of the diverse outcomes under various types of leaders and demand levels. We also experiment with additional strategic setups in the energy market, such as untruthful offer price and a highly congested network, and investigate the role of capacity market in such settings.

 We use a zonal representation of the NYISO system with 12 zones, which is used in practice for computing energy prices. In the data, each zone is represented by a bus and there are 13 transmission lines in the network. The system is populated with 362 thermal generators and 33 wind farms. The load and wind data are from a day in August. All optimization models are solved using Gurobi 10.0 \citep{gurobi} with the Julia JuMP \citep{DunningHuchetteLubin2017} interface. We use a MacBook Pro with Apple M1 Pro chip, which has an 8-core CPU and 16 GB memory. 
\subsection{Case Study: Market Power in the Capacity Market}\label{ch: cm_experiment}
In this section, we focus on market power in the capacity market. We show the leader types and demand levels that make the capacity market susceptible to market power. Furthermore, we assess the impact of market power by comparing various metrics (including generators' revenue, consumer surplus, and market clearing price) under scenarios both with and without market power.

With market power, we pick different types of generators as the leader, including natural gas (NG), coal, nuclear, residual fuel oil (RFO), hydro, wood, and wind. In Table \ref{tab: cm_lead_para}, we provide the marginal cost, the net CONE, and the qualified capacity of the leaders. Note that in practice, the net CONE values in different ISOs vary depending on parameter assumptions. For example, wind has a low net CONE in ISO-NE and a high net CONE in PJM. Our net CONE values are calculated based on our data. We observe that RFO is the peaker with the highest variable cost, and it also has the highest net CONE, while wind has 0 variable cost, and its net CONE is also 0. 

\begin{table}[htpb]
\centering
\caption{Parameters of the Leaders}
\label{tab: cm_lead_para}
{\small
\begin{tabular}{lrrrrrrr}
\hline
\multicolumn{1}{c}{} & \multicolumn{1}{c}{NG} & \multicolumn{1}{c}{Coal} & \multicolumn{1}{c}{Nuclear} & \multicolumn{1}{c}{RFO} & \multicolumn{1}{c}{Hydro} & \multicolumn{1}{c}{Wood} & \multicolumn{1}{c}{Wind} \\ \hline
$C^V_g$(\$/MWh)                  & 21.1                   & 13.1                     & 4.1                         & 67.6                    & 14.9                      & 35.0                     & 0.0                      \\
$W_g$ (\$/MW-day)                    & 199.6                  & 540.2                    & 810.6                       & 1246.5                  & 420.9                     & 752.1                    & 0.0                      \\
$F^U_g P^\max_g$ (MW)         & 621.0                  & 655.1                    & 1299.0                      & 901.8                   & 250.0                     & 42.1                     & 35.9                     \\ \hline
\end{tabular}}
\end{table}

For different types of generators and various demand levels, we solve MPEC models of the leader-follower game, which are similar to problem \eqref{eq: trilevel_stackelberg} except that they only include capacity market decisions and constraints. For comparison, we also solve (CM-QC) for the setting without market power with the same set of parameters. All instances are solved to optimality within 6 minutes.

\begin{figure}[htbp]
\centering
\begin{subfigure}[t]{0.5\textwidth}
\centering
\includegraphics[width = 0.8\textwidth]{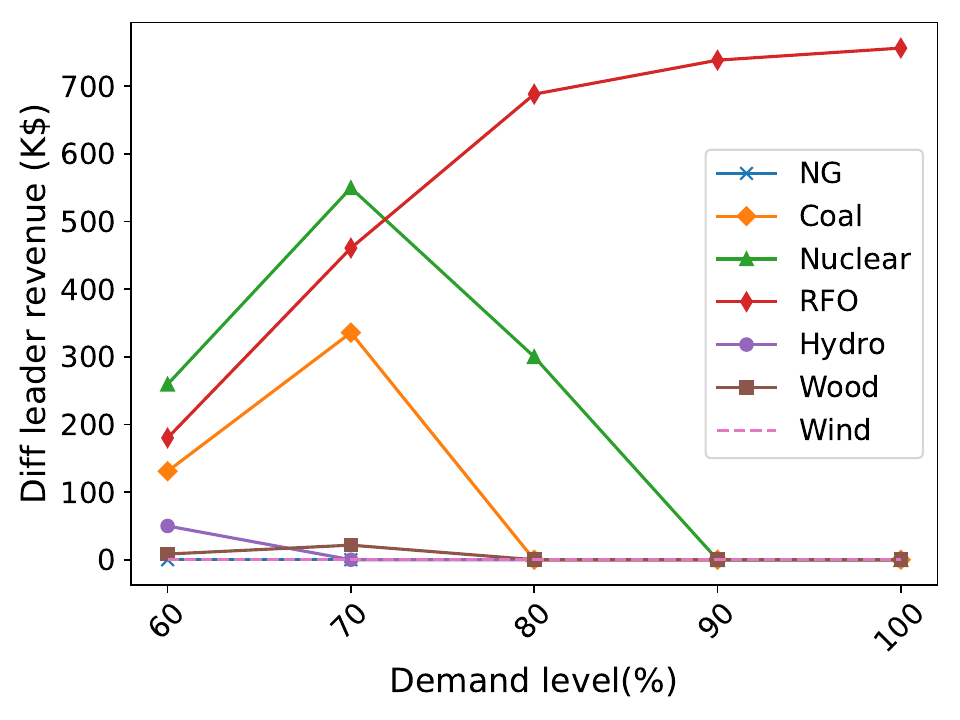}
\caption{}\label{fig: cm_leader_rev}
\end{subfigure}
\hfill
\begin{subfigure}[t]{0.5\textwidth}
\centering
\includegraphics[width = 0.8\textwidth]{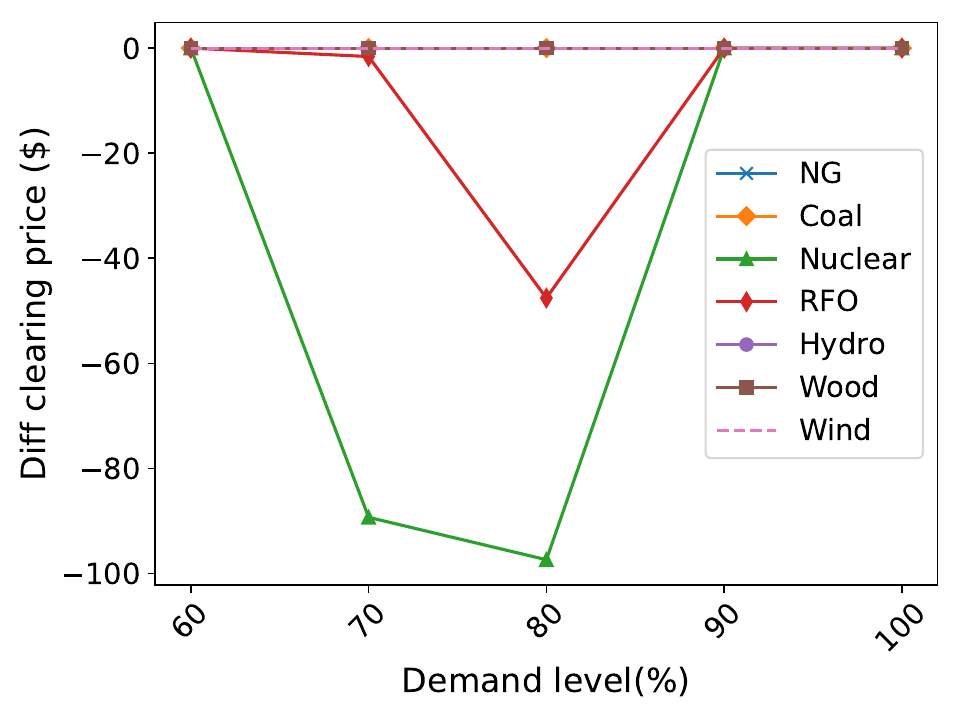}
\caption{}\label{fig: cm_clearing_price}
\end{subfigure}
\caption{\small Difference between strategic setting and truthful setting (the former minus the latter) in the capacity market for (a) leader's capacity market revenue, (b) market clearing price.}
\label{fig: cm_leader+price_diff}
\end{figure}

Figure \ref{fig: cm_leader+price_diff} displays comparisons between the 2 settings for the leader's revenue and the market clearing price. The horizontal axes represent varying levels of demand, from 60\% to 100\% of the original peak demand, increasing in steps of 10\%. Figure \ref{fig: cm_leader_rev} illustrates the difference in leader's revenue between strategic and truthful settings. The generators with higher qualified capacity and higher net CONE (e.g., RFO, Nuclear, Coal) are more likely to benefit from exercising market power. Figure \ref{fig: cm_clearing_price} shows that when the leader exercises market power, the market clearing price either decreases or stays unchanged. If the price is unchanged, it is because the leader breaks ties with other generators having the same net CONE by marginally undercutting them. We observe that nonzero difference, which indicates altered market outcome due to market power, happens less often when the demand is higher, and when the leader has lower net CONE. This can be explained by our results in Section \ref{ch: stack_cm}: generators are more likely to bid truthfully when they are fully allocated, and thus high demand and low net CONE facilitates truthful bidding.

Lemma \ref{th: cm_bid_0} indicates that if the leader is a marginal supplier, the market clearing price could either increase or decrease when the leader bids 0, and the price decreases when the leader has a large qualified capacity. Our observation in the case study is consistent with this result: The nuclear plant has a large qualified capacity and is marginal when demand level is 80\%, and the market clearing price decreases when it bids 0. 

\begin{figure}[htbp]
\centering
\begin{subfigure}[t]{0.5\textwidth}
\centering
\includegraphics[width = 0.8\textwidth]{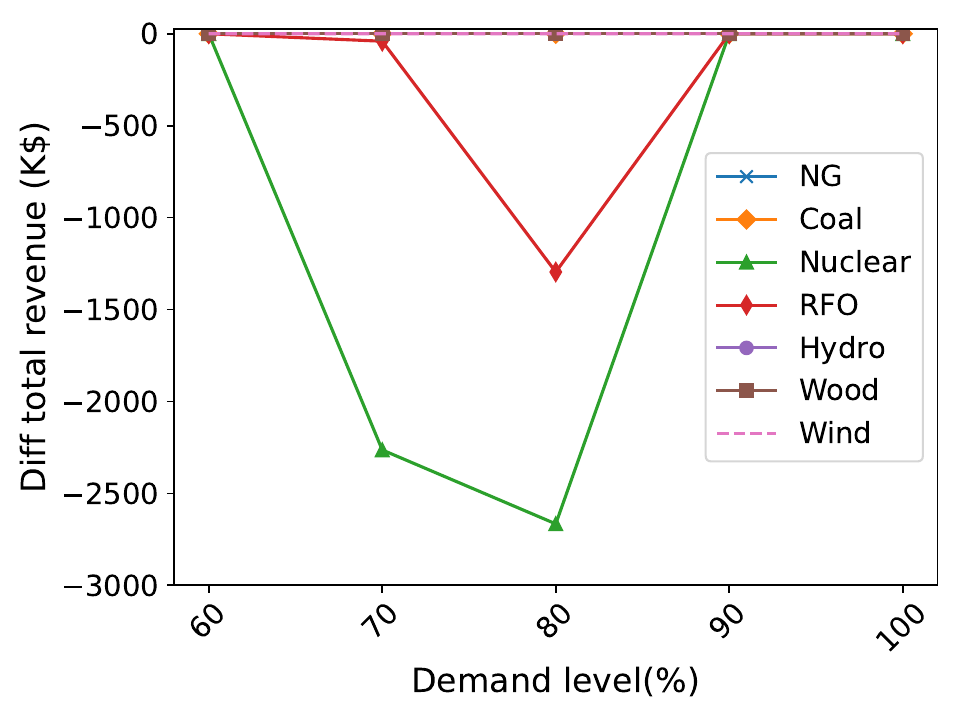}
\caption{}\label{fig: cm_total_rev}
\end{subfigure}
\hfill
\begin{subfigure}[t]{0.5\textwidth}
\centering
\includegraphics[width =0.8\textwidth]{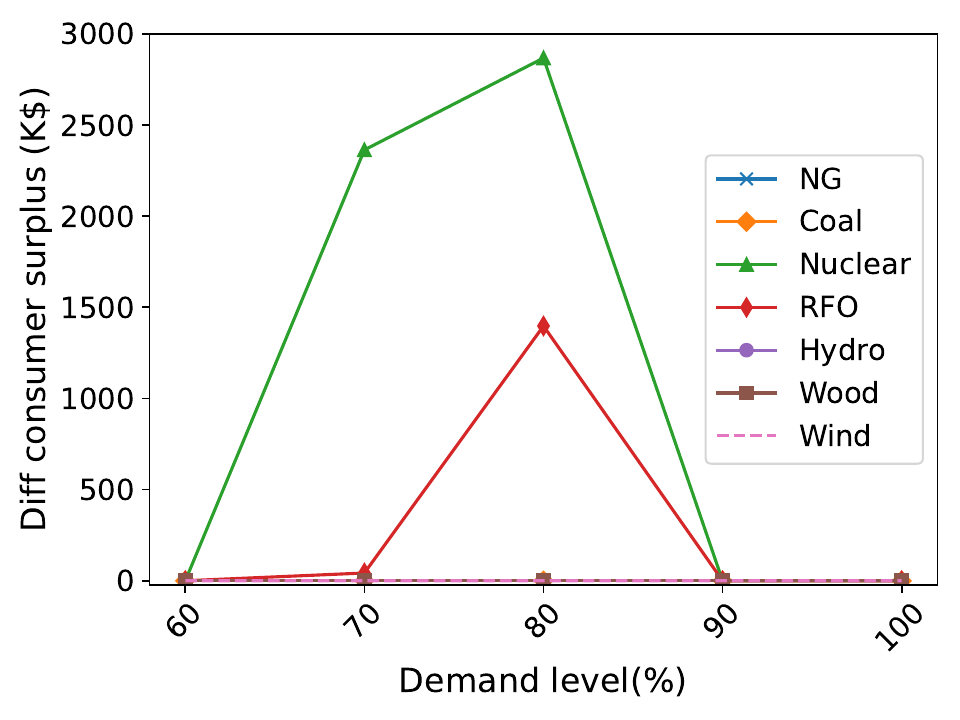}
\caption{}\label{fig: cm_con_spl}
\end{subfigure}
\caption{\small Difference between strategic setting and truthful setting (the former minus the latter) in the capacity market for (a) total revenue of generators, (b) consumer surplus.}
\label{fig: cm_total_rev_con_spl}
\end{figure}

Figure \ref{fig: cm_total_rev} shows the difference in generators' total revenue, while Figure \ref{fig: cm_con_spl} shows the difference in consumer surplus. Both values are directly impacted by the market clearing price and thus show the same pattern of change as Figure \ref{fig: cm_clearing_price}. When the nuclear plant is the leader, it tends to change total revenue and consumer surplus more than other types of leaders. This is probably because the nuclear plant has very large capacity and thus has a larger impact when exercising market power.




\begin{figure}[hbpt]
  \centering
  \includegraphics[width=0.4\textwidth]{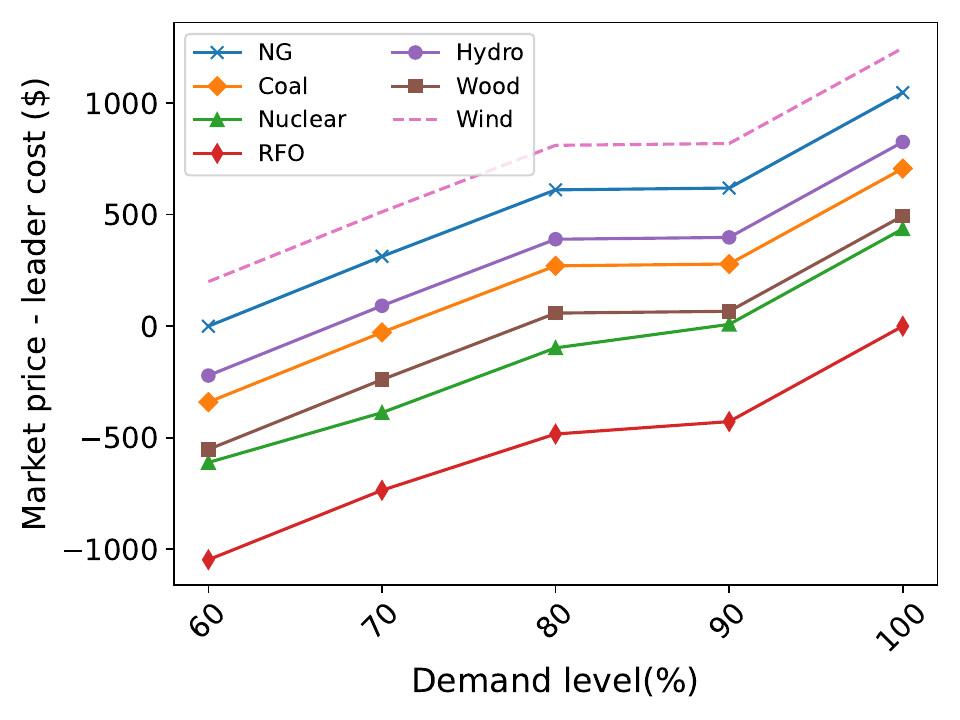}
  \caption{Capacity market clearing price minus leader's net CONE in strategic setting.}
  \label{fig: cm_cone_stack}
\end{figure}

Figure \ref{fig: cm_cone_stack} shows the difference between the market clearing price and the leader's net CONE in the strategic setting, for different leaders and demand levels. Since in our results leaders are always fully allocated when exercising market power, a nonnegative difference indicates that the leader is profitable. Generators with a low net CONE (e.g. wind and NG) are always profitable even if they are not strategic, while the peaker (RFO) is only profitable when the demand is very high. 


Our results suggest that the ISO needs to be vigilant about strategic bids from generators with high net CONE, especially if those generators also have high qualified capacities. When the peak load is low, market mitigation policy (such as price floors) targeting those generators could be used to prevent them from bidding low prices and distorting the market outcome.

\subsection{Case Study: Capacity Market and Market Power in Energy Market}\label{ch: combined_experiment}

We now demonstrate the capacity market's impact on the leader's strategic behavior in the energy market. We measure the market power by comparing the leader's profit from the energy market under different market settings. More specifically, we compare the following settings:

(1) ``S. Both": Strategic setting with both capacity and energy markets.

(2) ``noCM": Strategic setting with only the energy market.

(3) ``True": Truthful setting with both capacity and energy markets.

Similar to the previous section, for each setting we compute the leader's energy market profit for different types of leaders and demand levels. We also consider two congestion levels, where the high congestion level reduces the original transmission line limit (i.e., the normal congestion) by one third. All experiments in this section are run with a 20-minute time limit.

First, we focus on physical withholding and do not consider untruthful bids in marginal cost, which is the same setup as in our analysis for Section \ref{ch: stack_both}. We show the results in Figure \ref{fig: combined_em_profit} for the normal congestion level. We consider different types of thermal generators as the leader, along with demand levels ranging from 60\% to 120\% of original peak load, with increments of 10\%. Figure \ref{fig: combined_both_noCM} reports the difference in leader's profit under ``S. Both" and ``noCM" settings (``S. Both vs. noCM"), which shows the extra profit the leader gets by exercising market power when the capacity market is absent. Figure \ref{fig: combined_stack_true} reports the difference in leader's profit under ``S. Both" and ``True" settings (``S. Both vs. True"), where a non-zero value indicates that the capacity market is unable to completely eliminate all market power. In Section \ref{ec: tables} of the e-companion, we present Table \ref{tab:diff_profit_1} which includes detailed data corresponding to Figure \ref{fig: combined_em_profit}, as well as the results for the same comparisons at a high congestion level.

\begin{figure}[htbp]
\centering
\begin{subfigure}[t]{0.5\textwidth}
\centering
\includegraphics[width = 0.8\textwidth]{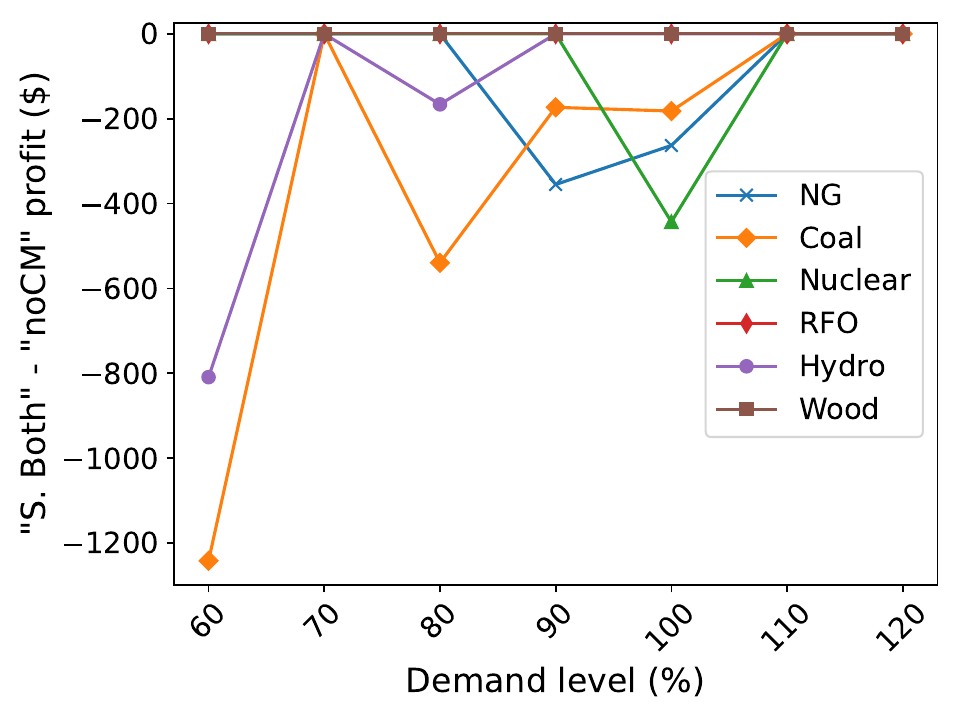}
\caption{}\label{fig: combined_both_noCM}
\end{subfigure}
\hfill
\begin{subfigure}[t]{0.5\textwidth}
\centering
\includegraphics[width = 0.8\textwidth]{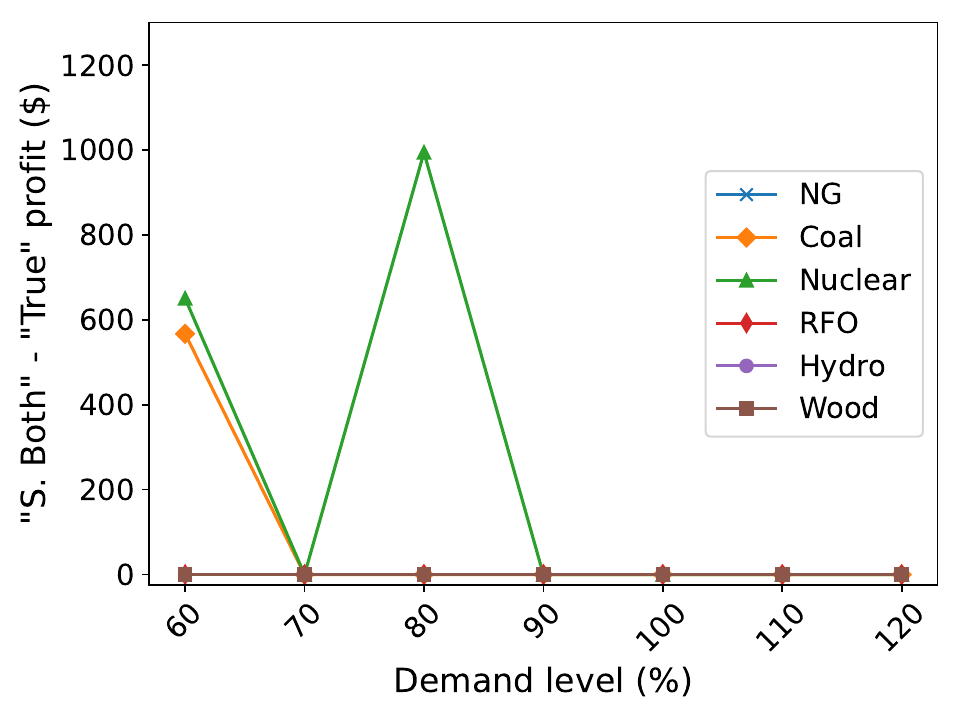}
\caption{}\label{fig: combined_stack_true}
\end{subfigure}
\caption{\small Comparison of the leader's profit from the energy market (with normal congestion) between the following settings (the former minus the latter): (a) Under strategic setting, joint capacity and energy markets vs. only energy market. Non-zero values indicate reduced physical withholding due to capacity market; (b) For joint capacity and energy markets, strategic setting vs. truthful setting. Non-zero values indicate physical withholding.}
\label{fig: combined_em_profit}
\end{figure}

Note that for one of the instances, namely the instance with Hydro being the leader at an 80\% demand level and under the high congestion ``noCM" setting, we use the weak duality-based valid inequality to help close the optimality gap (see Table \ref{tab: ineq_time} in the e-companion). In addition, there are 3 instances (noted with $^*$ in Table \ref{tab:diff_profit_1}) not solved to optimality under ``noCM" setting with high congestion, which lead to possibly underestimated differences between ``S. Both" and ``noCM" settings. Note that for those suboptimal instances, the optimality gaps at termination are usually small. Also, the solver usually quickly identifies high-quality solutions for our instances, with slow convergence mainly due to difficulty in bound verification.

As shown in Figure \ref{fig: combined_both_noCM}, with the capacity market, the leader gets a lower profit from the energy market, which indicates a reduced level of physical withholding. In addition, Figure \ref{fig: combined_stack_true} shows that for most cases, the existence of the capacity market ensures that the leader bids truthfully in capacity in the energy market. The capacity market is more effective in ensuring truthful bidding at higher demand levels, where the leader earns higher capacity market revenue. This observation is consistent with our finding in Remark \ref{rm: cm_withhold}. 

With more congestion in the network, physical withholding becomes more frequent if there is no capacity market, as shown in Table \ref{tab:diff_profit_1}. Also, the leader earns a higher profit via withholding, and more expensive generators, such as Wood and RFO, start to exercise market power. The capacity market is still effective in preventing many cases of withholding.

Next, we relax the assumption that the leader bids truthfully in marginal cost. We include the results in Table \ref{tab:diff_profit_2} of the e-companion, which show that the leader makes more profit by exercising market power. Again, with more congestion the problem of market power becomes more prominent. While the capacity market can still reduce market power, it becomes less effective. 

In sum, the capacity market is effective in reducing market power, especially physical withholding, in the energy market. Yet it does not eliminate market power. Additional measures, such as reducing congestion and incentivizing truthful bids in marginal cost, can be helpful as well.

\section{Conclusion}\label{ch: conclusion}
In this work, we assess the capacity market's impact on generators' revenue and system reliability via a rigorous analytical framework, featuring a new perspective that considers revenue in joint capacity and energy markets, novel equilibrium-based and game-theoretic models depicting market clearing and market power, and innovative solution technologies for large-scale case studies. By utilizing this framework, our analysis delivers valuable insights to both market participants and regulators. We show that the capacity market's effectiveness in enhancing system reliability depends on various factors, including the net CONE of generators, the demand level and elasticity, and the density of the market. Our results confirm that the capacity market is helpful in ensuring resource adequacy. Furthermore, we recommend additional measures that could mitigate market power and aid the capacity market in supporting optimal generation portfolio. 

There are several promising directions for future research. While we have not explored the implications of additional capacity market features, such as the interaction between different capacity auctions, the impact of transmission congestion, and the incorporation of energy storage, it remains essential to understand them. Our approach and findings provide a foundation upon which future endeavors can build to study those features. More broadly, there are a multitude of electricity market policies used in practice, and subjecting them to thorough economic analysis without over-simplifications can greatly enhance our understanding of their implications. By introducing an analytical framework for the capacity market and demonstrating its usefulness, we view our efforts as a step towards this goal. Finally, further research on mechanism design that incentivizes optimal investment and allocation is needed, as it is crucial to the long-term sustainability of electricity markets, as well as other markets characterized by substantial upfront investments. 
\bibliographystyle{informs2014} 
\begingroup
    \setlength{\bibsep}{-1.5pt}
    \linespread{1}\selectfont
\bibliography{reference}
\endgroup

\ECSwitch


\ECHead{e-Companion}

\section{Greedy Algorithm for Clearing the Capacity Market}\label{ch: greedy}
\begin{algorithm}[H]
\SetAlgoLined
{\bf Initialization}: Sort the generators in an ascending order of $W_g$. Let $q = h_1$ and $g = 1$\;
\While{$\frac{\Pi^\max - W_{g}}{A}> q$ \bf{and} $g < |\mc{G}|$}{
	$g = g + 1$\;
	$q = q + h_g$\;
}
\uIf{$\frac{\Pi^\max - W_{g}}{A} \leq q$}{
$\pi^* = W_g$\;
$r^* = \frac{\Pi^\max - W_{g}}{A}$\;
$\ghat = g$\;
$g'=1$\;
    \For{$g'\leq |\G|$}{
        \uIf{$g' < g$}{
        $q_{g'} = h_{g'}$\;}
         \uElseIf{$g' = g$}{
        $q_{g'} = r^* - \sum_{i=1}^{g'-1} h_i$\;}
         \Else{
         $q_{g'} = 0$\;}
         $g' = g'+1$\;}
 {\bf return} $\ghat$, $\pi^*$ and $q_i, \forall i\in\G$\;
 }
\Else{
The capacity market fails to clear\;}
\caption{Greedy Algorithm for Clearing the Capacity Market}\label{alg: greedy}
\end{algorithm}
If the market clears, this algorithm returns the index of the marginal supplier, the market clearing price, and the sold capacity of all suppliers.
\section{Proofs}\label{ec: proofs}
\begin{repeattheorem}[Proposition \ref{th: excess_cap}.]
The proportion that the market clearing quantity exceeds the capacity requirement $\frac{r^* - \Qcap}{\Qcap} = (1 - \frac{W_{\ghat}}{\Ccone})F^E \geq 0$. In particular, if the marginal supplier $\ghat$ is the peaker, then $r^* = \Qcap$.
\end{repeattheorem}
\proof{Proof. of Proposition \ref{th: excess_cap}}
We can directly calculate the proportion: 
\begin{align*}
    \frac{r^* - \Qcap}{\Qcap} &= \frac{\Pi^\max - W_\ghat}{A\Qcap} - 1\\
    & = \frac{(\frac{1+F^E}{F^E}\Ccone- W_\ghat)F^E \Qcap}{\Qcap\Ccone} - 1\\
    & = (1 - \frac{W_\ghat}{\Ccone})F^E,
\end{align*}
where the second equality is because $A = \frac{\Ccone}{F^E \Qcap}$ and $\Pi^\max = \frac{1+ F^E}{F^E}\Ccone$. The last expression is nonnegative because $\Ccone = \max_{g\in\mc{G}} W_g$.

If $\ghat$ is the peaker, then $W_{\ghat} = \Ccone$, thus $\frac{r^* - \Qcap}{\Qcap} = 0 \Rightarrow r^* = \Qcap$.
\Halmos\endproof
\begin{repeattheorem}[Proposition \ref{th: cm_h_opt}.]
There exists an optimal solution for the QC model \eqref{eq: cm_qc} with $h_g = F^U_g P^\max_g, \forall g\in\mc{G}$.
\end{repeattheorem}
\proof{Proof. of Proposition \ref{th: cm_h_opt}}
Let $f^{CM}(\vx)$ denote the objective function in \eqref{eq: cm_qc_0} with $\vx$ representing all variables of (CM-QC) excluding $h_g, \forall g\in\mc{G}$. Let $\mc{F}^{CM}$ be the projected feasible region of variables $\vx$ in \eqref{eq: cm_qc} obtained by replacing the right-hand side of \eqref{eq: cm_qc_3} with $F^U_g P^\max_g$. Then any projected feasible region $\mc{F}^{CM'}$ obtained by replacing the right-hand side of \eqref{eq: cm_qc_3} with values less than $F^U_g P^\max_g, \forall g\in\mc{G}'\subseteq \mc{G}$ and equal $F^U_g P^\max_g, \forall g\in\mc{G}\sm\mc{G}'$ is a subset of $\mc{F}^{CM}$ and we have $\max\{f^{CM}(\vx)|\vx\in\mc{F}^{CM}\} \geq \max\{f^{CM}(\vx)|\vx\in\mc{F}^{CM'}\}$. Therefore, there exists an optimal solution for \eqref{eq: cm_qc} with $\vx\in\mc{F}^{CM}$ and $h_g = F^U_g P^\max_g, \forall g\in\mc{G}$.\Halmos
\endproof
\begin{repeattheorem}[Theorem \ref{th: profit_cm}.]
Let $\Ghat$ be the set of generators that sold their capacity in the capacity market, and let $\ghat$ be the marginal supplier. Assuming the net CONE is correctly estimated, and $W_{\ghat} \neq 0$. Then the profit of generator $g$ is as follows:

(1) If $g\in\Ghat\sm\{\ghat\}$, then its profit equals $(W_{\ghat} - W_g)F^U_g P^\max_g > 0$.

(2) If $g$ is the marginal supplier $\ghat$, then its profit equals $W_{\ghat}(\frac{\Pi^\max - W_\ghat}{A} - \sum_{g\in\Ghat}F^U_g P^\max_g)\leq 0$

(3) If $g\notin\Ghat$, then its profit equals $-W_g F^U_g P^\max_g < 0$
\end{repeattheorem}
\proof{Proof. of Theorem \ref{th: profit_cm}}
Proposition \ref{th: cm_h_opt} indicates that one social welfare-maximization solution is for generators to bid $h_g = F^U_g P^\max_g$. Using the market clearing quantities provided in the greedy algorithm of Section \ref{ch: greedy}, when the market clears the generator $g\in\Ghat\sm\{\ghat\}$ sells its qualified capacity $h_g = F^U_g P^\max_g$, and the marginal generator sells $r^*- \sum_{g=1}^{\ghat-1}h_g=\frac{\Pi^\max - W_\ghat}{A} - \sum_{g\in\Ghat\sm\{\ghat\}}F^U_g P^\max_g$.

For $g\in\Ghat\sm\{\ghat\}$, its revenue from the capacity market is $W_{\ghat}F^U_g P^\max_g$, and its total net CONE is $W_g F^U_g P^\max_g$, which means its profit is $(W_{\ghat} - W_g)F^U_g P^\max_g$. Since $g$ is not marginal, its net CONE is less than $W_\ghat$ (we had assumed that all net CONEs are different). Thus, the profit of $g$ is strictly positive.

For $\ghat$, its profit equals $W_\ghat ( \frac{\Pi^\max - W_\ghat}{A}- \sum_{g\in\Ghat\sm\{\ghat\}}F^U_g P^\max_g) - W_\ghat F^U_\ghat P^\max_\ghat = W_{\ghat}(\frac{\Pi^\max - W_\ghat}{A} - \sum_{g\in\Ghat}F^U_g P^\max_g)$. Since the market clearing quantity is no more than $\sum_{g\in\Ghat}F^U_g P^\max_g$, the profit of $\ghat$ cannot be positive.

For $g\in\mc{G}\sm\Ghat$, it does not sell any capacity and thus does not have any revenue from the capacity market. Therefore, its profit equals the negative total net CONE: $-W_g F^U_g P^\max_g$. Since $W_g > W_\ghat > 0$, this value is strictly negative.
\Halmos\endproof
\begin{repeattheorem}[Corallary \ref{th: peak_rev}.]
The peaker is revenue balanced (i.e., its revenue equals cost) only if it is the marginal generator and $\Qcap = \sum_{g\in\mc{G}}F^U_g P^\max_g$. Otherwise, it always operates at a loss.
\end{repeattheorem}
\proof{Proof. of Corallary \ref{th: peak_rev}}
When the peaker is the marginal generator, $\pi^* = W_{\ghat} = \Ccone$ and $r^* = \Qcap$. Thus, the capacity it sells is $\Qcap - \sum_{g\in\mc{G}\sm\{\ghat\}} F^U_g P^\max_g$, as all other generators have lower net CONEs and should sell all their capacities. The peaker is revenue balanced only when it sells all its capacity, i.e. when $\Qcap - \sum_{g\in\mc{G}\sm\{\ghat\}} F^U_g P^\max_g = F^U_\ghat P^\max_\ghat \Rightarrow \Qcap = \sum_{g\in\mc{G}}F^U_g P^\max_g$. Otherwise, it has a negative profit.

If the peaker is not the marginal generator, then it does not sell any capacity because it has the highest net CONE. Thus, in this case it also has a negative profit.
\Halmos\endproof
\begin{repeattheorem}[Proposition \ref{th: em_h_opt}.]
There exists an optimal solution for \eqref{eq: em} with $v_{g} = P^\max_g - \bar{q}_g, \forall g\in\G$.
\end{repeattheorem}
\proof{Proof. of Proposition \ref{th: em_h_opt}}
The proof is similar to that of Proposition \ref{th: cm_h_opt}. Let $f^{EM}(\vx)$ denote the objective function in \eqref{eq: em_0}  with $\vx$ representing all variables of (EM) excluding $v_g, \forall g\in\mc{G}$. Let $\mc{F}^{EM}$ be the projected feasible region of variables $\vx$ in \eqref{eq: em} obtained by replacing the terms $v_g$ in the right-hand side of \eqref{eq: em_5} and \eqref{eq: em_6} with $P^\max_g - \bar{q}_g$. Then any projected feasible region $\mc{F}^{EM'}$ obtained by replacing $v_g$ in the right-hand side of \eqref{eq: em_5} and \eqref{eq: em_6} with values less than $P^\max_g - \bar{q}_g, \forall g\in\mc{G}'\subseteq \mc{G}$ and equal $P^\max_g - \bar{q}_g, \forall g\in\mc{G}\sm\mc{G}'$ is a subset of $\mc{F}^{EM}$ and we have $\min\{f^{EM}(x)|x\in\mc{F}^{EM}\} \leq \min\{f^{EM}(x)|x\in\mc{F}^{EM'}\}$. Therefore, there exists an optimal solution of (EM) with $\vx\in\mc{F}^{CM}$ and $v_{g} = P^\max_g - \bar{q}_g, \forall g\in\Gt$.
\Halmos\endproof
\begin{repeattheorem}[Proposition \ref{th: em_eq}.]
The optimal prices $(\lambda_{it}^*)_{i\in\mc{N}}$, production levels $(p_{gt}^*)_{g\in\mc{G}}$, load shedding levels $(\punmetstar)_{i\in\mc{N}}$, and the demand $(D_{it})_{i\in\mc{N}}$ form a competitive equilibrium in the energy market at time $t\in\mc{T}$.
\end{repeattheorem}
\proof{Proof. of Proposition \ref{th: em_eq}}
We show that under the prices and quantities $(\lambda_{it}^*)_{i\in\mc{N}}$, $(p_{gt}^*)_{g\in\mc{G}}$, $(\punmetstar)_{i\in\mc{N}}$, and $(D_{it})_{i\in\mc{N}}$, the market is cleared at $t\in\mc{T}$, and each market participant maximizes their utility, which means those prices and quantities constitute a market equilibrium \citep{mas1995microeconomic}. 

First, we show that the market is cleared at $t\in\mc{T}$. Summing up \eqref{eq: em_1} over $i\in\mc{N}$ and move the terms for $\punmet$ to the right-hand side, we get the following equation:
\begin{align*}
\sum_{g\in\mc{G}} p_{gt} = \sum_{i\in\mc{N}}D_{it} - \sum_{i\in\mc{N}}\punmet.
\end{align*}

Note that the network flow terms $f_{ijt}$ are eliminated, as $f_{ijt}=-f_{jit}$ due to \eqref{eq: em_2}. This equation shows that total supply equals total demand (after load shedding $p^{Unmet*}_{it}$) for any optimal solution $p^*_{gt}, \forall g\in\mc{G}$.

Next, we prove that each market participant maximizes its utility. For the demand side, since the demand is inelastic, there is no need to consider utility maximization for the consumers. 
For the supply side, consider thermal generator $g\in\Gt$, which maximizes its producer surplus by solving a profit-maximization problem:
\begin{subequations}\label{eq: ther_profit_max}
\begin{alignat}{4}
\max~& (\lambda^*_{i(g), t} - C^V_g)p_{gt}\\
\st~& \eqref{eq: em_5}, \eqref{eq: em_7}, \eqref{eq: em_8}, \eqref{eq: em_10}~\text{for time $t$}.
\end{alignat}
\end{subequations}
The profit-maximization problem for a renewable generator is formulated similarly.

We would like to show that at $t$, the optimal production level solution for each generator $p^*_{gt}$ from DCOPF is also optimal for the generator's profit-maximization problem. We reformulate (EM) by dualizing the flow balance constraint \eqref{eq: em_1} with its optimal shadow price $\lambda^*_{it}$:
\begin{subequations}\label{eq: em_reform}
\begin{alignat}{4}
\min~~& \sum_{t\in\mc{T}}\left(\sum_{g\in\Gt} C^V_g p_{gt}+\sum_{i\in\mc{N}}\cvoll\punmet\right) +\nonumber\\ &\sum_{i\in\mc{N}}\sum_{t\in\mc{T}}\lambda^*_{it}\left(D_{it} - \left(\sum_{g\in\mc{G}_i} p_{gt} + \punmet + \sum_{(j, i)\in\mc{E}}f_{jit} - \sum_{(i,j)\in\mc{E}} f_{ijt}\right)\right)\label{eq: em_reform_0}\\
\st~~&\eqref{eq: em_2} - \eqref{eq: em_10}
\end{alignat}
\end{subequations}
Since (EM) is a linear program, this reformulation is exact. Reversing the sign of the objective function \eqref{eq: em_reform_0} and rearranging the terms by generators and buses, we turn \eqref{eq: em_reform} to the following maximization problem:
\begin{subequations}\label{eq: em_reform2}
\begin{alignat}{4}
\max~~&\sum_{t\in\mc{T}}\Big(\sum_{g\in\Gt}(\lambda^*_{i(g),t}-C^V_g)p_{gt} + \sum_{g\in\Gr}\lambda^*_{i(g), t}p_{gt}+\sum_{i\in\mc{N}}(\lambda^*_{it} - \cvoll)\punmet\nonumber\\& - \sum_{i\in\mc{N}}\lambda^*_{it}(\sum_{(j,i)\in\mc{E}}f_{jit} - \sum_{(i,j)\in\mc{E}}f_{ijt})\Big)\\\label{eq: em_reform2_0}
\st~~&\eqref{eq: em_2} - \eqref{eq: em_10},
\end{alignat}
\end{subequations}
which can be decomposed by time, generators, and buses. After decomposition, the problem for thermal generator $g\in\Gt$ at time $t$ is the same as the profit-maximization problem \eqref{eq: ther_profit_max}. Therefore, the optimal production level $p^*_{gt}$ is also optimal for the profit-maximization problem. The same can be shown for renewable generators $g\in\Gr$. Thus, under the optimal shadow price $\lambda^*_{it}$, the optimal production solution $p^*_{gt}$ maximizes producer surplus.
\Halmos\endproof
\begin{repeattheorem}[Theorem \ref{th: profit_em}.]
If the thermal generator $g$ bids truthfully in the energy market, then its profit from the energy market at $t$ is as follows:

(1) If $\lambda^*_{i(g), t} > C^V_g$, then the profit is $(\lambda^*_{i(g), t} - C^V_g)P^\max_g\geq 0$. In particular, if $p_{i(g), t}^{\rm{Unmet} *} > 0$, then $\lambda^*_{i(g), t} = \cvoll > C^V_g$ and the profit is $(\cvoll - C^V_{g})P^\max_g$;

(2) If $\lambda^*_{i(g), t} \leq C^V_g$, then the profit is 0. 
\end{repeattheorem}
\proof{Proof. of Theorem \ref{th: profit_em}}
(1) When $\lambda^*_{i(g), t} > C^V_g$, because of the dual constraint \eqref{eq: em_kkt_1}, we must have $\alpha^*_{gt} > 0$. By the complementary slackness constraint \eqref{eq: em_kkt_18}, we have $p^*_{gt} = \bar{q}_g + v^*_{g}$. Since Proposition \ref{th: em_h_opt} shows that when generators bid truthfully $\bar{q}_g + v^*_{g} = P^\max_g$, we have $p^*_{gt}=P^{\max}_g$ and the profit of generator $g$ equals $(\lambda^*_{i(g), t} - C^V_g)p^*_{gt} = (\lambda^*_{i(g), t} - C^V_g)P^\max_g$.

When $p^{\rm{Unmet} *}_{i(g), t} > 0$, because of the complementary slackness constraint \eqref{eq: em_kkt_13}, $\lambda^*_{i(g), t} = \cvoll$. Since $\cvoll$ is higher than the variable cost of any generator, we have $\lambda^*_{i(g), t} = \cvoll > C^V_g$. Using the profit expression we obtained in the previous paragraph, the profit of $g$ is $(\cvoll - C^V_{g})P^\max_g$.

(2) When $\lambda^*_{i(g), t} = C^V_g$, the profit equals 0 because the revenue equals the cost. 

If $\lambda^*_{i(g), t} < C^V_g$, the optimal production level must be 0. This is because if $p^*_{gt} > 0$, then by complementary slackness constraint \eqref{eq: em_kkt_12}, $\lambda^*_{i(g),t} - \alpha^*_{gt} - C^V_g = 0 \Rightarrow \lambda^*_{i(g),t} = \alpha^*_{gt} + C^V_g$, which is impossible because $\lambda^*_{i(g), t} < C^V_g$ and $\alpha^*_{gt} \geq 0$. Therefore, the production level is 0 and thus the profit is also 0.\Halmos\endproof
\begin{repeattheorem}[Proposition \ref{th: marg_gen}.]
Assuming a thermal generator $\gtilde$ is marginal at bus $i$ and time period $t$:

(1) If the generator does not produce at its full capacity, then its profit from the energy market at $t$ is 0;

(2) If the generator produces at its full capacity, then its profit from the energy market at $t$ is between 0 and $(\cvoll - C^V_{\gtilde})P^\max_{\gtilde}$, and equals $(\cvoll - C^V_{\gtilde})P^\max_{\gtilde}$ when $p^{\rm{Unmet} *}_{i(\gtilde), t} > 0$. 
\end{repeattheorem}
\proof{Proof. of Proposition \ref{th: marg_gen}}
(1) Since the thermal generator $\gtilde$ is a marginal generator, $p^*_{\gtilde t} > 0$. Due to the complementary slackness constraint \eqref{eq: em_kkt_12}, we have
\begin{align}\label{eq: equation}
    \lambda^*_{i(\gtilde), t} - \alpha^*_{\gtilde t} - C^V_\gtilde = 0. 
\end{align}

If the generator does not produce at its full capacity, $p^*_{\gtilde t} < P^\max_\gtilde$. Thus, according to Proposition \ref{th: em_h_opt}, there exist optimal solutions for $p_{\gtilde t}$ and $v_{\gtilde}$ such that $p^*_{\gtilde t} < \bar{q}_\gtilde + v^*_{\gtilde} = P^\max_\gtilde$. Because of complementary slackness constraints \eqref{eq: em_kkt_18}, $\alpha^*_{\gtilde t} = 0$ and equation \eqref{eq: equation} becomes $\lambda^*_{i(\gtilde), t} = C^V_\gtilde$. Therefore, according to Theorem \ref{th: profit_em} the profit for generator $\gtilde$ at time period $t$ is 0.

(2) Now consider the generator producing at its full capacity. Because of $\alpha^*_{\gtilde t}\geq 0$ and equation \eqref{eq: equation}, we have $\lambda^*_{i(\gtilde),t} = C^V_\gtilde + \alpha^*_{\gtilde t} \geq C^V_\gtilde$. Because of \eqref{eq: em_kkt_3}, we also have $\lambda^*_{i(\gtilde), t} \leq \cvoll$. Thus, $\lambda^*_{i(\gtilde),t}\in [C^V_\gtilde, \cvoll]$ and $\gtilde$'s profit is no less than 0 and no more than $(\cvoll - C^V_{\gtilde})P^\max_{\gtilde}$. 

If $p^{\rm{Unmet} *}_{i(\gtilde), t} > 0$, then because of Theorem \ref{th: profit_em} the profit of $\gtilde$ equals $(\cvoll - C^V_{\gtilde})P^\max_{\gtilde}$.\Halmos\endproof
\begin{repeattheorem}[Corollary \ref{th: marg_gen_cone}.]
If a thermal generator $g$ never operates at the full capacity, then its net CONE equals its investment cost.
\end{repeattheorem}
\proof{Proof. of Corollary \ref{th: marg_gen_cone}}
If the thermal generator $g$ never operates at its full capacity, then either it generates no electricity and gets no income, or its production is less than its capacity. In the latter case, $g$ must be a marginal generator that has the highest variable costs among all operating generators at the bus. Otherwise, the generation from a more costly generator could have been fulfilled by $g$ at a lower cost. According to Proposition \ref{th: marg_gen}, in this case generator $g$ also gets 0 profit. Therefore, generator $g$ is revenue-balanced and gets 0 profit from the energy market. By the definition of net CONE in \eqref{eq: def_cone}, the net CONE of $g$ equals its investment cost.
\Halmos\endproof
\begin{repeattheorem}[Lemma \ref{th: cm_bid_0}.]
If the leader bids 0 in the capacity market, we have the following results for its revenue: 

(1) If the leader $g = 1$ is in the set $\Ghat\sm\{\ghat\}$ when bidding truthfully, then its revenue does not change when it bids 0, and the new market clearing price $W_{\ghat'} = W_{\ghat}$.

(2) If the leader $g=1$ is the marginal supplier when bidding truthfully, then the leader increases revenue by bidding 0 when 
(i) $F^U_1 P^\max_1 \geq \frac{\Pi^\max - W_\gdot}{A} - \sum_{i\in\Ghat\sm\{1\}} F^U_i P^\max_i$ and $F^U_1 P^\max_1 > \frac{W_1}{W_{\ghat'}}\left(\frac{\Pi^\max - W_1}{A} - \sum_{i\in\Ghat\sm\{1\}} F^U_i P^\max_i\right)$, in which case the new market clearing price satisfies $W_{\ghat'} \leq W_\gdot$; 
(ii) $F^U_1 P^\max_1 < \frac{\Pi^\max - W_\gdot}{A} - \sum_{i\in\Ghat\sm\{1\}} F^U_i P^\max_i$, in which case the new market clearing price satisfies $W_{\ghat'} \geq W_\gddot$.

(3) If the leader $g=1$ is in the set $\G\sm\Ghat$ when bidding truthfully, then it generates more revenue when bidding 0, and the new market clearing price satisfies $W_{\ghat'} \leq W_\ghat$.
\end{repeattheorem}
\proof{Proof. of Lemma \ref{th: cm_bid_0}}
    (1) When the leader is an allocated non-marginal supplier when bidding truthfully, it sells its entire qualified capacity at $W_\ghat$. If instead it bids 0, its allocated capacity stays the same, and the market clearing price does not changed. Thus, its revenue does not change.

    (2) If the leader is the marginal supplier when bidding truthfully, then when it bids 0 instead, the new market clearing price $W_{\ghat'}$ is either no more than $W_\gdot$ or no less than $W_\gddot$. We are interested in finding out when the new market clearing price leads to increased revenue for the leader.

    First, we find the condition under which $W_{\ghat'} \leq W_{\gdot}$. Consider when $W_{\ghat'} = W_\gdot$, the new market clearing quantity $Q^{\rm{Sold}'} = \frac{\Pi^\max - W_\gdot}{A}$, and the updated set of allocated suppliers $\Ghat'=\Ghat$. To clear the market at the price $W_\gdot$, the market clearing quantity $Q^{\rm{Sold}'}$ should be in the interval $(\sum_{i\in\Ghat\sm\{\gdot\}}F^U_i P^{\max}_i, \sum_{i\in\Ghat}F^U_i P^{\max}_i]$. If $Q^{\rm{Sold}'}>\sum_{i\in\Ghat}F^U_i P^{\max}_i$, then the market clearing price is above $W_\gdot$; Else if $Q^{\rm{Sold}'}\leq\sum_{i\in\Ghat\sm\{\gdot\}}F^U_i P^{\max}_i$, then the market clearing price is below $W_\gdot$. Therefore, $W_{\ghat'} \leq W_{\gdot}$ if and only if $Q^{\rm{Sold}'} = \frac{\Pi^\max - W_\gdot}{A} \leq \sum_{i\in\Ghat} F^U_i P^\max_i \Rightarrow F^U_1 P^\max_1 \geq \frac{\Pi^\max - W_\gdot}{A} - \sum_{i\in\Ghat\sm\{1\}} F^U_i P^\max_i$.
    
    Now let $W_{\ghat'} \leq W_{\gdot}$. If the leader increases its revenue by bidding 0, then its updated revenue $W_{\ghat'} F^U_1 P^\max_1 > W_1 \left(\frac{\Pi^\max - W_1}{A} - \sum_{i\in\Ghat\sm\{1\}}F^U_i P^\max_i\right)$, with the right-hand side of the inequality being the revenue of the leader when bidding truthfully. Thus, $F^U_1 P^\max_1 > \frac{W_1}{W_{\ghat'}} \left(\frac{\Pi^\max - W_1}{A} - \sum_{i\in\Ghat\sm\{1\}}F^U_i P^\max_i\right)$. 

    On the other hand, when $W_{\ghat'} \geq W_\gddot$, we have $W_{\ghat'}> W_\ghat = W_1$. In addition, the leader is fully allocated, and thus $q'_1 = F^U_1 P^\max_1 \geq q_1$. Therefore, the leader's revenue when bidding 0, $W_{\ghat'} q'_1$, is always higher than its original revenue $W_1 q_1$.

    (3) If the leader is unallocated when bidding truthfully, then it does not get any revenue. If instead it bids 0, then the updated market clearing price $W_{\ghat'}\in (0, W_\ghat]$. Thus, the leader's updated revenue $W_{\ghat'}F^U_1 P^\max_1 > 0$.
\Halmos\endproof
\begin{repeattheorem}[Proposition \ref{th: cm_stack_allocated}.]
If the leader $g = 1$ is in the set $\Ghat\sm\{\ghat\}$ when bidding truthfully, then it can increase the revenue by bidding a price that is different from its net CONE and become the marginal supplier if $\exists \bar{\Gset}$ such that $\forall \Ghat''\in\bar{\Gset}$, the corresponding $B > W_{\ghat}$, and if either of the following is true:

(i) $\max_{\Ghat''\in\bar{\Gset}, B < W_{\gddot''}}B>\sqrt{A W_{\ghat}F^U_1 P^\max_1}$;

(ii) $\max_{\Ghat''\in\bar{\Gset}} W_{\gddot''} \left(\frac{\Pi^\max-W_{\gddot''}}{A}-\sum_{i\in\Ghat''\sm\{1\}}F^U_i P^\max_i\right) > W_\ghat F^U_1 P^\max_1$.
\end{repeattheorem}
\proof{Proof. of Proposition \ref{th: cm_stack_allocated}}
    As shown in Lemma \ref{th: cm_bid_0}, the leader's revenue does not change when bidding 0. Therefore, when either bidding truthfully or bidding 0, the leader's revenue equals $W_\ghat F^U_1 P^\max_1$. It has the incentive to deviate from the truthful bid only if it becomes a marginal supplier and obtains a higher revenue. When it becomes the marginal generator, its sold capacity is no more than $F^U_1 P^\max_1$, so to increase revenue it must bid a price $w''_1 > W_\ghat$. 

    The capacity that the leader sells when it becomes the marginal supplier is $q''_1 = \qsoldp_1 - \sum_{i\in\Ghat''\sm\{1\}} F^U_i P^\max_i$, and we have the following expression for the leader's revenue:
    \begin{align}\label{eq: rev_leader_marginal}
        w''_1 q''_1 &= w''_1 \left(\frac{\Pi^\max - w''_1}{A} - \sum_{i\in\Ghat''\sm\{1\}} F^U_i P^\max_i\right)\nonumber\\
        &= -\frac{1}{A}(w''_1 - B)^2 + \frac{B^2}{A}.
    \end{align}
     Note that because the set of allocated generators $\Ghat''$ depends on $w''_1$, \eqref{eq: rev_leader_marginal} is a piecewise concave quadratic function. For each $\Ghat''\in\Gset$, the maximizer for the quadratic function is $w''_1 = B$ when ignoring the domain of $w''_1$. The corresponding maximum is $\frac{B^2}{A}$. Note that the value of $B$ depends on the set $\Ghat''$, and it decreases as $\Ghat''$ includes more generators.

    The shape of the piecewise concave quadratic function \eqref{eq: rev_leader_marginal} depends on how $B$ compares with the bidding prices of generators. We illustrate different shapes of \eqref{eq: rev_leader_marginal} in Figure \ref{fig: piecewise}. In Figure \ref{fig: piecewise_1}, $B\leq W_{\ghat}$. Since the value of $B$ decreases as more generators are included in $\Ghat''$, if the leader can bid higher than $W_\gddot$ and gets allocated, then its revenue will continue a decreasing trend, as the stationary point of the corresponding quadratic revenue function moves further left. In Figure \ref{fig: piecewise_2} $B\in (W_\ghat, W_\gddot]$ and in Figure \ref{fig: piecewise_3} $B$ is greater than $W_\gddot$ (and is no more than the net CONE of the next expensive generator to $\gddot$).

    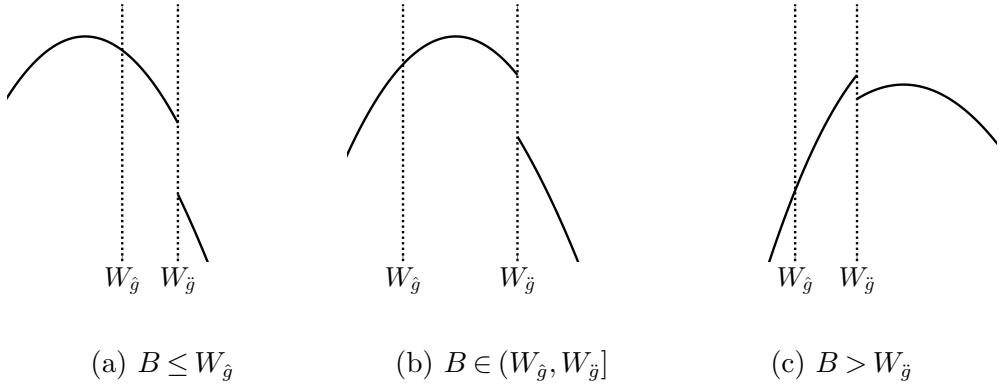
\begin{figure}[bthp]
    \centering
        \begin{subfigure}[t]{0.25\textwidth}
        \centering
        \begin{tikzpicture}[scale=0.6]
\begin{axis}[
    xmin=-2.5, xmax=7.5,
    ymin=1, ymax=9,
    xtick=\empty,
    ytick=\empty,
    xticklabels={}, yticklabels={},
    axis line style={draw=none},
    extra x ticks={1.2, 3},
    extra x tick labels={$W_{\ghat}$, $W_{\gddot}$},
    tick label style={font=\Large},
]

\addplot[domain=-5:3, samples=100, line width=1.5pt] {-0.3*x^2+8};

\addplot[domain=3:6, samples=100, line width=1.5pt] {-0.2*(x+2)^2 + 8.08};

    \draw[dotted, line width=1.5pt] (1.2,1) -- (1.2,9);
    \draw[dotted, line width=1.5pt] (3,1) -- (3,9);

\end{axis}
\end{tikzpicture}
        \caption{$B \leq W_\ghat$}
        \label{fig: piecewise_1}
    \end{subfigure}
        ~
        \begin{subfigure}[t]{0.25\textwidth}
        \centering
        \begin{tikzpicture}[scale=0.6]
\begin{axis}[
    xmin=-3.5, xmax=6.5,
    ymin=1, ymax=9,
    xtick=\empty,
    ytick=\empty,
    xticklabels={}, yticklabels={},
    axis line style={draw=none},
    extra x ticks={-1.7, 2},
    extra x tick labels={$W_{\ghat}$, $W_{\gddot}$},
    tick label style={font=\Large},
]

\addplot[domain=-5:2, samples=100, line width=1.5pt] {-0.3*x^2+8};

\addplot[domain=2:6, samples=100, line width=1.5pt] {-0.2*(x+2)^2 + 8.08};

    \draw[dotted, line width=1.5pt] (-1.7,1) -- (-1.7,9);
    \draw[dotted, line width=1.5pt] (2,1) -- (2,9);

\end{axis}
\end{tikzpicture}
        \caption{$B\in (W_\ghat, W_\gddot]$}
        \label{fig: piecewise_2}
        \end{subfigure}
        ~
        \begin{subfigure}[t]{0.25\textwidth}
        \centering
         \begin{tikzpicture}[scale=0.6]
\begin{axis}[
    xmin=-7.5, xmax=2.5,
    ymin=1, ymax=9,
    xtick=\empty,
    ytick=\empty,
    xticklabels={}, yticklabels={},
    axis line style={draw=none},
    extra x ticks={-4, -2},
    extra x tick labels={$W_{\ghat}$, $W_{\gddot}$},
    tick label style={font=\Large},
]

\addplot[domain=-5:-2, samples=100, line width=1.5pt]{-0.3*x^2+8};

\addplot[domain=-2:6, samples=100, line width=1.5pt] {-0.2*(x+0.5)^2 + 6.5};

    \draw[dotted, line width=1.5pt] (-4,1) -- (-4,9);
    \draw[dotted, line width=1.5pt] (-2,1) -- (-2,9);

\end{axis}
\end{tikzpicture}
        \caption{$B > W_\gddot$}
        \label{fig: piecewise_3}
        \end{subfigure}
    \caption{Illustration for piecewise concave quadratic function \eqref{eq: rev_leader_marginal}}
    \label{fig: piecewise}
    \end{figure}
      
    Consider the value of $B$ corresponding to a set $\Ghat''\in\Gset$ . If $B \leq W_\ghat$, then $w''_1 q''_1$ monotonically decreases when $w''_1 > W_\ghat$, and thus $w''_1 q''_1$ cannot exceed the revenue when the leader bids a price no more than $W_\ghat$ and sells all its qualified capacity. Therefore, the leader does not deviate from its truthful bid when $B \leq W_\ghat$.

    Otherwise, if $B > W_\ghat$, the maximum of $w''_1 q''_1$ can be achieved either at $B$ (if $B < W_{\gddot''}$) or at $W_{\gddot''}$. If $w''_1 = B$ is the maximizer, then the corresponding revenue is $\frac{B^2}{A}$, otherwise, the revenue is $W_{\gddot''}\left(\frac{\Pi^\max - W_{\gddot''}}{A} - \sum_{i\in\Ghat''\sm\{1\}} F^U_i P^\max_i\right)$. Note that $W_{\gddot''}$ is not necessarily illustrated in Figure \ref{fig: piecewise}, as in general $W_{\gddot''}\geq W_{\gddot}$.

    Therefore, it is beneficial for the leader to bid untruthfully if $\exists \bar{\Gset}$ such that $\forall \Ghat''\in\bar{\Gset}$, the corresponding $B > W_{\ghat}$, and if the revenue from truthful bidding, $W_{\ghat} F^U_1 P^\max_1$ is less than either of the following:
    
    (i) $\max_{\Ghat''\in\bar{\Gset}, B < W_{\gddot''}}\frac{B^2}{A}$;
    
    (ii)  $\max_{\Ghat''\in\bar{\Gset}}W_{\gddot''}\left(\frac{\Pi^\max - W_{\gddot''}}{A} - \sum_{i\in\Ghat''\sm\{1\}} F^U_i P^\max_i\right)$. 

    \noindent Note that for condition (i), $\max_{\Ghat''\in\bar{\Gset}, B < W_{\gddot''}}\frac{B^2}{A} > W_{\ghat} F^U_1 P^\max_1$ is equivalent to $\max_{\Ghat''\in\bar{\Gset}, B < W_{\gddot''}}B>\sqrt{A W_{\ghat}F^U_1 P^\max_1}$.
\Halmos\endproof
\begin{repeattheorem}[Proposition \ref{th: cm_stack_marginal}.]
    If the leader $g=1$ is the marginal supplier when bidding truthfully, then it could increase the revenue by either bidding 0 or bidding a different price as the marginal supplier. The leader prefers to still be the marginal supplier if $\exists \bar{\Gset}$ such that $\forall \Ghat''\in\bar{\Gset}$, the corresponding $B > W_{\ghat'}$, and if either of the following is true:
    
    (i) $\max_{\Ghat''\in\bar{\Gset}, B < W_{\gddot''}}B>\sqrt{A W_{\ghat'}F^U_1 P^\max_1}$;
    
    (ii) $\max_{\Ghat''\in\bar{\Gset}} W_{\gddot''} \left(\frac{\Pi^\max-W_{\gddot''}}{A}-\sum_{i\in\Ghat''\sm\{1\}}F^U_i P^\max_i\right) > W_{\ghat'} F^U_1 P^\max_1$.

    The leader's strategic offer price $w''_1$ equals either $B$ or $W_{\gddot''}$ depending on which price leads to a higher revenue. The leader bids truthfully only if $w''_1 = W_1$.
\end{repeattheorem}
\proof{Proof. of Proposition \ref{th: cm_stack_marginal}}
    If the leader bids 0, then its revenue is $W_{\ghat'}F^U_1 P^\max_1$.  To find conditions under which it is beneficial for the leader to be the marginal supplier, we can use the results in Proposition \ref{th: cm_stack_allocated}, by treating 0 as the leader's truthful bidding price and $W_{\ghat'}$ as the corresponding market clearing price. Thus, the leader earns a higher revenue than $W_{\ghat'}F^U_1 P^\max_1$ if $\exists \bar{\Gset}$ such that $\forall \Ghat''\in\bar{\Gset}$, the corresponding $B > W_{\ghat'}$, and if either of the following conditions is true:
    
    (i) $\max_{\Ghat''\in\bar{\Gset}, B < W_{\gddot''}}B>\sqrt{A W_{\ghat'}F^U_1 P^\max_1}$;
    
    (ii) $\max_{\Ghat''\in\bar{\Gset}} W_{\gddot''} \left(\frac{\Pi^\max-W_{\gddot''}}{A}-\sum_{i\in\Ghat''\sm\{1\}}F^U_i P^\max_i\right) > W_{\ghat'} F^U_1 P^\max_1$.

    Case (i) corresponding to the leader bidding $w''_1 = B$ for a $\Ghat''\in\bar{\Gset}$ that maximizes the left-hand side of the inequality; Case (ii) corresponding to the leader bidding $w''_1 = W_\gddot''$ for a $\Ghat''\in\bar{\Gset}$ that maximizes the left-hand side of the inequality. If both cases are true, then the leader picks a bidding price (either $B$ or $W_{\gddot''}$) that leads to the higher revenue. This price may or may not be the same as its net CONE.

    If the conditions stated above are not satisfied, then the leader maximizes its revenue by bidding 0.
\Halmos\endproof
\begin{repeattheorem}[Proposition \ref{th: cm_stack_unallocated}.]
If the leader $g=1$ is in the set $\mc{G}\sm\Ghat$ when bidding truthfully, then it always earns a higher revenue when bidding untruthfully. More specifically, the leader maximizes its revenue either by bidding 0 or by becoming the marginal supplier. It becomes the marginal supplier if $F^U_1 P^\max_1 \geq \frac{\Pi^{\max}-W_{\ghat}}{A} - \sum_{i\in\Ghat\sm\{\ghat\}} F^U_i P^\max_i$ and $\exists \bar{\Gset}$ such that $\forall \Ghat''\in\bar{\Gset}$, the corresponding $B > W_{\ghat'}$, and if either of the following is true:

(i) $\max_{\Ghat''\in\bar{\Gset}, B < W_{\gddot''}}B>\sqrt{A W_{\ghat'}F^U_1 P^\max_1}$;

(ii) $\max_{\Ghat''\in\bar{\Gset}} W_{\gddot''} \left(\frac{\Pi^\max-W_{\gddot''}}{A}-\sum_{i\in\Ghat''\sm\{1\}}F^U_i P^\max_i\right) > W_{\ghat'} F^U_1 P^\max_1$.
\end{repeattheorem}
\proof{Proof. of Proposition \ref{th: cm_stack_unallocated}}
    Since $g=1\in\G\sm\Ghat$, when it bids 0, the new market clearing price $W_{\ghat'} \leq W_{\ghat}$. Also, when it becomes the marginal supplier, in order to get allocated its bidding price $w''_1 \leq W_{\ghat}$. 

    When the leader bids 0, its revenue is $W_{\ghat'}F^U_1 P^\max_1$. To earn a higher revenue as the marginal supplier, we need $w''_1 > W_{\ghat'} \Rightarrow w''_1\in(W_{\ghat'}, W_\ghat]$. Therefore, the leader will not be the marginal supplier if $W_{\ghat'} = W_{\ghat}$, which is true when $r^* = \frac{\Pi^{\max}-W_{\ghat}}{A} > \sum_{i\in\Ghat\sm\{\ghat\}} F^U_i P^\max_i + F^U_1 P^\max_1 \Rightarrow F^U_1 P^\max_1 < \frac{\Pi^{\max}-W_{\ghat}}{A} - \sum_{i\in\Ghat\sm\{\ghat\}} F^U_i P^\max_i$. 
    
    Otherwise, if $W_{\ghat'} < W_{\ghat}$ and thus $F^U_1 P^\max_1 \geq \frac{\Pi^{\max}-W_{\ghat}}{A} - \sum_{i\in\Ghat\sm\{\ghat\}} F^U_i P^\max_i$, then it is possible for the leader to obtain a higher revenue than $W_{\ghat'}F^U_1 P^\max_1$ by bidding $w''_1 \in (W_{\ghat'}, W_{\ghat}]$. To find conditions under which it is beneficial for the leader to become the marginal supplier, we again use the results in Proposition \ref{th: cm_stack_allocated}, similar to what we did in the proof of Proposition \ref{th: cm_stack_marginal}. By treating 0 as the leader's truthful bidding price and $W_{\ghat'}$ as the corresponding market clearing price, we can conclude that in this case the leader earns a higher revenue as the marginal supplier if $\exists \bar{\Gset}$ such that $\forall \Ghat''\in\bar{\Gset}$, the corresponding $B > W_{\ghat'}$, and if either of the following conditions is true:
    
    (i) $\max_{\Ghat''\in\Gset, B < W_{\gddot''}}B>\sqrt{A W_{\ghat'}F^U_1 P^\max_1}$;
    
    (ii) $\max_{\Ghat''\in\Gset} W_{\gddot''} \left(\frac{\Pi^\max-W_{\gddot''}}{A}-\sum_{i\in\Ghat''\sm\{1\}}F^U_i P^\max_i\right) > W_{\ghat'} F^U_1 P^\max_1$.
\Halmos\endproof
\begin{repeattheorem}[Proposition \ref{th: em_lmp}.]
If the network is connected and without congestion, then the value of $\lambda^*_{it}$ is the same at all buses for each time period. If there is physical withholding by the leader, then $\lambda^{*'}_{i(1), t}\geq \lambda^{*}_{i(1), t}, \forall t\in\mc{T}$.
\end{repeattheorem}
\proof{Proof. of Proposition \ref{th: em_lmp}}

    To prove $\lambda^*_{it}$ is the same at all buses, we use an alternative expression for the energy market price \citep{ji2016probabilistic} based on the shift factor formulation of (EM):
    \[
    \lambda^*_{it} = \hat{\lambda}^*_t + \sum_{(j, k)\in\mc{E}}B_{jk}(U_{ji}-U_{ki})(-\zeta^*_{jkt}+\eta^*_{jkt}),
    \]
    where $\hat{\lambda}_t$ is a dual variable of the shift factor formulation. $U_{ji}$ and $U_{ki}$ are parameters in the shift factor formulation. When there is no congestion in the connected network, constraints \eqref{eq: em_3} and \eqref{eq: em_4} are not binding. By complementary slackness, their corresponding optimal dual solutions $\zeta^*_{ijt} = \eta^*_{ijt} = 0$. Therefore, $\lambda^*_{it} = \hat{\lambda}^*_t, \forall i\in\mc{N}$. That is to say, $\lambda^*_{it}$ is the same at all buses for $t$.

    When there is no congestion, there is only one marginal generator in the network at time $t$, which we denote as $\gtilde$. Since we minimize total costs, generators with $C_g^V<C_{\gtilde}^V$ must operate at full capacity, while generators with $C_g^V>C_{\gtilde}^V$ do not produce. As shown in the proof of Proposition \ref{th: marg_gen}, when $p^*_{\gtilde t}<P^\max_{\gtilde}$ the price $\lambda^{*}_{i(\gtilde), t} =C_{\gtilde}^V$. Thus, $\lambda^{*}_{it} = C_{\gtilde}^V,\forall i\in\mc{N}$. 
    
    Let $\gddot$ be the least expensive generator that is not operating (we assume for now that $\gtilde$ is not the most expensive generator). We show that when $p^*_{\gtilde t} = P^{\max}_{\gtilde}$, $\lambda^{*}_{i(1), t}\in[C_{\gtilde}^V, C_{\gddot}^V]$. On one hand, as shown in the proof of Proposition \ref{th: marg_gen}, when the generator operates at full capacity $\lambda^{*}_{i(1), t}\geq C_{\gtilde}^V$. On the other hand, since the capacity constraint for $\gddot$ is not a strict equality, $\alpha^*_{\gddot t} = 0$ and thus $\lambda^{*}_{i(1), t}\leq C_{\gddot}^V$ because of dual constraint \eqref{eq: em_kkt_1}. 

    Now let $\gtilde'$ be the new marginal generator at $t$ when the leader withholds its capacity. Then $C_{\gtilde'}^V \geq C_{\gtilde}^V$. The updated LMP can be similarly calculated as before. In particular:

    (1) If $\gtilde' = \gtilde$, then $p^*_{\gtilde t}<p^{*'}_{\gtilde t}\leq P^{\max}_{\gtilde}$. Therefore, $\lambda^{*}_{i(1), t} = C_{\gtilde}^V$ and $\lambda^{*'}_{i(1), t} \in [C_{\gtilde}^V, C_{\gddot}^V]$. Thus, $\lambda^{*'}_{i(1), t} \geq \lambda^{*}_{i(1), t}$;

    (2) If $\gtilde' \neq \gtilde$, then $C_{\gtilde'}^V \geq C_{\gddot}^V$ and thus $\lambda^{*'}_{i(1), t} \in [C_{\gddot}^V, C_{\gddot'}^V]$. Therefore, $\lambda^{*'}_{i(1), t} \geq \lambda^{*}_{i(1), t}$.

    Finally, if $\gtilde$ is the most expensive generator, then after withholding the price either stays the same or increases to $\cvoll$.
\Halmos\endproof
\section{MIP Model for Capacity Market}\label{ch: mip}
In this section, we propose a MIP model for the capacity market. First, we sort the generators in the ascending order of $W_g$. Let $x_g$ be a binary variable that equals 1 if and only if generator $g$ gets allocated in the capacity market, and $z_g$ be a binary variable that equals 1 if and only if generator $g$ is the marginal producer. The MIP model is as follows:
\begin{subequations}\label{eq: cm_mip}
\begin{alignat}{4}
\text{(CM-MIP):}~\min~~&\sum_{g\in\mc{G}} W_g z_g \label{eq: cm_mip_0}\\
\st~~& h_g \leq F^U_g P^\max_g && g\in\mc{G}\label{eq: cm_mip_0_1}\\
& q_g \leq h_g && g\in\mc{G}\label{eq: cm_mip_0_2}\\
& q_g\leq F^U_g P^\max_g x_g && g\in\mc{G}\label{eq: cm_mip_1}\\
& q_g \geq F^U_g P^\max_g (x_g - z_g) && g\in\mc{G}\label{eq: cm_mip_3}\\
& \sum_{i = 1}^g q_{i} \geq \Qsold_g z_g && g\in\mc{G}\label{eq: cm_mip_4}\\
&  \sum_{i = 1}^g q_{i} \leq \Qsold_g z_g  + M (1 - z_g)~~&& g\in\mc{G}\label{eq: cm_mip_5}\\
& x_g = \sum_{i = g}^{|\mc{G}|}z_{i}~~&&\forall g\in\mc{G}\label{eq: cm_mip_6}\\
& \sum_{g\in\mc{G}}z_g = 1\label{eq: cm_mip_7}\\
& x_g, z_g\in\{0,1\}&&\forall g\in\mc{G}\label{eq: cm_mip_8}
\end{alignat}
\end{subequations}
where $\Qsold_g = \frac{\Pi^\max - W_g}{A}$ is the total sold capacity when market clearing price equals $W_g$, and $M$ is a big-M constant. The objective \eqref{eq: cm_mip_0} minimizes the market clearing price, which equals the net CONE of the marginal producer. Constraints \eqref{eq: cm_mip_0_1} set the upper bound for the offer capacity. Constraints \eqref{eq: cm_mip_0_2} set the upper bound for the sold capacity. Constraints \eqref{eq: cm_mip_1} connect variables $q_g$ and $x_g$, and force $q_g$ to be 0 when $x_g = 0$. Constraints \eqref{eq: cm_mip_3} ensure that if a non-marginal generator sells some of its capacity then all its capacity will be sold. Constraints \eqref{eq: cm_mip_4} and \eqref{eq: cm_mip_5}  ensure that if $g$ is a marginal generator, then the total sold capacity up to generator $g$ should equal $\Qsold_g$. Otherwise, those constraints are redundant. We can set the big-M constant $M = \Qsold_1$. This is because the total sold capacity is at most $\Qsold_1$, as the lowest possible market clearing price equals the lowest net CONE $W_1$. Constraints \eqref{eq: cm_mip_6} connect $x_g$ and $z_g$. Constraint \eqref{eq: cm_mip_7} ensures that there is only 1 marginal producer.

\newpage
\section{Tables for Case Study}\label{ec: tables}
\FloatBarrier

\begin{table}[htbp]
 \captionsetup[table]{justification=raggedright,singlelinecheck=off}
\caption{Comparison for the leader's profit (\$) from the energy market ($^*$: potentially underestimated difference due to suboptimality)}
\centering
\label{tab:diff_profit_1}
{\small
\begin{tabular}{lrrrlrr}
\hline
\multicolumn{1}{c}{}       & \multicolumn{1}{c}{}                  & \multicolumn{2}{c}{Normal congestion}                                               & \multicolumn{1}{c}{} & \multicolumn{2}{c}{High congestion}                                                \\ \cline{3-4} \cline{6-7} 
\multicolumn{1}{c}{Leader} & \multicolumn{1}{c}{Demand (\%)} & \multicolumn{1}{c}{S. Both vs. noCM} & \multicolumn{1}{c}{S. Both vs. True} & \multicolumn{1}{c}{} & \multicolumn{1}{c}{S. Both vs. noCM} & \multicolumn{1}{c}{S. Both vs. True} \\ \hline
NG                         & 60                                    & 0.0                                   & 0.0                                    &                      & 0.0                                   & 0.0                                    \\
NG                         & 70                                    & 0.0                                   & 0.0                                    &                      & 0.0                                   & 0.0                                    \\
NG                         & 80                                    & 0.0                                   & 0.0                                    &                      & 0.0                                   & 0.0                                    \\
NG                         & 90                                    & -355.1                                & 0.0                                    &                      & -718.9                                & 0.0                                    \\
NG                         & 100                                   & -263.1                                & 0.0                                    &                      & -29110.7                              & 14501.2                                \\
NG                         & 110                                   & 0.0                                   & 0.0                                    &                      & 0.0                                   & 0.0                                    \\
NG                         & 120                                   & 0.0                                   & 0.0                                    &                      & 0.0                                   & 0.0                                    \\
Coal                       & 60                                    & -1242.7                               & 566.9                                  &                      & 0.0                                   & 0.0                                    \\
Coal                       & 70                                    & 0.0                                   & 0.0                                    &                      & 0.0                                   & 0.0                                    \\
Coal                       & 80                                    & -539.8                                & 0.0                                    &                      & -386.5                                & 0.0                                    \\
Coal                       & 90                                    & -173.2                                & 0.0                                    &                      & 0.0                                   & 0.0                                    \\
Coal                       & 100                                   & -182.0                                & 0.0                                    &                      & -9032.2                               & 14760.0                                \\
Coal                       & 110                                   & 0.0                                   & 0.0                                    &                      & 0.0                                   & 0.0                                    \\
Coal                       & 120                                   & 0.0                                   & 0.0                                    &                      & 0.0                                   & 0.0                                    \\
Nuclear                    & 60                                    & 0.0                                   & 650.3                                  &                      & 0.0                                   & 0.0                                    \\
Nuclear                    & 70                                    & 0.0                                   & 0.0                                    &                      & 0.0                                   & 0.0                                    \\
Nuclear                    & 80                                    & 0.0                                   & 994.0                                  &                      & 0.0                                   & 691.2                                  \\
Nuclear                    & 90                                    & 0.0                                   & 0.0                                    &                      & 0.0                                   & 0.0                                    \\
Nuclear                    & 100                                   & -443.3                                & 0.0                                    &                      & -28330.0                              & 29289.1                                \\
Nuclear                    & 110                                   & 0.0                                   & 0.0                                    &                      & 0.0                                   & 0.0                                    \\
Nuclear                    & 120                                   & 0.0                                   & 0.0                                    &                      & -1403738.2                            & 0.0                                    \\
RFO                        & 60                                    & 0.0                                   & 0.0                                    &                      & 0.0                                   & 0.0                                    \\
RFO                        & 70                                    & 0.0                                   & 0.0                                    &                      & 0.0                                   & 0.0                                    \\
RFO                        & 80                                    & 0.0                                   & 0.0                                    &                      & 0.0                                   & 0.0                                    \\
RFO                        & 90                                    & 0.0                                   & 0.0                                    &                      & 0.0                                   & 0.0                                    \\
RFO                        & 100                                   & 0.0                                   & 0.0                                    &                      & 0.0                                   & 0.0                                    \\
RFO                        & 110                                   & 0.0                                   & 0.0                                    &                      & 0.0                                   & 0.0                                    \\
RFO                        & 120                                   & 0.0                                   & 0.0                                    &                      & -668428.0                             & 0.0                                    \\
Hydro                      & 60                                    & -809.5                                & 0.0                                    &                      & $0.0^*$           & 0.0                                    \\
Hydro                      & 70                                    & 0.0                                   & 0.0                                    &                      & $-121.9^*$        & 0.0                                    \\
Hydro                      & 80                                    & -166.1                                & 0.0                                    &                      & -2388.5                               & 0.0                                    \\
Hydro                      & 90                                    & 0.0                                   & 0.0                                    &                      & $0.0^*$           & 0.0                                    \\
Hydro                      & 100                                   & 0.0                                   & 0.0                                    &                      & -4565.8                               & 0.0                                    \\
Hydro                      & 110                                   & 0.0                                   & 0.0                                    &                      & 0.0                                   & 0.0                                    \\
Hydro                      & 120                                   & 0.0                                   & 0.0                                    &                      & 0.0                                   & 0.0                                    \\
Wood                       & 60                                    & 0.0                                   & 0.0                                    &                      & 0.0                                   & 0.0                                    \\
Wood                       & 70                                    & 0.0                                   & 0.0                                    &                      & 0.0                                   & 0.0                                    \\
Wood                       & 80                                    & 0.0                                   & 0.0                                    &                      & 0.0                                   & 0.0                                    \\
Wood                       & 90                                    & 0.0                                   & 0.0                                    &                      & 0.0                                   & 0.0                                    \\
Wood                       & 100                                   & 0.0                                   & 0.0                                    &                      & -201.2                                & 0.0                                    \\
Wood                       & 110                                   & 0.0                                   & 0.0                                    &                      & 0.0                                   & 0.0                                    \\
Wood                       & 120                                   & 0.0                                   & 0.0                                    &                      & 0.0                                   & 0.0                                    \\ \hline
\end{tabular}}
\end{table}
\begin{table}[htbp]
\centering
\caption{Comparison for the leader's profit (\$) from the energy market, allowing untruthful bids for the leader's variable cost ($^*$: potentially underestimated difference due to suboptimality)}
\label{tab:diff_profit_2}
{\small
\begin{tabular}{lrrrlrr}
\hline
\multicolumn{1}{c}{}       & \multicolumn{1}{c}{}                  & \multicolumn{2}{c}{Normal congestion}                                        & \multicolumn{1}{c}{} & \multicolumn{2}{c}{High congestion}                                         \\ \cline{3-4} \cline{6-7} 
\multicolumn{1}{c}{Leader} & \multicolumn{1}{c}{Demand (\%)} & \multicolumn{1}{c}{S. Both vs. noCM} & \multicolumn{1}{c}{S. Both vs. True} & \multicolumn{1}{c}{} & \multicolumn{1}{c}{S. Both vs. noCM} & \multicolumn{1}{c}{S. Both vs. True} \\ \hline
NG                         & 60                                    & 0.0                               & 0.0                                 &                      & 0.0                               & 0.0                                 \\
NG                         & 70                                    & 0.0                               & 0.0                                 &                      & 0.0                               & 0.0                                 \\
NG                         & 80                                    & 0.0                               & 0.0                                 &                      & 0.0                               & 0.0                                 \\
NG                         & 90                                    & -96.4                             & 564.7                               &                      & -29.3                             & 771.2                               \\
NG                         & 100                                   & 0.0                               & 18849.3                             &                      & 0.0                               & 64715.8                             \\
NG                         & 110                                   & 0.0                               & 61.1                                &                      & $0.0^*$       & 269.3                               \\
NG                         & 120                                   & 0.0                               & 118488.0                            &                      & 0.0                               & 5471.8                              \\
Coal                       & 60                                    & $-1242.7^*$   & 566.9                               &                      & 0.0                               & 0.0                                 \\
Coal                       & 70                                    & 0.0                               & 0.0                                 &                      & 0.0                               & 248.5                               \\
Coal                       & 80                                    & -539.8                            & 0.0                                 &                      & -386.5                            & 0.0                                 \\
Coal                       & 90                                    & -173.2                            & 0.0                                 &                      & 0.0                               & 0.0                                 \\
Coal                       & 100                                   & $-182.0^*$    & 0.0                                 &                      & -7920.3                           & 16035.9                             \\
Coal                       & 110                                   & 0.0                               & 0.0                                 &                      & 0.0                               & 0.0                                 \\
Coal                       & 120                                   & 0.0                               & 61801.5                             &                      & 0.0                               & 0.0                                 \\
Nuclear                    & 60                                    & 0.0                               & 650.3                               &                      & 0.0                               & 0.0                                 \\
Nuclear                    & 70                                    & 0.0                               & 0.0                                 &                      & 0.0                               & 0.0                                 \\
Nuclear                    & 80                                    & 0.0                               & 994.0                               &                      & 0.0                               & 691.2                               \\
Nuclear                    & 90                                    & 0.0                               & 0.0                                 &                      & 0.0                               & 0.0                                 \\
Nuclear                    & 100                                   & -443.3                            & 0.0                                 &                      & -25972.7                          & 31813.7                             \\
Nuclear                    & 110                                   & 0.0                               & 0.0                                 &                      & -0.1                              & 0.0                                 \\
Nuclear                    & 120                                   & 0.0                               & 884096.4                            &                      & 0.0                               & 2137730.4                           \\
RFO                        & 60                                    & 0.0                               & 0.0                                 &                      & 0.0                               & 0.0                                 \\
RFO                        & 70                                    & 0.0                               & 0.0                                 &                      & 0.0                               & 0.0                                 \\
RFO                        & 80                                    & 0.0                               & 0.0                                 &                      & 0.0                               & 0.0                                 \\
RFO                        & 90                                    & 0.0                               & 0.0                                 &                      & 0.0                               & 0.0                                 \\
RFO                        & 100                                   & 0.0                               & 0.0                                 &                      & 0.0                               & 0.0                                 \\
RFO                        & 110                                   & 0.0                               & 0.0                                 &                      & 0.0                               & 0.0                                 \\
RFO                        & 120                                   & 0.0                               & 731223.9                            &                      & 0.0                               & 936940.8                            \\
Hydro                      & 60                                    & 0.0                               & 1411.6                              &                      & $0.0^*$       & 0.0                                 \\
Hydro                      & 70                                    & 0.0                               & 591.2                               &                      & 0.0                               & 181.4                               \\
Hydro                      & 80                                    & 0.0                               & 543.7                               &                      & 0.0                               & 5106.5                              \\
Hydro                      & 90                                    & 0.0                               & 649.8                               &                      & 0.0                               & 2225.0                              \\
Hydro                      & 100                                   & 0.0                               & 0.0                                 &                      & 0.0                               & 10137.5                             \\
Hydro                      & 110                                   & 0.0                               & 0.0                                 &                      & 0.0                               & 259301.9                            \\
Hydro                      & 120                                   & 0.0                               & 0.0                                 &                      & 0.0                               & 164498.9                            \\
Wood                       & 60                                    & 0.0                               & 0.0                                 &                      & 0.0                               & 0.0                                 \\
Wood                       & 70                                    & 0.0                               & 0.0                                 &                      & 0.0                               & 0.0                                 \\
Wood                       & 80                                    & 0.0                               & 0.0                                 &                      & 0.0                               & 0.0                                 \\
Wood                       & 90                                    & 0.0                               & 0.0                                 &                      & 0.0                               & 0.0                                 \\
Wood                       & 100                                   & 0.0                               & 0.0                                 &                      & 0.0                               & 868.6                               \\
Wood                       & 110                                   & 0.0                               & 0.0                                 &                      & 0.0                               & 987.6                               \\
Wood                       & 120                                   & 0.0                               & 0.0                                 &                      & 0.0                               & 1240.4                              \\ \hline
\end{tabular}}
\end{table}

\begin{table}[htbp]
\centering
\caption{Computation time (seconds) of high congestion noCM instances without and with the valid inequality ({\bf bold numbers}: improved run time with valid inequality; ``tl.": hits the 20-minute time limit)}
\label{tab: ineq_time}
{\small
\begin{tabular}{lrrr}
\hline
\multicolumn{1}{c}{Leader} & \multicolumn{1}{c}{Demand (\%)} & \multicolumn{1}{c}{No ineq.} & \multicolumn{1}{c}{With ineq.} \\ \hline
NG                         & 60                              & 4.5                          & 5.9                            \\
NG                         & 70                              & 4.3                          & 4.6                            \\
NG                         & 80                              & 4.3                          & 5.0                            \\
NG                         & 90                              & 4.2                          & 5.5                            \\
NG                         & 100                             & 5.4                          & 7.2                            \\
NG                         & 110                             & 6.5                          & 11.6                           \\
NG                         & 120                             & 7.5                          & 9.1                            \\
Coal                       & 60                              & 15.0                         & 17.5                           \\
Coal                       & 70                              & 6.7                          & 8.6                            \\
Coal                       & 80                              & 5.9                          & \textbf{5.8}                   \\
Coal                       & 90                              & 5.3                          & 6.1                            \\
Coal                       & 100                             & 14.3                         & 17.3                           \\
Coal                       & 110                             & 11.5                         & 78.7                           \\
Coal                       & 120                             & 9.7                          & 11.8                           \\
Nuclear                    & 60                              & 18.6                         & 19.1                           \\
Nuclear                    & 70                              & 10.2                         & 13.5                           \\
Nuclear                    & 80                              & 7.7                          & 10.4                           \\
Nuclear                    & 90                              & 6.2                          & 7.6                            \\
Nuclear                    & 100                             & 17.0                         & 27.4                           \\
Nuclear                    & 110                             & 10.2                         & 18.2                           \\
Nuclear                    & 120                             & 20.7                         & \textbf{18.1}                  \\
RFO                        & 60                              & 3.8                          & 8.0                            \\
RFO                        & 70                              & 5.0                          & \textbf{4.3}                   \\
RFO                        & 80                              & 5.3                          & \textbf{4.8}                   \\
RFO                        & 90                              & 5.3                          & 5.3                  \\
RFO                        & 100                             & 5.2                          & \textbf{5.1}                   \\
RFO                        & 110                             & 4.6                          & 4.8                            \\
RFO                        & 120                             & 5.1                          & \textbf{5.0}                   \\
Hydro                      & 60                              & tl.                     & tl.                       \\
Hydro                      & 70                              & tl.                     & tl.                       \\
Hydro                      & 80                              & tl.                     & \textbf{44.6}                  \\
Hydro                      & 90                              & tl.                     & tl.                       \\
Hydro                      & 100                             & 13.9                         & tl.                       \\
Hydro                      & 110                             & 5.2                          & 6.8                            \\
Hydro                      & 120                             & 5.2                          & 6.7                            \\
Wood                       & 60                              & 4.4                          & 7.8                            \\
Wood                       & 70                              & 5.5                          & 7.3                            \\
Wood                       & 80                              & 4.5                          & 6.9                            \\
Wood                       & 90                              & 4.4                          & 5.5                            \\
Wood                       & 100                             & 4.7                          & 6.1                            \\
Wood                       & 110                             & 6.4                          & \textbf{6.0}                   \\
Wood                       & 120                             & 5.1                          & 5.6                            \\ \hline
\end{tabular}}
\end{table}

%
%
%






\end{document}